\newtheorem{theorem}{Theorem}[section]
\newtheorem{lemma}[theorem]{Lemma}
\newtheorem{proposition}[theorem]{Proposition}
\newtheorem{definition}[theorem]{Definition}
\newtheorem{remark}[theorem]{Remark}
\newtheorem{question}[theorem]{Question}
\begin{document}

\title[Second main theorem with moving hypersurfaces]{Second main theorem for holomorphic curves into algebraic varieties intersecting moving hypersurfaces targets}

\author[L. B. Xie]{Libing Xie}
\address[Libing Xie]{Department of Mathematics, Nanchang University, Jiangxi 330031, P. R. China}
\email{xielibing123@126.com}

\author[T. B. Cao]{Tingbin Cao}
\address[Tingbin Cao]{Department of Mathematics, Nanchang University, Jiangxi 330031, P. R. China}
\email{tbcao@ncu.edu.cn (the corresponding author)}

\thanks{This paper was supported by the National Natural Science Foundation of China (\#11871260, \#11461042), the outstanding young talent assistance program of Jiangxi Province (\#20171BCB23002) in China.}
\date{}

\subjclass[2010]{30D35; 32H30}

\keywords{Algebraic varieties; Holomorphic curves; Nevanlinna theory; Moving hypersurfaces}

\begin{abstract}Since the great work on holomorphic curves into algebraic varieties intersecting hypersurfaces in general position established by Ru in 2009, recently there has been some developments on the second main theorem into algebraic varieties intersecting moving hypersurfaces targets. The main purpose of this paper is to give some interesting improvements of Ru's second main theorem for moving hypersurfaces targets located in subgeneral position with index.
\end{abstract}

\maketitle

\section{Introduction and main results}
It is well-known that in 1933, H. Cartan established Nevanlinna theory for meromorphic functions to the case of linearly nondegenerate holomorphic curves into complex projective spaces intersecting hyperplanes in general position, and conjectured that it is still true for moving hyperplanes targets. From then on, higher dimensional Nevanlinna theory has been studied very hot (refer to \cite{Ru3, V, noguchi-winkelmann}). In 2009, Ru \cite{Ru4} proposed a great work on second main theorem of algebraically nondegenerate holomorphic curves into smooth complex varieties intersecting hypersurfaces in general position, which is a generalization of the Cartan's second main theorem and his own former result \cite{Ru1} completely solving the Shiffman's conjecture\cite{S} under the motivation of Corvaja-Zannier \cite{So} in Diophantine approximation.\par

Thus, it is natural and interesting to investigate the Ru's second main theorem into complex projective spaces and even into complex algebraic varieties for the moving hypersurfaces targets. Based on their affirmation of the Shiffman's conjecture for moving hypersurfaces targets \cite{D}, recently, Dethloff and Tan \cite{C} continue to prove successfully the following theorem. In the special case where the coefficients of the polynomials $Q_j$'s are constant and the variety $V$ is smooth, this is the Ru's second main theorem \cite{Ru4}.\par

\begin{theorem} \cite{C} \label{TA} Let $V\subset \mathbb{P}^{n}(\mathbb{C})$ be an irreducible (possibly singular) variety of dimension $\ell,$ and let $f$ be a nonconstant holomorphic map of $\mathbb{C}$ into $V.$  and let $\mathcal{D}=\{D_{1},\ldots,D_{q}\}$ be a family of slowly moving hypersurfaces (with respect to $f$) in general position, and let $\mathcal{Q}=\{Q_{1}, \ldots, Q_{q}\}$ be the set of the set of the defining homogeneous polynomials of $\mathcal{D}$ with $\deg Q_{j}=d_{j},(j=1,...,q)$ and $Q_{j}(f)\not\equiv 0$ for $j=1, \ldots, q.$ Assume that $f$ is algebraically nondegenerate over $\mathcal{K}_{\mathcal{Q}}.$ Then, for any $\epsilon >0$,
\begin{eqnarray}\label{E1.1}
\sum^{q}_{j=1}(1/d_{j})m_{f}(r, D_{j})\leq(\ell+1+\epsilon)T_{f}(r)
\end{eqnarray}  holds for all $r$ outside a set with finite Lebesgue measure.\end{theorem}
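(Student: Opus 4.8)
The plan is to carry out Ru's strategy \cite{Ru4} (constant coefficients, smooth $V$) over the field $\mathcal{K}_\mathcal{Q}$ of slowly moving functions, and to close with a second main theorem for linearly nondegenerate holomorphic curves against slowly moving hyperplane targets in the pointwise-maximum form that needs no general-position assumption on the hyperplanes — a statement within reach of the moving-target circle of ideas, cf. \cite{D}. First I would reduce to equal degrees: with $d$ a common multiple of $d_1,\dots,d_q$, replacing each $Q_j$ by $Q_j^{\,d/d_j}$ leaves the supports of the $D_j$ (hence the general-position hypothesis), the field $\mathcal{K}_\mathcal{Q}$, and $T_f(r)$ unchanged, while rewriting the left side of \eqref{E1.1} as $\sum_j\frac1d\, m_f(r,D_j')$ with $\deg Q_j'=d$; so from now on all $\deg Q_j=d$. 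Writing $\lambda^f_Q:=\log\bigl(\|f\|^{\deg Q}\|Q\|/|Q(f)|\bigr)$, one has $m_f(r,D_j)=\int_0^{2\pi}\lambda^f_{Q_j}(re^{i\theta})\frac{d\theta}{2\pi}+O(1)$.

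Next, fix a large integer $m$ divisible by $d$, let $W_m$ be the degree-$m$ part of the homogeneous coordinate ring of $V$, of dimension $H_V(m)=\frac{\deg V}{\ell!}m^\ell+O(m^{\ell-1})$, and pick a $\mathbb{C}$-basis $\psi_1,\dots,\psi_{H_V(m)}$ of $W_m$. This yields a holomorphic curve
\[
\mathbf{F}=\bigl(\psi_1(f):\dots:\psi_{H_V(m)}(f)\bigr)\colon\mathbb{C}\To\mathbb{P}^{H_V(m)-1},
\]
with $T_{\mathbf{F}}(r)=m\,T_f(r)+O(1)$, so that a function is slowly moving with respect to $f$ exactly when it is slowly moving with respect to $\mathbf{F}$; moreover $\mathbf{F}$ is linearly nondegenerate over $\mathcal{K}_\mathcal{Q}$, since a nontrivial relation $\sum_s c_s\psi_s(f)\equiv0$ with $c_s\in\mathcal{K}_\mathcal{Q}$ would put the image of $f$ into the degree-$m$ hypersurface $\{\sum_s c_s\psi_s=0\}$ over $\mathcal{K}_\mathcal{Q}$, against the standing hypothesis.

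The heart is pointwise on the circle $\{|z|=r\}$. For each $z$ let $I(z)=\{i_0,\dots,i_\ell\}$ index the $\ell+1$ smallest among $|Q_1(f)(z)|,\dots,|Q_q(f)(z)|$. General position gives $V\cap\{Q_{i_0}=\dots=Q_{i_\ell}=0\}=\emptyset$, whence Hilbert's Nullstellensatz over $\mathcal{K}_\mathcal{Q}$ yields $\max_k|Q_{i_k}(f)(z)|\ge c(z)\|f(z)\|^d$ with $\log^+|1/c|=o(T_f(r))$, so the indices outside $I(z)$ contribute only $O(1)$ to $\sum_j\lambda^f_{Q_j}(z)$, leaving the sum over $j\in I(z)$. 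For that sum I would build, over $\mathcal{K}_\mathcal{Q}$, Ru's filtration of $W_m\otimes_{\mathbb{C}}\mathcal{K}_\mathcal{Q}$ by the subspaces $Q_{i_0}^{a_0}\cdots Q_{i_\ell}^{a_\ell}\cdot W_{m-d(a_0+\dots+a_\ell)}$, take a $\mathcal{K}_\mathcal{Q}$-basis $B_{I(z)}$ adapted to it (each member divisible by an explicit monomial in the $Q_{i_k}$), and invoke the Evertse--Ferretti estimates relating the Hilbert weight of $V$, its Chow weight, its degree and its Hilbert function — the Chow weight being bounded below by $\deg V$ times $\sum_k\lambda^f_{Q_{i_k}}(z)$ — to obtain
\[
\sum_{j\in I(z)}\lambda^f_{Q_j}(z)\ \le\ \frac{(\ell+1)d}{m\,H_V(m)}\bigl(1+O(1/m)\bigr)\sum_{\phi\in B_{I(z)}}\lambda^{\mathbf{F}}_{\phi}(z)\ +\ O(1),
\]
where $\lambda^{\mathbf{F}}_\phi=\log(\|\mathbf{F}\|\|\phi\|/|\phi(f)|)$, $\phi(f)$ is the coordinate of $\mathbf{F}$ along $\phi$, and the $\|\phi\|$ — polynomial in the slowly moving coefficients of the $Q_{i_k}$ — are slowly moving with respect to $\mathbf{F}$ (the $O(1/m)$ slack and a further absorbable $o(1)$-fraction of the left side being the price of admitting a general, possibly singular, $V$).

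Finally, let $\{H_s\}$ be the finite family of hyperplanes of $\mathbb{P}^{H_V(m)-1}$ that occur as coordinate hyperplanes of the various bases $B_I$, $I$ ranging over $(\ell+1)$-subsets with $V\cap\bigcap_{i\in I}Q_i=\emptyset$; these are slowly moving with respect to $\mathbf{F}$. Since $B_{I(z)}$ is a linearly independent system, the right side above is at most $\frac{(\ell+1)d}{mH_V(m)}\bigl(1+O(1/m)\bigr)\max_J\sum_{s\in J}\lambda^{\mathbf{F}}_{H_s}(z)+O(1)$, the maximum over linearly independent $J\subset\{H_s\}$; integrating over the circle and applying the pointwise-maximum second main theorem to the linearly nondegenerate $\mathbf{F}$ against $\{H_s\}$ gives $\sum_{j}\frac1d m_f(r,D_j)\le\frac{(\ell+1)d}{mH_V(m)}\bigl(1+O(1/m)\bigr)\bigl(H_V(m)+\epsilon'\bigr)T_{\mathbf{F}}(r)+o(T_f(r))$ outside a set of finite Lebesgue measure; since $T_{\mathbf{F}}(r)=m\,T_f(r)+O(1)$, letting $m\to\infty$ collapses the constant to $\ell+1+\epsilon$, which is \eqref{E1.1}. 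I anticipate that the main obstacle will be the moving-coefficient bookkeeping rather than any new geometric input: the Nullstellensatz step, the filtration, and the adapted bases must all be produced over $\mathcal{K}_\mathcal{Q}$ (or a finite algebraic extension, descended at the end), one must verify that $\mathbf{F}$ stays linearly nondegenerate over this field, and one must check that every auxiliary function generated along the way — the factor $c$, the norms $\|\phi\|$, the coefficients of the $H_s$ — has characteristic function $o(T_f(r))=o(T_{\mathbf{F}}(r))$, so that it is genuinely slowly moving with respect to $\mathbf{F}$ and can be absorbed into the error term of the hyperplane second main theorem.
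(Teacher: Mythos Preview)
Your proposal is correct and follows the same architecture the paper uses to recover Theorem~\ref{TA} (as the special case $N=\ell$, $\kappa=1$ of Theorem~\ref{T1.1}, proved in Section~3): reduce to a common degree, isolate at each $z$ the few smallest $|Q_j(f)(z)|$ via a Nullstellensatz-type bound (Lemma~\ref{L2.1}), pass to a degree-$L$ Veronese-type curve $F$ that is linearly nondegenerate over $\mathcal{K}_\mathcal{Q}$, use a filtration of the degree-$L$ piece of the homogeneous coordinate ring to convert the Weil functions into a product of moving linear forms, and close with a pointwise-maximum second main theorem for moving hyperplanes (the paper's Theorem~\ref{TB}).

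The one substantive difference is in the filtration step. You retain all $\ell+1$ smallest $Q_{i_k}$ and appeal to Ru's 2009 Evertse--Ferretti/Chow-weight machinery to obtain the Hilbert-weight inequality; the paper instead drops down to the $\ell$ smallest (the $(\ell+1)$-st already being controlled by Lemma~\ref{L2.1}) and runs the explicit Corvaja--Zannier $\ell$-index filtration of $V_L$ over $\mathcal{K}_\mathcal{Q}$, extracting the key combinatorial estimate~\eqref{E1.10} by direct dimension-counting (Lemmas~\ref{L2.9.}--\ref{L2.17.}) rather than via a Chow-form bound. Your route is more conceptual but obliges you to carry the Hilbert-weight/Chow-weight inequalities over $\mathcal{K}_\mathcal{Q}$, or pointwise with constants uniform in $z$; the paper's route is more hands-on but sidesteps that issue entirely, since the graded-piece dimensions $\Delta^{\mathbf{i}}_L$ are computed purely field-theoretically over $\mathcal{K}_\mathcal{Q}$ and no Chow form ever appears.
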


For the special case $V=\mathbb{P}^{n}(\mathbb{C}),$  S. D. Quang \cite{Q1} recently gave a second main theorem with truncated counting functions for meromorphic mappings into $\mathbb{P}^{n}(\mathbb{C})$ intersecting a family of moving hypersurfaces in subgeneral position , which can be possibly good at the uniqueness problems of meromorphic mappings.\par

\begin{theorem}\cite{Q1}\label{TC}
Let $f$ be a nonconstant meromorphic map of $\mathbb{C}^{m}$ into $\mathbb{P}^{n}(\mathbb{C}).$ Let ${\{Q_{i}\}_{i=1}^{q}}$ be a collection of slowly moving hypersurfaces in $N$-subgeneral position with $\deg Q_{j}=d_{j}(1\leq j\leq q).$ Assume that $f$ is algebraically nondegenerate over $\mathcal{K}_{\mathcal{Q}}.$  Then, for any $\epsilon >0,$
\begin{eqnarray}\label{E1.2}
(q-(N-n+1)(n+1)-\epsilon)T_{f}(r)\leq\sum^{q}_{i=1}\frac{1}{d_{j}}N^{[L_0]}(r,f^{*}Q_i)+o(T_{f}(r))
\end{eqnarray}holds for all $r$ outside a set with finite Lebesgue measure, where
$$L_{0}:={L+n \choose n}p_{0}^{{L+n \choose n}{L+n \choose n}-1){q \choose n}-2}-1,$$
with $$L:=(n +1)d +2(N-n+1)(n +1)^{3}I(\epsilon^{-1})d,$$ $d := lcm(d_{1},\ldots, d_{q})$ is the least common multiple of all $\{d_{i}\},$
and $$p_{0}:=\left[\frac{{L+n \choose n}({L+n \choose n}-1){q \choose n}-1}{\log(1+\frac{\epsilon}{3(n+1)(N-n+1)})}\right]^{2}.$$
\end{theorem}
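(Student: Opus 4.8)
\emph{Proof sketch.} The plan is to follow the now-standard combination of Ru's filtration method, the moving-targets machinery, and a subgeneral-position reduction, while tracking the truncation level at every step. First I would pass to a common degree: replacing each $Q_i$ by $Q_i^{d/d_i}$, with $d=\mathrm{lcm}(d_1,\dots,d_q)$, preserves the $N$-subgeneral position and only rescales the counting functions, so one may assume $\deg Q_i=d$ for all $i$. Next, via the $d$-uple Veronese embedding $v_d\colon \mathbb{P}^n(\mathbb{C})\to\mathbb{P}^{M-1}(\mathbb{C})$ with $M={n+d \choose n}$, the map $f$ is carried to $F:=v_d\circ f$, taking values in the Veronese variety and sending each $Q_i$ to a slowly moving hyperplane; algebraic nondegeneracy of $f$ over $\mathcal{K}_{\mathcal{Q}}$ becomes nondegeneracy of $F$ relative to the Veronese variety over the corresponding function field. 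This reduces matters to a truncated second main theorem for a curve into a projective subvariety intersecting slowly moving hyperplanes in subgeneral position.

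\emph{From subgeneral to general position.} Using a replacing/distribution lemma in the spirit of Quang — at a generic point one selects, from among any $N+1$ of the divisors, a subfamily in general position with respect to the variety and forms suitable $\mathcal{K}_{\mathcal{Q}}$-linear combinations — one obtains a new family of slowly moving hypersurfaces in general position whose degrees are bounded by $L=(n+1)d+2(N-n+1)(n+1)^3 I(\epsilon^{-1})d$. The factor $(N-n+1)(n+1)$ appearing in \eqref{E1.2} is precisely the cost of this step, exactly as Nochka weights produce it in the linear case. One must then verify, using the slow-moving hypothesis together with the general-position lemma for moving hypersurfaces of Dethloff--Tan, that the proximity and truncated counting functions transform as required, up to an $o(T_f(r))$ error.

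\emph{The general-position case and the origin of $L_0$.} With the divisors in general position, fix a large auxiliary degree and, for each hypersurface, form the filtration of the space of global sections of the relevant line bundle by order of vanishing along that hypersurface; nondegeneracy of $F$ forces a generalized Wronskian $W$ of an adapted basis to be not identically zero, the moving-targets logarithmic derivative lemma bounds its characteristic function by $o(T_f(r))$, and the Hilbert-function estimates of Evertse--Ferretti and Ru furnish the main term $(N-n+1)(n+1)\,T_f(r)$. The truncation level $L_0$ — the iterated binomial in the statement — arises from bounding the order of vanishing of $W$ at a zero of $Q_i(f)$ by the dimension of the auxiliary linear system that supports it; the nested binomials and the factors $p_0$, ${q \choose n}$ reflect the successive degree blow-ups (from $d$, to $L$, to the degree carrying $W$) together with the number ${q \choose n}$ of $n$-element subfamilies over which one must run the argument.

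\emph{Main obstacle.} The delicate point is the simultaneous control of two competing blow-ups: the degree growth forced by the subgeneral-position replacement and by the auxiliary polynomials chosen to keep the relevant Wronskians nondegenerate, against the requirement that the truncation level stay finite and the moving-target error terms stay genuinely $o(T_f(r))$. Balancing these — showing that every generic choice of coefficients can be made uniformly in $r$ and that the counting-function truncation survives each reduction — is exactly what forces the explicit (and very large) constants $L$, $p_0$ and $L_0$, and is the technical heart of the proof.
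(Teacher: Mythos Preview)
Your outline diverges from the paper's argument (the proof of Theorem~\ref{T1.2} in Section~4, which specializes to Theorem~\ref{TC} at $\kappa=1$ and follows Quang's method) at two essential points, and the second of these is a genuine gap.

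\medskip
\textbf{No Veronese embedding.} The paper never composes with a $d$-uple Veronese map. After replacing $Q_i$ by $Q_i^{d/d_i}$ it works directly with the spaces $V_L=\mathcal{K}_{\mathcal{Q}}[x_0,\dots,x_n]_L$ of homogeneous polynomials of a large auxiliary degree $L$ and with the Corvaja--Zannier--Ru filtration $W_{L,\mathbf{i}}$ attached to the $n$ auxiliary polynomials $P_{I1},\dots,P_{In}$ produced by the subgeneral-to-general replacement lemma. The constants $u=\binom{L+n}{n}$ and $a=d^n\binom{L/d}{n+1}$ arise from elementary counts in this filtration, not from any Hilbert-function asymptotics on a Veronese variety. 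Your Veronese reduction is not wrong in spirit, but it would not by itself reproduce the specific constants in the statement, and it obscures where the truncation level really comes from.

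\medskip
\textbf{The missing device: the $\Phi(p)$ pigeonhole.} This is the heart of the moving-targets truncation, and it is absent from your sketch. After constructing the bases $\{\psi_l^I\}$ of $V_L$ one extracts the finite set $\Phi=\{\mu^I_{lJ}\}\subset\mathcal{K}_{\mathcal{Q}}$ of normalized coefficients, with $B:=\sharp\Phi\le \binom{q}{n}\binom{L+n}{n}\bigl(\binom{L+n}{n}-1\bigr)$. For each $p$ one forms the $\mathbb{C}$-linear span $\mathcal{L}(\Phi(p))$ of all $p$-fold products from $\Phi$; a pigeonhole argument on the growth of $\dim\mathcal{L}(\Phi(p))$ produces some $p\le p_0$ with
\[
\frac{\dim\mathcal{L}(\Phi(p+1))}{\dim\mathcal{L}(\Phi(p))}\le 1+\frac{\epsilon}{3(n+1)(N-n+1)}.
\]
Fixing bases $b_1,\dots,b_s$ of $\mathcal{L}(\Phi(p))$ and $b_1,\dots,b_t$ of $\mathcal{L}(\Phi(p+1))$, one then defines the auxiliary map $F=(hb_i\phi_J(\tilde f))$ into $\mathbb{P}^{tu-1}(\mathbb{C})$, which is linearly nondegenerate over $\mathbb{C}$ (not merely over $\mathcal{K}_{\mathcal{Q}}$), so that the general-form second main theorem (Theorem~\ref{TB}) and the Wronskian $W^\alpha$ apply with truncation at level $tu-1$. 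The constant $p_0$ in the statement is exactly the bound guaranteeing such a $p$ exists, and $L_0\ge tu-1$ is obtained from $t\le\binom{B+p}{B-1}$. Your description of $p_0$ and $L_0$ as coming from ``successive degree blow-ups'' is incorrect: they come from this dimension-ratio argument, which is precisely what converts moving coefficients into a problem over $\mathbb{C}$ while keeping the truncation finite. Without this step your sketch does not explain how to obtain any explicit truncation at all in the moving-targets setting.
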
\par

In this paper, we mainly combine their methods \cite{C,Q1, Y} together and adopt the new concept of the index of subgeneral position due to Ji-Yan-Yu \cite{J2} to obtain some interesting developments of Ru's second main theorem for moving hypersurfaces targets, which are improvements and extensions of Theorem \ref{TA} and Theorem \ref{TC}.\par

According to \cite{J2}, we can give a similar definition for moving hypersurfaces located in $m$-subgeneral position with index $k.$\par

\begin{definition}\label{D1.1} Let $V$ be an algebraic subvariety of $\mathbb{P}^{n}(\mathbb{C}).$  Let $\{D_{1},\ldots,D_{q}\}$ be a family of moving hypersurfaces of $\mathbb{P}^{n}(\mathbb{C}).$ Let $N$ and $\kappa$ be two positive integers such that $N\geq\dim V\geq \kappa.$\par

(a). The hypersurfaces $\{D_{1},\ldots,D_{q}\}$  are said to be in general position (or say in weakly general position) in $V$ if there exists $z\in\mathbb{C}^{m}$ (if this condition is satisfied for one $z\in\mathbb{C}^{m}$, it is also satisfied for all $z$ except for a discrete set and the analytic set $I(f)=\{f_{0}=\cdots=f_{n}=0\}$ of codimension $\geq 2$) for any subset $I\subset\{1,\ldots,q\}$ with $\sharp I\leq \dim V+1,$
$$codim\left(\bigcap_{i\in I}D_{i}(z)\cap V\right)\geq\sharp I.$$\par

(b). The hypersurfaces $\{D_{1},\ldots,D_{q}\}$  are said to be in $N$-subgeneral position in $V$ if there exists $z\in\mathbb{C}^{m}$ (if this condition is satisfied for one $z\in\mathbb{C}^{m}$, it is also satisfied for all $z$ except for a discrete set and the analytic set $I(f)=\{f_{0}=\cdots=f_{n}=0\}$ of codimension $\geq 2$) for any subset $I\subset\{1,\ldots,q\}$ with $\sharp I\leq N+1,$
$$\dim\left(\bigcap_{i\in I}D_{i}(z)\cap V\right)\leq N-\sharp I.$$\par

(c). The hypersurfaces $\{D_{1},\ldots,D_{q}\}$  are said to be in $N$-general position with index $\kappa$ in $V$ if $D_{1},\ldots,D_{q}$ are in $N$-subgeneral position and if there exists $z\in\mathbb{C}^{m}$ (if this condition is satisfied for one $z\in\mathbb{C}^{m}$, it is also satisfied for all $z$ except for a discrete set and the analytic set $I(f)=\{f_{0}=\cdots=f_{n}=0\}$ of codimension $\geq 2$) for any subset $I\subset\{1,\ldots,q\}$ with $\sharp I\leq \kappa,$
$$codim\left(\bigcap_{i\in I}D_{i}(z)\cap V\right)\geq\sharp I.$$
(Here we set $\dim \emptyset=-\infty$.)
\end{definition}

Now, we state our main result which are an improvement and extension of the above two theorems concerning moving hypersurfaces targets located in subgeneral position with index. Theorem \ref{TA} is just the following result for the special case whenever $N=\dim V$ and $\kappa=1.$ \par

\begin{theorem}\label{T1.1}
Let $f:\mathbb{C}^{m}\rightarrow V\subset\mathbb{P}^{n}(\mathbb{C})$ be a nonconstant meromorphic map, where $V$ is an irreducible algebraic subvariety of dimension $\ell.$ Let $\mathcal{Q}=\{Q_{1},\ldots,Q_{q}\}$ be a collection of slowly moving hypersurfaces in $N$-subgeneral position with index $\kappa$ in $V,$ and $\deg Q_{j}=d_{j}(j=1, \ldots, q,).$ Assume that $f:\mathbb{C}^{m}\rightarrow V$ is algebraically nondegenerate over $\mathcal{K}_{\mathcal{Q}}.$ Then, for any $\epsilon >0,$
\begin{eqnarray}\label{E1.13}
&&\left(q-(1+\frac{N-\ell}{\max\{1,\min\{N-\ell,\kappa\}\}})(\ell+1)-\epsilon\right)T_{f}(r)\\\nonumber&\leq&\sum^{q}_{i=1}\frac{1}{d_{j}}N(r,f^{*}Q_i)+o(T_{f}(r)),
\end{eqnarray}
holds for all $r$ outside a set with finite Lebesgue measure
\end{theorem}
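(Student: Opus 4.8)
The plan is to reduce the moving-target statement to a statement about a fixed (constant-coefficient) target over a larger field, following the by-now standard strategy of Dethloff–Tan \cite{C} and Quang \cite{Q1}, and then to run Ru's filtration/Chebyshev-weight argument while exploiting the index $\kappa$ exactly as Ji–Yan–Yu \cite{J2} do for hyperplanes. First I would recall the field $\mathcal{K}_{\mathcal{Q}}$ generated by the coefficient functions of the $Q_j$'s, together with the fact that, since the $Q_j$ are slowly moving with respect to $f$, the characteristic functions of these coefficient functions are $o(T_f(r))$; this lets me treat the $Q_j$ as if they had constant coefficients at the cost of the error term $o(T_f(r))$ in \eqref{E1.13}. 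Then I would pass to a suitable Veronese-type embedding so that each $Q_j$ becomes (essentially) a hyperplane section of the image variety, replacing $f$ by a composite map $F$ which remains algebraically nondegenerate over $\mathcal{K}_{\mathcal{Q}}$, with $T_F(r)$ comparable to $d\,T_f(r)$ where $d$ is a common multiple of the $d_j$; the $N$-subgeneral-position-with-index-$\kappa$ hypothesis on the $Q_j$ transfers to the corresponding property for the hyperplanes in the new projective space restricted to the image of $V$.

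Next I would build, for each point $z$ (outside the exceptional discrete set) and for the index sets of size $\le N+1$, the combinatorial selection of a subset of the $Q_j$'s that controls the proximity function. The key quantitative input is the following: because of the index $\kappa$, any $\min\{N-\ell,\kappa\}$ of the hypersurfaces passing through a point meet properly in $V$, so the usual "$N-\ell+1$ bad ones per point" bookkeeping of subgeneral position improves to a factor $1+\frac{N-\ell}{\max\{1,\min\{N-\ell,\kappa\}\}}$ in front of $(\ell+1)$, rather than the cruder $\frac{N-\ell+1}{1}$. Concretely, I would establish a local estimate of the form
\begin{eqnarray*}
\sum_{j=1}^{q}\frac{1}{d_j}\,\lambda_{Q_j}(F(z))\;\le\;\Big(1+\tfrac{N-\ell}{\max\{1,\min\{N-\ell,\kappa\}\}}\Big)\max_{J}\sum_{j\in J}\frac{1}{d_j}\,\lambda_{Q_j}(F(z))+O(1),
\end{eqnarray*}
where $J$ ranges over subsets of size $\ell+1$ in "general position at $z$", by a careful grouping argument on the hypersurfaces through $z$: partition them according to which proper-intersection flag they extend, count how many groups there are, and bound the number of hypersurfaces in each group using the index. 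This is the place where Definition \ref{D1.1}(c) does the real work, and it is essentially the Ji–Yan–Yu lemma adapted to hypersurfaces.

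Having localized to at most $\ell+1$ hypersurfaces in general position, I would invoke Ru's second main theorem machinery for a fixed smooth model — the filtration of the homogeneous coordinate ring of $V$ associated to the chosen hypersurfaces, the Hilbert-polynomial/Chebyshev-weight count, and the general-position logarithmic derivative / product-to-sum estimate — exactly as used to prove Theorem \ref{TA}; here the algebraic nondegeneracy of $F$ over $\mathcal{K}_{\mathcal{Q}}$ guarantees the relevant Wronskian-type terms are nonvanishing and that the extra coefficient-field contributions are absorbed into $o(T_f(r))$. Integrating the local estimate over $|z|=r$, combining with the fixed-target SMT, and translating back through the Veronese embedding (dividing by $d$) yields \eqref{E1.13}. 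The main obstacle I anticipate is the local combinatorial estimate in the previous paragraph: one must handle the interaction between the moving nature of the targets and the index condition uniformly in $z$ — in particular, making sure the proper-intersection flags exist for all $z$ off a fixed discrete set (which is where the parenthetical remarks in Definition \ref{D1.1} are invoked) and that the grouping bound is genuinely $\max\{1,\min\{N-\ell,\kappa\}\}$ and not something weaker when $\kappa>N-\ell$. Everything downstream of that estimate is a routine adaptation of \cite{C,Q1,Ru4}.
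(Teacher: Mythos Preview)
Your overall plan is sound and tracks the paper's proof closely: normalize to common degree, pass to Weil functions, obtain the local inequality with the factor $1+\frac{N-\ell}{\max\{1,\min\{N-\ell,\kappa\}\}}$, then feed $\ell+1$ hypersurfaces in general position into the Corvaja--Zannier/Ru filtration of $V_L$ over $\mathcal{K}_{\mathcal{Q}}$ and finish with the general-form second main theorem (Theorem~\ref{TB}). The downstream part of your sketch is essentially what the paper does.

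The one substantive divergence is how the local combinatorial factor is produced. You propose a partitioning of the hypersurfaces through $z$ ``according to which proper-intersection flag they extend'' and a count of groups, in the spirit of a direct Ji--Yan--Yu grouping. The paper instead proves a concrete replacement lemma (Lemma~\ref{L2.4}, Quang's method adapted to index $\kappa$): from $\tilde Q_{1},\ldots,\tilde Q_{N+1}$ one \emph{constructs} polynomials $\tilde P_{1}=\tilde Q_{1},\ldots,\tilde P_{\kappa}=\tilde Q_{\kappa}$ and, for $t\geq\kappa+1$, $\tilde P_{t}=\sum_{j=\kappa+1}^{N-\ell+t} c_{tj}\tilde Q_{j}$ with $\bigl(\bigcap_{t=1}^{\ell+1}\tilde P_t\bigr)\cap V=\emptyset$. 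Having these explicit $\tilde P_t$ in hand, the paper splits $\sum_{j=1}^{N}\lambda_{\tilde Q_j}$ into the three ranges $1\leq j\leq\kappa$, $\kappa+1\leq j\leq N-\ell+\kappa$, $N-\ell+\kappa+1\leq j\leq N$, replaces the last block by $\sum_{j=\kappa+1}^{\ell}\lambda_{\tilde P_j}$ via the construction, and then handles the middle block of $N-\ell$ terms by comparing it to $\sum_{j=1}^{\kappa}\lambda_{\tilde P_j}$ in the two cases $N-\ell\leq\kappa$ and $N-\ell\geq\kappa$; this is exactly where the $\max\{1,\min\{N-\ell,\kappa\}\}$ appears. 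The payoff of the paper's route is that the same $\tilde P_1,\ldots,\tilde P_{\ell}$ used to get the local factor are precisely the polynomials used to build the filtration $\{W_{\mathbf i}^{*}\}$ of $V_L$, so the interface with the Ru machinery is immediate. Your flag-partition idea would still need, at the end, to produce $\ell+1$ hypersurfaces in general position on $V$ to run the filtration, so in practice you would likely end up proving something equivalent to Lemma~\ref{L2.4} anyway; if you make that step explicit, your proposal and the paper's proof coincide.
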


When $V=P^{n}(\mathbb{C}),$ we can have the following second main theorem with truncation, and thus Theorem \ref{TC} is just the special case whenever $\kappa=1.$\par

\begin{theorem}\label{T1.2}
Let $f$ be a nonconstant meromorphic map of $\mathbb{C}^{m}$ into $\mathbb{P}^{n}(\mathbb{C}).$ Let ${\{Q_{i}\}_{i=1}^{q}}$ be a collection of slowly moving hypersurfaces in $N$-subgeneral position with index $\kappa,$ and $\deg Q_{j}=d_{j}(1\leq j\leq q).$ Assume that $f$ is algebraically nondegenerate over $\mathcal{K}_{\mathcal{Q}}.$ Then, for any $\epsilon >0,$
\begin{eqnarray}\label{E1.13}
&&\left(q-(1+\frac{N-n}{\max\{1,\min\{N-n,\kappa\}\}})(n+1)-\epsilon\right)T_{f}(r)\\\nonumber&\leq&\sum^{q}_{i=1}\frac{1}{d_{j}}N^{[L_0]}(r,f^{*}Q_i)+o(T_{f}(r)).
\end{eqnarray}
holds for all $r$ outside a set with finite Lebesgue measure, where
\begin{eqnarray*}
L_{0}:={L+n \choose n}p_{0}^{{L+n \choose n}{L+n \choose n}-1){q \choose n}-2}-1.
\end{eqnarray*}with
$$L:=(n +1)d +2(1+\frac{N-n}{\max\{1,\min\{N-n,\kappa\}\}})(n +1)^{3}I(\epsilon^{-1})d,$$ $d:= lcm(d_{1},\ldots, d_{q})$ is the least common multiple of all $\{d_{i}\},$ and $$p_{0}:=\left[\frac{{L+n \choose n}({L+n \choose n}-1){q \choose n}-1}{\log(1+\frac{\epsilon}{3(n+1)(1+\frac{N-n}{\max\{1,\min\{N-n,\kappa\}\}})})}\right]^{2}.$$
\end{theorem}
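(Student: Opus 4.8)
The plan is to run the argument of Dethloff--Tan \cite{C} and of Quang \cite{Q1} for moving hypersurfaces, inserting at the combinatorial step the refinement made possible by the index $\kappa$ in the spirit of Ji--Yan--Yu \cite{J2}. Since here $V=\mathbb{P}^{n}(\mathbb{C})$, the Hilbert function of the filtration that will appear is simply $\binom{L+n}{n}$ and no geometry of a singular variety intervenes. After the harmless normalization $Q_{j}\mapsto Q_{j}^{d/d_{j}}$ (which replaces all degrees by the common value $d$, preserves $N$-subgeneral position with index $\kappa$, and changes the truncated counting functions only by a bounded factor absorbed into $L_{0}$), I would carry out the moving-to-fixed reduction: using that $f$ is algebraically nondegenerate over $\mathcal{K}_{\mathcal{Q}}$, compose $f$ with a Veronese-type embedding of degree $L$ and enlarge the ambient projective space so that the coefficients of the $Q_{j}$'s may be treated as constants over the small-function field, following \cite{D,C,Q1}. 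The degree $L$ is chosen exactly so that the characteristic of the composed map equals $\binom{L+n}{n}T_{f}(r)+o(T_{f}(r))$ and the resulting discrepancy between counting functions is $o(T_{f}(r))$; this is what dictates the stated closed form of $L$, the factor $(1+\frac{N-n}{\max\{1,\min\{N-n,\kappa\}\}})$ entering because it is the per-hypersurface multiplicity produced in the next step.

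The heart of the proof is a pointwise combinatorial lemma. For $z$ outside a discrete set and the indeterminacy locus, reorder the hypersurfaces so that $\abs{Q_{1}(f)(z)}\le\cdots\le\abs{Q_{q}(f)(z)}$; only the indices $i$ for which $Q_{i}(f)(z)$ is small contribute to $m_{f}(r,D_{i})$, and these span an increasing chain of intersections $\bigcap_{i\le k}D_{i}(z)$. Because the family is in $N$-subgeneral position \emph{and}, by Definition \ref{D1.1}(c), in general position on every subset of size $\le\kappa$, this chain must drop in dimension at the fastest possible rate over its first $\kappa$ steps; running the Nochka-type counting argument along the chain then yields a weight system $\{\omega_{i}\}$ together with a constant $\tilde{\omega}\le 1+\frac{N-n}{\max\{1,\min\{N-n,\kappa\}\}}$ such that the weighted family behaves, for the purpose of the product/filtration estimate, like a family of at most $n+1$ hypersurfaces in general position through $f(z)$. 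Setting $\kappa=1$ recovers the constant $N-n+1$ of Theorem \ref{TC}, while $\kappa\ge N-n$ collapses it to $2$; establishing the sharp value of $\tilde{\omega}$ and checking that the reordered ``small'' hypersurfaces really form an admissible chain at a generic point of the curve is the step I expect to be the main obstacle.

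Finally, I would apply the truncated Second Main Theorem for a family of fixed hypersurfaces of $\mathbb{P}^{n}(\mathbb{C})$ in general position --- in the form of \cite{Q1} (see also \cite{Y}), whose truncation level is governed by the iteration built on $\binom{L+n}{n}$ and by the parameter $p_{0}$ --- to the weighted family, estimating the Chebyshev constant of the associated filtration of $\mathbb{C}[x_{0},\dots,x_{n}]$ along the chain to extract the coefficient $n+1$, and then combine this with the integrated form of the pointwise weight estimate of the previous paragraph. The coefficient $n+1$ comes from the general-position SMT, the factor $1+\frac{N-n}{\max\{1,\min\{N-n,\kappa\}\}}$ from $\tilde{\omega}$, the arbitrarily small $\epsilon$ from letting $L$ grow (this is the role of the auxiliary integer $I(\epsilon^{-1})$), and the $o(T_{f}(r))$ absorbs the moving-to-fixed errors; tracking $L$ and $p_{0}$ through the truncated SMT produces exactly the stated $L_{0}$. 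The remaining, purely bookkeeping, difficulty is keeping every truncation level uniform in $r$ across these reductions, which is why the closed forms of $L$, $p_{0}$ and $L_{0}$ are as intricate as they are.
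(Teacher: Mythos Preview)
Your overall architecture matches the paper: reduce to common degree $d$, bound the product $\prod_i \|\tilde f\|^d/|Q_i(\tilde f)|$ pointwise by a product over $n$ hypersurfaces in general position raised to a power, run a Corvaja--Zannier filtration on $\mathcal{K}_{\mathcal{Q}}[x_0,\ldots,x_n]_L$, pass to an auxiliary linearly nondegenerate map $F$ via the $\mathcal{L}(\Phi(p))$ construction of \cite{Q1}, and apply the general Second Main Theorem for linear forms to extract the truncation $L_0$. Where you diverge from the paper is exactly at the step you flag as the main obstacle.

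You propose a Nochka-type weight system $\{\omega_i\}$ with $\tilde\omega\le 1+\frac{N-n}{\max\{1,\min\{N-n,\kappa\}\}}$. The paper does \emph{not} use weights; it explicitly advertises that it replaces Nochka weights by Quang's technique. Concretely, the paper proves a replacement lemma (Lemma~\ref{L2.4}): for each $z$ and each ordering $|Q_{i_1}(\tilde f)(z)|\le\cdots\le|Q_{i_{N+1}}(\tilde f)(z)|$ one constructs polynomials $P_1=Q_{i_1},\ldots,P_\kappa=Q_{i_\kappa}$ and $P_t=\sum_{j=\kappa+1}^{N-n+t} c_{tj}Q_{i_j}$ for $\kappa+1\le t\le n+1$, with $P_1(z),\ldots,P_{n+1}(z)$ in general position. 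The index $\kappa$ enters precisely because the first $\kappa$ of the $P_t$'s are the original $Q_{i_t}$'s unchanged; only from step $\kappa+1$ onward are linear combinations needed. The factor $1+\frac{N-n}{\max\{1,\min\{N-n,\kappa\}\}}$ then appears as the \emph{exponent} of $\prod_{j=1}^n \|\tilde f\|^d/|P_{Ij}(\tilde f)|$ in the elementary product estimate \eqref{E2.4}, not as a weight constant.

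Your proposed weight system is not known to exist for hypersurfaces: Nochka's construction is specific to hyperplanes (it is linear algebra over the coefficient field), and Quang's replacement technique was developed exactly because no analogous weights have been found in the nonlinear case. So unless you have an independent construction of $\{\omega_i\}$, this step is a genuine gap. The fix is to drop weights and prove the replacement lemma above, which is the actual new technical content here; once you have it, the remainder of your sketch (filtration, the map $F$ into $\mathbb{P}^{tu-1}$, the choice of $p\le p_0$, the Wronskian estimate giving the truncation) coincides with the paper's argument essentially line for line.
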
\par

Remark that very recently, Yan and Yu \cite{Y} considered the nonconstant holomorphic curve from $\mathbb{C}$ into $\mathbb{P}^{n}(\mathbb{C})$ instead of algebraically nondegenerate and improved Theorem \ref{TC} without truncation. Thus it is interesting to ask the following question£º\par

\begin{question} \emph{In Theorem \ref{T1.1} or Theorem \ref{T1.2}, is it possible to obtain a second main theorem if the condition ``$f$ is algebraically nondegenerate over $\mathcal{K}_{\mathcal{Q}}$" is omitted?}\end{question}

The remainder is the organization as follows. In the next section, we introduce some basic notions and auxiliary results from Nevanlinna theory. Section 3 and Section 4 are the proofs of Theorem \ref{T1.1} and Theorem \ref{T1.2}, respecitively, in which the methods to dealing with moving targets by  Dethloff-Tan \cite{C},  Yan-Yu \cite{Y}, and the techniques to deal with hypersurfaces in subgeneral position instead of Nochka's weights owing to Quang \cite{Q1, Q2} are used in this paper.

\section{Basic notions and auxiliary results from Nevanlinna theory}
\subsection{The first main theorem in Nevanlinna theory}

We set $\|z\|=(\|z_{1}\|^{2}+\cdots+\|z_{m}\|^{2})^{1 / 2}$ for $z=(z_{1}, \ldots, z_{m}) \in \mathbb{C}^{m}$ and define
$$B(r):=\{z \in \mathbb{C}^{m}:\|z\|<r\}, \quad S(r):=\{z \in \mathbb{C}^{m}:\|z\|=r\}(0<r<\infty).$$\\
Define

$${v_{m-1}(z):=\left(d d^{c}\|z\|^{2}\right)^{m-1} \quad \text { and }}$$  $${\sigma_{m}(z):=d^{c} \log \|z\|^{2} \wedge\left(d d^{c} \log \|z\|^{2}\right)^{m-1} \quad \text { on } \quad \mathbf{C}^{m} \backslash\{0\}}.$$

Let $F$ be a nonzero meromorphic function on a domain $\Omega$ in $\mathbb{C}^{m} .$ For a set $\alpha=\left(\alpha_{1}, \ldots, \alpha_{m}\right)$ of nonnegative integers, we set $|\alpha|=\alpha_{1}+\ldots+\alpha_{m}$ and
$$\mathcal{D}^{\alpha} F=\frac{\partial^{|\alpha|} F}{\partial^{\alpha_{1}} z_{1} \ldots \partial^{\alpha_{m}} z_{m}}.$$
We denote by $\nu_{F}^{0}, \nu_{F}^{\infty}$ and $\nu_{F}$ the zero divisor, the pole divisor, and the divisor of the meromorphic function
$F$ respectively.\par
For a divisor $\nu$ on $\mathbb{C}^{m}$ and for a positive integer $M$ or $M=\infty,$ we set
$$\nu^{[M]}(z)=\min \{M, \nu(z)\},$$
$$n(t)=\left\{\begin{array}{ll}{\int_{|\nu| \cap B(t)} \nu(z) v_{m-1}} & {\text { if } m \geq 2} \\ {\sum_{|z| \leq t} \nu(z)} & {\text { if } m=1}.\end{array}\right.$$

The counting function of $\nu$ is defined by
$$N(r, \nu)=\int_{1}^{r} \frac{n(t)}{t^{2 m-1}} d t \quad(1<r<\infty).$$
Similarly, we define $N\left(r, \nu^{[M]}\right)$ and denote it by $N^{[M]}(r, \nu).$

Let $\varphi$ is a nonzero meromorphic function on $\mathbb{C}^{m} $. Define
$$N_{\varphi}(r)=N\left(r, \nu_{\varphi}^{0}\right), N_{\varphi}^{[M]}(r)=N^{[M]}\left(r, \nu_{\varphi}^{0}\right).$$
For brevity we will omit the character $[M]$ if $M=\infty .$\par

Let $f: \mathbb{C}^{m} \longrightarrow \mathbb{P}^{n}(\mathbb{C})$ be a meromorphic mapping. For arbitrarily fixed homogeneous coordinates
$\left(w_{0}: \cdots: w_{n}\right)$ on $\mathbb{P}^{n}(\mathbb{C}),$ we take a reduced representation $\tilde{f}=\left(f_{0}, \ldots, f_{n}\right),$ which means that each $f_{i}$ is a holomorphic function on $\mathbb{C}^{m}$ and $f(z)=\left(f_{0}(z): \cdots ; f_{n}(z)\right)$ outside the analytic set $I(f)=\{f_{0}=\cdots=f_{n}=0\}$ of codimension $\geq 2 .$ Set $\|\tilde{f}\|=(\|f_{0}\|^{2}+\cdots+\|f_{n}\|^{2})^{1 / 2} .$ The characteristic function of $f$ is defined by
$$T_{f}(r)=\int_{S(r)} \log \|\tilde{f}\| \sigma_{m}-\int_{S(1)} \log \|\tilde{f}\| \sigma_{m}.$$\par

Let $f$ and $Q$ be as above. The proximity function of $f$ with respect to $Q,$ denoted by $m_{f}(r,Q),$ is defined by
$$m_{f}(r,Q)=\int_{S(r)} \lambda_{D}(\tilde{f}) \sigma_{m}-\int_{S(1)} \lambda_{D}(\tilde{f}) \sigma_{m},$$
where $Q(\tilde{f}) = Q(f_{0}, \ldots, f_{n}),$ and $\lambda_{Q}(\tilde{f})=\log\frac{\|\tilde{f}\|^{d}\|Q\|}{|Q(\tilde{f})|}$ is the Weil function and $\|Q\|=\max_{I\in{\mathcal{I}_{d}}}\{|a_{I}|\},$ where $\mathcal{I}_{d}:=\{I=(i_{0},\ldots,i_{n})\in\mathbb{Z}^{n+1}_{\geq 0}; i_{0}+\ldots+i_{n}=d\}.$ This definition is independent of the choice of the reduced representation of $f.$\par

We denote by $f^{*}Q$ the pullback of the divisor $Q$ by $f.$ We may see that $f^{*}Q$ identifies with the zero divisor $\nu^{0}_{Q(\tilde{f})}$ of the function $Q(\tilde{f}).$ By Jensen¡¯s fomular, we have
$$N_{Q(\tilde{f})}(r)=\int_{S(r)} \log |Q(\tilde{f})| \sigma_{m}-\int_{S(1)} \log |Q(\tilde{f})| \sigma_{m}.$$ For convenience, we will denote $N(r,f^{*}Q)=N_{Q(f)}(r).$\par

\begin{theorem}(First Main Theorem).\cite{Q2}
Let $f:\mathbb{C}^{m} \longrightarrow \mathbb{P}^{n}(\mathbb{C})$ be a holomorphic map, and let $Q$ be a hypersurface in $\mathbb{P}^{n}(\mathbb{C})$ of degree $d.$ If $f(\mathbb{C}^{m}) \not\subset Q,$ then for every real number $r$ with $0 < r < +\infty,$
$$dT_{f}(r)=m_{f}(r,Q)+N_{Q(f)}(r)+O(1),$$ where $O(1)$ is a constant independent of $r.$
\end{theorem}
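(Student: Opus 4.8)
The statement to be proved is the First Main Theorem in the form
$$dT_f(r) = m_f(r,Q) + N_{Q(f)}(r) + O(1).$$
The plan is to compare the two global Weil/height-type functions attached to $f$, namely $d\log\|\tilde f\|$ and $\log\|Q\|+\log|Q(\tilde f)|+\lambda_Q(\tilde f)$, and to integrate their difference over $S(r)$ using the Jensen formula already recorded in the excerpt. First I would fix a reduced representation $\tilde f=(f_0,\dots,f_n)$ and recall the pointwise identity that defines the Weil function:
$$\lambda_Q(\tilde f)(z) = \log\frac{\|\tilde f(z)\|^{d}\,\|Q\|}{|Q(\tilde f)(z)|},$$
valid off the analytic set $I(f)$ together with the zero set of $Q(\tilde f)$ (both of codimension $\ge 2$ in the first case, a divisor in the second). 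Exponentiating and taking $\log$ gives the clean pointwise relation
$$\log|Q(\tilde f)(z)| = d\log\|\tilde f(z)\| + \log\|Q\| - \lambda_Q(\tilde f)(z).$$

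Next I would integrate this identity against the rotation-invariant measure $\sigma_m$ over $S(r)$ and over $S(1)$, and subtract, so that the constant term $\log\|Q\|$ cancels:
$$\int_{S(r)}\log|Q(\tilde f)|\,\sigma_m - \int_{S(1)}\log|Q(\tilde f)|\,\sigma_m = d\Bigl(\int_{S(r)}\log\|\tilde f\|\,\sigma_m - \int_{S(1)}\log\|\tilde f\|\,\sigma_m\Bigr) - \Bigl(\int_{S(r)}\lambda_Q(\tilde f)\,\sigma_m - \int_{S(1)}\lambda_Q(\tilde f)\,\sigma_m\Bigr).$$
By the definition of $T_f(r)$ the first bracket on the right is exactly $T_f(r)$, and by the definition of $m_f(r,Q)$ the second bracketed quantity is $m_f(r,Q)$ (here one uses that, in the notation of the excerpt, $\lambda_D(\tilde f)$ and $\lambda_Q(\tilde f)$ denote the same Weil function attached to the hypersurface defined by $Q$). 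The left-hand side is, by the Jensen formula quoted just before the statement, equal to $N_{Q(\tilde f)}(r) = N_{Q(f)}(r)$. Rearranging yields $dT_f(r) = m_f(r,Q) + N_{Q(f)}(r)$, and since in the normalization chosen the base-point terms at $r=1$ have already been absorbed, the only discrepancy with a different choice of representation or of $\|Q\|$ is a bounded quantity, giving the stated $O(1)$.

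Two points need a little care. First, one must justify that $\lambda_Q(\tilde f)$ is integrable on $S(r)$ for all $r$: since $f(\mathbb C^m)\not\subset Q$, the function $Q(\tilde f)$ is a nonzero holomorphic function, so $\log|Q(\tilde f)|$ is locally integrable (its $-\infty$ set is a divisor), and $\log\|\tilde f\|$ is continuous; hence $\lambda_Q(\tilde f)$ is $\sigma_m$-integrable on each sphere, and the Jensen/Stokes manipulation is legitimate — this is the standard potential-theoretic fact that $\log$ of a holomorphic function is pluriharmonic off its zero set and the defect in Jensen's formula is precisely the counting function of that divisor. Second, independence of the choice of reduced representation: replacing $\tilde f$ by $h\tilde f$ for a nowhere-zero holomorphic $h$ changes $\log\|\tilde f\|$ and $\log|Q(\tilde f)|$ by $\log|h|$ and $d\log|h|$ respectively, and these contributions cancel in the combination above after the $S(1)$-subtraction (or contribute an $O(1)$), so the identity is well posed. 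The main (and only real) obstacle is purely bookkeeping: matching the $S(1)$ normalization constants and the factor $\log\|Q\|$ so that everything collapses to a single $O(1)$; there is no analytic difficulty, as the result is a direct consequence of the Jensen formula already stated in the excerpt.
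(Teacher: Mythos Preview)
The paper does not actually prove this statement: it is quoted as a known result with the citation \cite{Q2} and no proof is given. Your argument is the standard one and is correct --- the identity follows immediately by integrating the pointwise relation $\log|Q(\tilde f)| = d\log\|\tilde f\| + \log\|Q\| - \lambda_Q(\tilde f)$ over $S(r)$ and $S(1)$ and invoking the Jensen formula that the paper records just before the theorem.
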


Let $\varphi$ be a nonzero meromorphic function on $\mathbb{C}^{m},$ which are occasionally regarded as a meromorphic map into $\mathbb{P}^{1}(\mathbb{C}) .$ The proximity function of $\varphi$ is defined by $$m(r, \varphi):=\int_{S(r)} \log \max (|\varphi|, 1) \sigma_{m}.$$
The Nevanlinna's characteristic function of $\varphi$ is defined as follows $$T(r, \varphi):=N_{\frac{1}{\varphi}}(r)+m(r, \varphi).$$
Then $$T_{\varphi}(r)=T(r, \varphi)+O(1).$$
The function $\varphi$  is said to be small (with respect to f) if $\| T_{\varphi}(r)=o\left(T_{f}(r)\right),$ where here the notion $\|$ means that the property holds possibly outside a set with finite Lebesgue measure.\par

We denote by $\left.\mathcal{M} \text { (resp. } \mathcal{K}_{f}\right)$ the field of all meromorphic functions (resp. small meromorphic functions
with respect to $f)$ on $\mathbb{C}^{m} .$

\subsection{Family of moving hypersurfaces}
Denote by $\mathcal{H}_{\mathbb{C}^{m}}$ the ring of all holomorphic functions on $\mathbb{C}^{m}$. Let $Q$ be a homogeneous polynomial
in $\mathcal{H}_{\mathbb{C}^{m}}\left[x_{0}, \ldots, x_{n}\right]$ of degree $d \geq 1 .$ Denote by $Q(z)$ the homogeneous polynomial over $\mathbb{C}$ obtained by substituting a specific point $z \in \mathbb{C}^{m}$ into the coefficients of $Q .$ We also call a moving hypersurface in $\mathbb{P}^{n}(\mathbb{C})$
each homogeneous polynomial $Q \in \mathcal{H}_{\mathbb{C}^{m}}\left[x_{0}, \ldots, x_{n}\right]$ such that the common zero set of all coefficients of $Q$
has codimension at least two.\par

{\bf A moving hypersurface} Let $Q$ be a moving in $\mathbb{P}^{n}(\mathbb{C})$ of degree $d(\geq 1)$ is defined by
$$Q(z)=\sum_{I\in{\mathcal{I}_{d}}}a_{I}(z)\omega^{I},$$
where $\mathcal{I}_{d}=\{(i_{0}, \ldots, i_{n})\in\mathbf{N}_{0}^{n+1};i_{0}+\cdots+ i_{n}=d\},$ $a_{I}\in\mathcal{H}_{\mathbb{C}^{m}},$ and $\omega^{I}=\omega^{i_{0}}_{0}\cdots\omega^{i_{n}}_{n}.$ We consider the  meromorphic mapping $Q':\mathbb{C}^{m}\rightarrow\mathbb{P}^{N}(\mathbb{C}),$ where $N={n+d \choose n},$ given by $$Q^{\prime}(z)=\left(a_{I_{0}}(z): \cdots: a_{I_{N}}(z)\right)\left(\mathcal{I}_{d}=\left\{I_{0}, \ldots, I_{N}\right\}\right).$$
Here $I_{0}<\cdots<I_{N}$ in the lexicographic ordering. By changing the homogeneous coordinates of $\mathbb{P}^{n}(\mathbb{C})$
if necessary, we may assume that for each given moving hypersurface as above, $a_{I_{0}} \neq 0$ (note that $I_{0}=$
$(0, \ldots, 0, d)$ and $a_{I_{0}}$ is the coefficient of $\omega_{n}^{d}$ ). We set $$\tilde{Q}=\sum_{j=0}^{N} \frac{a_{I_{j}}}{a_{I_{0}}} \omega^{I_{j}}.$$\par

The moving hypersurfaces $\mathcal{Q}$ = $\{Q_{1},\ldots,Q_{q}\}$ is said to be "slow", (with respect to $f$ ) if $\| T_{Q^{\prime}}(r)=o\left(T_{f}(r)\right)$. This is equivalent to $\| T_{\frac{a_{I_{j}}}{a_{I_{0}}}}(r)=o\left(T_{f}(r)\right)(\forall 1 \leq j \leq N),$ i.e., $\frac{a_{I_{j}}}{a_{I_{0}}} \in \mathcal{K}_{f}.$
Let $\{Q_{i}\}^{q}_{i=1}$ be a family of moving hypersurfaces in $\mathbb{P}^{n}(\mathbb{C}),$ $\deg Q_{i}=d_{i}.$ Assume that
$$Q_{i}=\sum_{I\in\mathcal{I}_{d_{i}}}a_{iI} \omega^{I}.$$\par

We denote by $\mathcal{K}_{\mathcal{Q}}$ the smallest subfield of meromorphic function field $\mathcal{M}$ which contains $\mathbb{C}$ and all $\frac{a_{i I_{s}}}{a_{i I_{t}}},$ where $a_{i I_{t}}\not\equiv 0,$ $i\in\{1,...,q\},$ $I_{t}, I_{s}\in\mathcal{I}_{d_{i}}.$ Assume that $f$ is linearly nondegenerate over $\mathcal{K}_{\mathcal{Q}}$ if there is no nonzero linear form $L\in\mathcal{K}_{\mathcal{Q}}[x_{0},\ldots,x_{n}]$ such that $L(f_{0},\ldots,f_{n})\equiv 0,$ and $f$ is algebraically nondegenerate over $\mathcal{K}_{\mathcal{Q}}$ if there is no nonzero homogeneous polynomial $Q\in\mathcal{K}_{\mathcal{Q}}[x_{0},\ldots,x_{n}]$ such that $Q(f_{0},\ldots,f_{n})\equiv 0.$\par

\subsection{ Some theorems and lemmas}
Let $f$ be a nonconstant meromorphic map of $\mathbb{C}^{m}$ into $\mathbb{P}^{n}(\mathbb{C}) .$ Denote by $\mathcal{C}_{f}$ the set of all non-negative
functions $h: \mathbb{C}^{m} \backslash A \longrightarrow[0,+\infty] \subset \overline{\mathbf{R}},$ which are of the form $$\frac{|g_{1}|+\cdots+|g_{l}|}{|g_{l+1}|+\cdots+|g_{l+k}|},$$
where $k, l \in \mathbf{N}, g_{1}, \ldots, g_{l+k} \in \mathcal{K}_{f} \backslash\{0\}$ and $A \subset \mathbb{C}^{m},$ which may depend on $g_{1}, \ldots, g_{l+k},$ is an analytic subset of codimension at least two. Then, for $h \in \mathcal{C}_{f}$ we have
$$\int_{S(r)} \log h \sigma_{m}=o\left(T_{f}(r)\right).$$

Since $\mathcal{Q}=\{Q_{1}, \ldots, Q_{q}\}$ are in $N$-subgeneral position, we have the following lemma.\par

\begin{lemma}\label{L2.1}\cite{D, Q1, Y}
For any $Q_{j_{1}},\ldots,Q_{j_{N+1}}\in\mathcal{Q},$ there exist functions $h_{1}, h_{2}\in \mathcal{C}_{f}$ such that
$$h_{2}\|f\|^{d}\leq\max_{k\in\{1,\ldots,N+1\}}|Q_{j_{k}}(f_{0},\ldots,f_{n})|\leq h_{1}\|f\|^{d}.$$
\end{lemma}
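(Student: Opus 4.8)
\section*{Proof proposal for Lemma \ref{L2.1}}

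The plan is to reduce this to a purely algebraic statement about homogeneous polynomials over $\mathbb{C}$ with moving coefficients, exploiting that $\mathcal{Q}$ is in $N$-subgeneral position. The upper bound $\max_k |Q_{j_k}(f_0,\ldots,f_n)| \leq h_1 \|f\|^d$ is immediate: each monomial $\omega^I$ with $|I|=d$ satisfies $|\tilde f^I| \leq \|\tilde f\|^d$, so taking $h_1 = \max_k \sum_{I\in\mathcal{I}_{d}} |a_{j_k I}/a_{j_k I_0}| + (\text{similar terms})$ — more precisely, after normalizing each $Q_{j_k}$ as in the definition of $\tilde Q$ so that one coefficient is $1$, the remaining coefficients lie in $\mathcal{K}_f$, and the finite sum of their absolute values is a function in $\mathcal{C}_f$. (One must be slightly careful that the normalization is harmless: multiplying $Q_{j_k}(f)$ by the nonvanishing ratio $1/a_{j_k I_0}$ only changes both sides by a factor in $\mathcal{C}_f$, which is absorbed.) So the content is the lower bound.

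For the lower bound, fix a point $z \in \mathbb{C}^m$ outside the exceptional discrete set and outside $I(f)$, and write $w = \tilde f(z) \in \mathbb{C}^{n+1}\setminus\{0\}$. Consider the $N+1$ homogeneous polynomials $Q_{j_1}(z),\ldots,Q_{j_{N+1}}(z)$ in $\mathbb{C}[x_0,\ldots,x_n]$ of degrees $d_{j_1},\ldots,d_{j_{N+1}}$ (one may first reduce to the case of a common degree $d$ by replacing each $Q_{j_k}$ by a suitable power, or by the standard trick of homogenizing; here since the statement writes $\|f\|^d$ with a single $d$, I will assume the $d_j$ are already taken equal to $d$, or pass to $Q_{j_k}^{d/d_{j_k}}$). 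The $N$-subgeneral position hypothesis says precisely that for this $z$ the common zero set $\bigcap_{k=1}^{N+1} \{Q_{j_k}(z) = 0\}$ in $\mathbb{P}^n(\mathbb{C})$ is empty (since $\sharp I = N+1$ forces dimension $\leq -1$). Therefore the function
$$
\Phi_z(w) := \frac{\max_{k} |Q_{j_k}(z)(w)|}{\|w\|^d}
$$
is a well-defined continuous strictly positive function on the compact set $\{w : \|w\|=1\}$, hence is bounded below by some $c(z) > 0$ and above by some $C(z)$. The issue is that $c(z)$ and $C(z)$ must be controlled uniformly in $z$ by functions in $\mathcal{C}_f$.

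The uniform control is the main obstacle, and the standard way around it is a Nullstellensatz-with-bounds argument. Since the $N+1$ forms $Q_{j_k}(z)$ have no common projective zero, by Hilbert's Nullstellensatz there is an integer $s$ (which can be taken depending only on $n$ and $d$, not on $z$) and homogeneous polynomials $b_{k I}(z)$ in $x_0,\ldots,x_n$ with coefficients that are polynomial expressions in the coefficients of the $Q_{j_k}(z)$ divided by an appropriate resultant-type determinant $R(z)$ (which is nonzero exactly when there is no common zero) such that
$$
x_i^{s} = \sum_{k=1}^{N+1} \sum_{I} b_{k I}(z)\, x^{I}\, Q_{j_k}(z) \qquad (i = 0,\ldots,n),
$$
an identity of polynomials. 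Evaluating at $w = \tilde f(z)$ and estimating gives $\|\tilde f(z)\|^{s} \leq (\text{sum of }|b_{kI}(\tilde f(z))|)\cdot \max_k |Q_{j_k}(\tilde f(z))|$. The coefficients of the $b_{kI}$ are rational functions in the $a_{j_k I}(z)$ with the single denominator $R(z)$; after normalizing (dividing through by leading coefficients $a_{j_k I_0}$), these become elements of $\mathcal{K}_f$ — here one uses that $R(z) \not\equiv 0$ as a meromorphic function because the subgeneral position condition holds for generic $z$, so $1/R$ lies in $\mathcal{M}$ and its constituent ratios in $\mathcal{K}_f$. Collecting terms, $\sum_{k,I} |b_{kI}(\tilde f(z))| \cdot \|\tilde f(z)\|^{-(s-d)}$ is a finite sum of terms each of the shape $|g|\cdot |\tilde f^J|/\|\tilde f\|^{\deg}$ with $g \in \mathcal{K}_f$; since $|\tilde f^J| \leq \|\tilde f\|^{|J|}$ this whole expression is dominated by a function $h_2^{-1} \in \mathcal{C}_f$, giving $h_2 \|\tilde f\|^d \leq \max_k |Q_{j_k}(\tilde f)|$ as desired. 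The one delicate point to verify is that the exponent $s$ from the effective Nullstellensatz can indeed be chosen independently of $z$ — this follows because one may run the argument on the generic fiber over the field $\mathcal{K}_f$ (or over $\mathbb{C}(z)$), where "no common zero" is a single algebraic condition, and then specialize; alternatively cite the corresponding lemma in \cite{D} or \cite{Q1}, which is exactly where this estimate was first established in the moving-target setting.
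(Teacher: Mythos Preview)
The paper does not supply a proof of this lemma; it is quoted from \cite{D, Q1, Y}. Your outline is exactly the Nullstellensatz-with-moving-coefficients argument used in those references, and for the case $V=\mathbb{P}^n(\mathbb{C})$ it is correct as written.

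There is one real slip when $V\subsetneq\mathbb{P}^n(\mathbb{C})$ (the setting of Theorem~\ref{T1.1}). The $N$-subgeneral position hypothesis in Definition~\ref{D1.1}(b) only guarantees that $\bigl(\bigcap_{k}D_{j_k}(z)\bigr)\cap V=\emptyset$, not that the common zero locus is empty in all of $\mathbb{P}^n(\mathbb{C})$. Hence your function $\Phi_z$ need not be strictly positive on the whole unit sphere, and the identity $x_i^{s}=\sum_k b_k\,Q_{j_k}(z)$ need not hold in $\mathbb{C}[x_0,\ldots,x_n]$. The fix is routine: apply the Nullstellensatz instead to the ideal $(Q_{j_1}(z),\ldots,Q_{j_{N+1}}(z))+I(V)$, whose projective zero set \emph{is} empty, to obtain
\[
x_i^{s}=\sum_{k=1}^{N+1} b_k\,Q_{j_k}(z)+R,\qquad R\in I(V)_s.
\]
Since $f$ maps into $V$, one has $R(\tilde f)\equiv 0$, and the remainder of your estimate goes through unchanged. (Equivalently: restrict $\Phi_z$ to the compact set $\{\,w\in\widehat V:\|w\|=1\,\}$, where $\widehat V\subset\mathbb{C}^{n+1}$ is the affine cone over $V$; this set contains $\tilde f(z)/\|\tilde f(z)\|$ and $\Phi_z$ is strictly positive there.) With this adjustment your argument is complete and coincides with the proofs in the cited sources.
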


\begin{lemma} (Lemma on logarithmic derivative, see $[6]) .$
Let $f$ be a nonzero meromorphic function on $\mathrm{C}^{m}$.
$$\| \quad m(r, \frac{\mathcal{D}^{\alpha}(f)}{f})=O(\log ^{+} T(r, f))(\alpha \in \mathbf{Z}_{+}^{m}).$$
\end{lemma}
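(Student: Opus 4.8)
The plan is to reduce the multivariable statement to the classical one-variable lemma on logarithmic derivatives, which is standard in Nevanlinna theory. First I would recall that for a nonzero meromorphic function $f$ on $\mathbb{C}^m$ and a multi-index $\alpha = (\alpha_1,\ldots,\alpha_m) \in \mathbf{Z}_+^m$, one can write $\mathcal{D}^\alpha f / f$ as a finite product of ratios $\mathcal{D}^{e_i}(\cdot)/(\cdot)$ by iterating along the coordinate directions, so by subadditivity of the proximity function it suffices to treat a single partial derivative $\partial f/\partial z_i$. Fixing such an $i$, say $i = m$ for notational convenience, I would restrict $f$ to complex lines parallel to the $z_m$-axis, i.e. consider $f_w(\zeta) := f(w, \zeta)$ for $w \in \mathbb{C}^{m-1}$ and $\zeta \in \mathbb{C}$, apply the classical one-variable lemma on logarithmic derivative (Nevanlinna), and then integrate over the sphere $S(r)$ using a Fubini-type argument to convert the slicewise estimates into the global estimate, controlling the error term by $O(\log^+ T(r,f))$.

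The key steps, in order: (1) write $\mathcal{D}^\alpha f/f = \prod \mathcal{D}^{e_{i_k}}(g_k)/g_k$ as a telescoping product and reduce via $m(r, AB) \le m(r,A) + m(r,B) + O(1)$ to the case $|\alpha| = 1$; (2) for a single direction, invoke the one-dimensional lemma on logarithmic derivative for the slice functions $f_w(\zeta)$, giving $m(r, f_w'/f_w) \le C(\log^+ T(r, f_w) + \log r) + O(1)$ off a small exceptional set; (3) integrate this slicewise bound against the appropriate measure on $S(r)$, using the concavity of $\log^+$ (Jensen's inequality) to pull the integral inside, which yields a bound in terms of $\log^+ \big(\int_{S(r)} T(r, f_w)\, \sigma_m\big)$; (4) compare the averaged slice characteristic with $T(r,f)$: one has $\int T(r, f_w)\,\sigma_m \le T(r,f) + O(\log r)$ (up to a bounded factor and lower-order terms coming from the geometry of slicing), so the right-hand side is $O(\log^+ T(r,f)) + O(\log r)$; (5) absorb the $\log r$ term, which is itself $O(\log^+ T(r,f))$ away from the degenerate case where $f$ is rational (and in that case everything is trivially bounded), to conclude. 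Throughout, the exceptional sets accumulated at each restriction step must be shown to assemble into a single set of finite Lebesgue measure, which is what the symbol $\|$ encodes.

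The main obstacle I anticipate is step (4): carefully relating the spherical average of the one-variable characteristic functions of the slices $f_w$ to the genuine characteristic $T_f(r)$ of the map $f$ on $\mathbb{C}^m$. This requires the right choice of slicing (affine lines, or better, complex lines through the origin handled via the Crofton-type / integral-geometric formulas of Vitter or Biancofiore–Stoll) and a change-of-variables computation that keeps the error terms at the order $O(\log r)$ rather than something larger. A secondary technical point is the bookkeeping of exceptional sets when passing from a pointwise (in the slice parameter $w$) estimate valid outside an $r$-set depending on $w$ to a uniform estimate; this is handled by a standard measure-theoretic argument (the exceptional set in $r$ has finite measure for almost every $w$, and Fubini gives the conclusion). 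Since this lemma is quoted from the literature (reference $[6]$ in the paper), in the actual write-up I would simply cite it rather than reproduce this reduction, but the above is the route a self-contained proof would follow.
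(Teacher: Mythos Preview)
The paper does not supply any proof of this lemma: it is stated as a known result with a bare citation to the literature and is then used as a black box in the later arguments. You correctly anticipate this in your final paragraph, so there is no discrepancy to report --- your instinct to simply cite the result matches exactly what the paper does.

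As a side remark on your sketch: the slicing-and-averaging route you outline is workable in spirit, but the standard references for the several-variables logarithmic derivative lemma (Vitter, Biancofiore--Stoll, Fujimoto, Noguchi--Winkelmann) typically proceed more directly via the Poisson--Jensen formula on balls in $\mathbb{C}^m$ together with Green--Stokes type identities, rather than by reducing to complex lines and reassembling. Your step~(4), comparing the spherical average of slice characteristics with $T(r,f)$, and the uniform handling of the $w$-dependent exceptional sets, are exactly the places where the line-slicing approach becomes delicate; the intrinsic higher-dimensional proofs avoid these bookkeeping issues. Since the paper offers no argument at all here, none of this affects the comparison.
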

\begin{proposition}\label{P1.1}\cite{H} Let  $\Phi_{1}, \ldots, \Phi_{k}$ be meromorphic functions on $\mathbb{C}^{m}$ such that $\{\Phi_{1},$ $\ldots, \Phi_{k}\}.$ are linearly independent over $\mathbb{C} .$ Then there exists an admissible set
$$\{\alpha_{i}=(\alpha_{i 1}, \ldots, \alpha_{i m})\}_{i=1}^{k} \subset \mathbf{Z}_{+}^{m}$$
with $|\alpha_{i}|=\sum_{j=1}^{m}|\alpha_{i j}| \leq i-1(1 \leq i \leq k)$ such that the following are satisfied:\\
(i) $\{\mathcal{D}^{\alpha_{i}} \Phi_{1}, \ldots, \mathcal{D}^{\alpha_{i}} \Phi_{k}\}_{i=1}^{k}$ is linearly independent over $\mathcal{M},$ i.e., det $(\mathcal{D}^{\alpha_{i}} \Phi_{j}) \not\equiv 0,$\\
(ii) $\operatorname{det}(\mathcal{D}^{\alpha_{i}}(h \Phi_{j}))=h^{k} \cdot \operatorname{det}(\mathcal{D}^{\alpha_{i}} \Phi_{j})$ for any nonzero meromorphic function $h$ on $\mathbb{C}^{m} .$\end{proposition}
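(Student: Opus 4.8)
The statement is the standard existence lemma for a nonvanishing generalized Wronskian, and I would prove it by induction on $k$, using the normalization $\Phi_j\mapsto\Phi_j/\Phi_1$ to cut down the number of functions and keeping careful track of which mixed derivatives occur so that part (ii) survives.

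For (i) I would first establish the soft statement that \emph{some} choice of multi-indices gives a nonvanishing determinant, and then refine it to the degree bound. If $\det(\mathcal{D}^{\beta_i}\Phi_j)\equiv0$ for every choice of $\beta_1,\dots,\beta_k\in\mathbf Z_+^m$, then the row vectors $(\mathcal{D}^\beta\Phi_1,\dots,\mathcal{D}^\beta\Phi_k)$, $\beta\in\mathbf Z_+^m$, span an $\mathcal M$-subspace of $\mathcal M^k$ of dimension at most $k-1$, so there is a nonzero $(c_1,\dots,c_k)\in\mathcal M^k$ with $\sum_jc_j\mathcal{D}^\beta\Phi_j\equiv0$ for all $\beta$. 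Differentiating this relation with respect to $z_t$ and invoking it again at $\beta+e_t$ shows $(\partial_tc_1,\dots,\partial_tc_k)$ is again such a vector; choosing $(c_j)$ with minimal support, normalized so $c_{j_0}=1$, then forces $\partial_tc_j\equiv0$ for all $t$ and $j$, so the $c_j$ are constants and $\sum_jc_j\Phi_j\equiv0$, contradicting linear independence over $\mathbb C$. For the degree bound I would argue inductively: since $\Phi_1\not\equiv0$, the functions $1,\Phi_2/\Phi_1,\dots,\Phi_k/\Phi_1$ remain linearly independent over $\mathbb C$, so by differentiating appropriately one obtains $k-1$ functions $\Psi_2,\dots,\Psi_k$ that are linearly independent over $\mathbb C$; the inductive hypothesis yields an admissible (indeed \emph{staircase}) set $\{\alpha_2',\dots,\alpha_k'\}$ with $|\alpha_i'|\le i-2$ and $\det(\mathcal{D}^{\alpha_i'}\Psi_j)_{2\le i,j\le k}\not\equiv0$, and a Wronskian-type identity relates this to $\det(\mathcal{D}^{\alpha_i}\Phi_j)_{1\le i,j\le k}$ for the set $\alpha_1=0$, $\alpha_i=\alpha_i'+e_{t_i}$, which satisfies $|\alpha_i|=|\alpha_i'|+1\le i-1$.

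For (ii), the key point is that the admissible set constructed above is a \emph{staircase}: for every $i$, each multi-index $\beta$ with $\beta<\alpha_i$ coincides with some $\alpha_{i'}$, $i'<i$. Leibniz's rule then gives $\mathcal{D}^{\alpha_i}(h\Phi_j)=h\,\mathcal{D}^{\alpha_i}\Phi_j+\sum_{\beta<\alpha_i}\binom{\alpha_i}{\beta}(\mathcal{D}^{\alpha_i-\beta}h)\,\mathcal{D}^{\beta}\Phi_j$, so the $i$-th row of $(\mathcal{D}^{\alpha_i}(h\Phi_j))_{i,j}$ equals $h$ times the $i$-th row of $(\mathcal{D}^{\alpha_i}\Phi_j)_{i,j}$ plus a combination, with coefficients not depending on $j$, of strictly earlier rows of $(\mathcal{D}^{\alpha_i}\Phi_j)_{i,j}$. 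Sweeping the rows from top to bottom and subtracting the appropriate multiples of the already-rescaled rows converts $(\mathcal{D}^{\alpha_i}(h\Phi_j))_{i,j}$ into $h\cdot(\mathcal{D}^{\alpha_i}\Phi_j)_{i,j}$ by determinant-preserving operations, giving $\det(\mathcal{D}^{\alpha_i}(h\Phi_j))=h^k\det(\mathcal{D}^{\alpha_i}\Phi_j)$.

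The step I expect to be the main obstacle is the inductive construction of the admissible set in the several-variable case. In one variable one simply differentiates $\Phi_j/\Phi_1$ once and recurses with the ordinary Wronskian, but for $m\ge2$ a single direction $z_t$ need not make $\{\partial_t(\Phi_j/\Phi_1)\}_{j\ge2}$ linearly independent over $\mathbb C$, so one must select mixed partial derivatives and order the resulting multi-indices so that, simultaneously, the relevant minor is nonzero at each stage and the staircase property required by (ii) holds. This is elementary combinatorial bookkeeping, but it is where the care lies.
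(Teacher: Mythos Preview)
The paper does not prove this proposition; it is quoted from Fujimoto \cite{H} without argument, so there is no ``paper's proof'' to compare with. Your soft argument that \emph{some} family of multi-indices gives a nonvanishing determinant is correct and standard. The gap lies in the passage to the degree bound and to (ii).

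Your proof of (ii) rests on the \emph{downward-closed} (``staircase'') property: every $\beta<\alpha_i$ equals some earlier $\alpha_{i'}$. But the inductive construction you sketch (``$\alpha_1=0$, $\alpha_i=\alpha_i'+e_{t_i}$'') produces only the weaker \emph{tree} property that each $\alpha_i$ differs from some predecessor by a unit vector, and that is \emph{not} enough for the row reduction you describe. Concretely, for $m=2$, $k=3$ take $\alpha_1=(0,0)$, $\alpha_2=(1,0)$, $\alpha_3=(1,1)$ (tree but not downward-closed, since $(0,1)<\alpha_3$ is missing) and $\Phi_1=1$, $\Phi_2=z_1$, $\Phi_3=z_1z_2$; then $\det(\mathcal D^{\alpha_i}\Phi_j)=1$ while $\det(\mathcal D^{\alpha_i}(z_1\Phi_j))=2z_1^3\neq z_1^3$, so (ii) fails for this set. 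Independently, the inductive step itself is problematic when $m\ge2$: no single direction $t$ need make $\{\partial_t(\Phi_j/\Phi_1)\}_{j\ge2}$ linearly independent over $\mathbb C$ (e.g.\ $\Phi_1=1,\ \Phi_2=z_1,\ \Phi_3=z_2$), and if different $t_i$ are used there is no Wronskian identity linking the two determinants.

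Fujimoto's construction avoids both problems by being \emph{greedy}: set $\alpha_1=0$, and at step $i$ choose $\alpha_i$ of minimal $|\alpha_i|$ with $\mathcal D^{\alpha_i}\Phi\notin V_{i-1}:=\mathrm{span}_{\mathcal M}\{\mathcal D^{\alpha_{i'}}\Phi:i'<i\}$. A short differentiation argument (if every $|\beta|\le i-1$ gave $\mathcal D^\beta\Phi\in V_{i-1}$, then differentiating the relations shows $\mathcal D^\beta\Phi\in V_{i-1}$ for all $\beta$, contradicting your first paragraph) forces $|\alpha_i|\le i-1$. Crucially, the greedy choice guarantees that for every $\beta$ with $|\beta|<|\alpha_i|$, in particular every $\beta<\alpha_i$, the row $\mathcal D^\beta\Phi$ lies in the $\mathcal M$-span of the earlier rows (not necessarily equal to one of them). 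This weaker admissibility is exactly what your Leibniz computation really needs: it gives $(\mathcal D^{\alpha_i}(h\Phi_j))_{i,j}=L\cdot(\mathcal D^{\alpha_i}\Phi_j)_{i,j}$ with $L$ lower-triangular over $\mathcal M$ having diagonal entries $h$, whence (ii).
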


\begin{theorem}\label{TB}  (See \cite{Ru2}, Theorem 2.31) Let $f$ be a linearly nondegenerate meromorphic mapping of $\mathbb{C}^{m}$ in
$\mathbb{P}^{n}(\mathbb{C})$ with a reduced representation $\tilde{f}=(f_{0}, \ldots, f_{n})$ and let $H_{1}, \ldots, H_{q}$ be q arbitrary hyperplanes in $\mathbb{P}^{n}(\mathbb{C}).$ Then we have
$$\| \int_{S(r)} \max _{K} \log \left(\prod_{j \in K} \frac{\|\tilde{f}\| \cdot\left\|H_{j}\right\|}{\left|H_{j}(\tilde{f})\right|} \sigma_{m}\right) \leq(n+1) T_{f}(r)-N_{W^{\alpha}(f_{i})}(r)+o(T_{f}(r)),$$
where $\alpha$ is an admissible set with respect to  $\tilde{f}$ (as in Proposition  2.3) and the maximum is taken over all subsets $K \subset\{1, \ldots, q\}$ such that $\{H_{j} ; j \in K\}$ is linearly independent.
\end{theorem}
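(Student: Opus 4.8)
\textbf{Proof proposal for Theorem \ref{TB}.}

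The plan is to reduce the statement to the classical Cartan second main theorem with the Wronskian term, following Ru's exposition. First I would fix an admissible set $\alpha = \{\alpha_i\}_{i=0}^{n}$ for $\tilde f = (f_0,\dots,f_n)$ as provided by Proposition \ref{P1.1}, applied to the linearly independent family $\{f_0,\dots,f_n\}$; this produces the generalized Wronskian $W^\alpha(f) := \det(\mathcal{D}^{\alpha_i} f_j)_{0\le i,j\le n}$, which is not identically zero precisely because $f$ is linearly nondegenerate. The key algebraic identity is that for any subset $K \subset \{1,\dots,q\}$ with $\#K = n+1$ and $\{H_j : j\in K\}$ linearly independent, one has $W^\alpha(f) = c_K \cdot \det\big(\mathcal{D}^{\alpha_i}(H_j(\tilde f))\big)_{j\in K}$ where $c_K$ is a nonzero constant (the determinant of the coefficient matrix of $\{H_j\}_{j\in K}$), using part (ii)-type multilinearity of the Wronskian under linear changes of the $f_j$'s. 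This lets me write, for each such $K$,
\begin{eqnarray*}
\prod_{j\in K}\frac{\|\tilde f\|\cdot\|H_j\|}{|H_j(\tilde f)|} = \frac{\|\tilde f\|^{n+1}\prod_{j\in K}\|H_j\|}{|c_K|\,|W^\alpha(f)|}\cdot\frac{|W^\alpha(f)|}{\prod_{j\in K}|H_j(\tilde f)|} \cdot |c_K|,
\end{eqnarray*}
and then bound the last factor by a sum of logarithmic-derivative-type quantities after expanding the determinant $\det(\mathcal{D}^{\alpha_i}(H_j(\tilde f)))$ as a sum of products $\prod_i \mathcal{D}^{\alpha_i}(H_{j_i}(\tilde f))$ and factoring $H_{j}(\tilde f)$ out of each column.

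Next I would integrate over $S(r)$. Taking logarithms and using that $\log\max_K(\cdots)\le\sum_K$ is controlled termwise, the dominant contribution splits into two pieces: the term $\int_{S(r)}\log\big(\|\tilde f\|^{n+1}/|W^\alpha(f)|\big)\sigma_m$, and error terms of the shape $\int_{S(r)}\log^+\big|\mathcal{D}^{\alpha_i}(H_{j_i}(\tilde f))/H_{j_i}(\tilde f)\big|\sigma_m$. For the second type, since $H_{j_i}(\tilde f)$ is (up to the factor $\|\tilde f\|^{-1}$ absorbed elsewhere) a meromorphic function whose characteristic is $O(T_f(r))$, the Lemma on logarithmic derivatives gives each such term is $O(\log^+ T_f(r)) = o(T_f(r))$ outside a set of finite Lebesgue measure; there are only finitely many choices of indices, so the total error is $o(T_f(r))$. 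For the first piece, by the Jensen formula applied to the holomorphic function $W^\alpha(f)$ (note $W^\alpha(f)$ is entire since the $f_j$ are entire),
\begin{eqnarray*}
\int_{S(r)}\log\frac{\|\tilde f\|^{n+1}}{|W^\alpha(f)|}\sigma_m = (n+1)T_f(r) - N_{W^\alpha(f)}(r) + O(1),
\end{eqnarray*}
where I use $\int_{S(r)}\log\|\tilde f\|\,\sigma_m = T_f(r)+O(1)$ by the definition of $T_f$ and $\int_{S(r)}\log|W^\alpha(f)|\sigma_m = N_{W^\alpha(f)}(r)+O(1)$ by Jensen. Collecting the pieces yields exactly the claimed inequality.

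Two points need care. First, the passage from the pointwise estimate to the integral must handle the factors $\|H_j\|$: these are constants for fixed hyperplanes, hence contribute only $O(1)$, and the $|c_K|$ are likewise harmless constants — I would just note this. Second, and this is the main obstacle, one must verify that a single admissible set $\alpha$ works simultaneously for \emph{all} admissible subsets $K$ in the max, i.e.\ that $\det(\mathcal{D}^{\alpha_i}(H_j(\tilde f)))_{j\in K}\not\equiv 0$ for every linearly independent $\{H_j\}_{j\in K}$; this follows because that determinant equals $c_K W^\alpha(f)$ with $c_K\ne 0$, so nonvanishing of $W^\alpha(f)$ (guaranteed by Proposition \ref{P1.1}(i) together with linear nondegeneracy of $f$) suffices. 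The remaining routine work is the bookkeeping in expanding the determinants and matching the logarithmic-derivative terms, which I would carry out but not belabor here, citing \cite{Ru2} for the detailed computation.
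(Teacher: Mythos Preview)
The paper does not supply its own proof of this statement: Theorem~\ref{TB} is quoted as a preliminary result from \cite{Ru2} (Theorem~2.31) and is used later as a black box in the proofs of Theorems~\ref{T1.1} and~\ref{T1.2}. So there is no in-paper argument to compare against.

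That said, your sketch is the standard route to this inequality and is essentially correct. One small point worth tightening: you restrict to subsets $K$ with $\#K=n+1$, whereas the statement allows any $K$ with $\{H_j:j\in K\}$ linearly independent. You should remark that the maximum over such $K$ is attained (up to an $O(1)$ term) on subsets of size exactly $n+1$, since each Weil function $\log\big(\|\tilde f\|\,\|H_j\|/|H_j(\tilde f)|\big)$ is bounded below, so one may always enlarge $K$ to a basis; alternatively, for $\#K<n+1$ complete $\{H_j\}_{j\in K}$ to a linearly independent $(n+1)$-tuple and absorb the extra factors into the $O(1)$. With that adjustment your Wronskian change-of-basis identity, the Jensen computation for $\int_{S(r)}\log\big(\|\tilde f\|^{n+1}/|W^\alpha(f)|\big)\sigma_m$, and the logarithmic-derivative estimate for the column-expanded error terms go through exactly as you indicate.
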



\section{Proof of Theorem \ref{T1.1}}
Firstly, we may assume that $Q_{1}, \ldots, Q_{q}$ have the same degree $\deg Q_{j}=d_{j}=d.$ Set
$$Q_{j}(z)=\sum_{I\in{\mathcal{I}_{d}}}a_{jI}(z)\textbf{x}^{I},j=1,...,q.$$
For each $j$, there exists $a_{jI_{j}} (z)$, one of the coefficients in $Q_{j}(z)$, such that $a_{jI_{j}}(z)\not\equiv 0.$ We fix this $a_{jI_{j}}$, then set $\tilde{a}_{jI}(z)=\frac{a_{jI}(z)}{a_{jI_{j}}(z)}$ and
$$\tilde{Q}_{j}(z)= \sum_{I\in\mathcal{I}_{d}}\tilde{a}_{jI}(z)\textbf{x}^{I}$$
which is a homogeneous polynomial in $\mathcal{K}_{\mathcal{Q}}[x_{0},\ldots,x_{n}]$. By definition of the proximity function and Weil function, we have
$$\lambda_{\tilde{Q}_{j}}(\textbf{f})=\log\frac{\|\textbf{f}\|^{d}\|Q_{j}\|}{|Q_{j}(\textbf{f})|}=\log\frac{\|\textbf{f}\|^{d}\|\tilde{Q}_{j}\|}{|\tilde{Q}_{j}(\textbf{f})|},$$
for $j=1,\ldots,q.$

For a fixed point $z\in \mathbb{C}^{m}\setminus \cup ^{q}_{i=1}\tilde{Q}_{i}(\tilde{f})^{-1}(\{0, \infty\}).$ We may assume that there exists a renumbering $\{1,\ldots,q\}$ such that
$$|\tilde{Q}_{1(z)}(\tilde{f})(z)|\leq|\tilde{Q}_{2(z)}(\tilde{f})(z)|\leq\cdots\leq|\tilde{Q}_{q(z)}(\tilde{f})(z)|.$$
By Lemma \ref{L2.1}, we have $\max_{j\in\{1,\ldots, N+1\}}|\tilde{Q}_{j(z)}(\tilde{f})(z)|=|\tilde{Q}_{N+1(z)}(\tilde{f})(z)|\geq h\|\tilde{f}\|^{d}$ for some $h\in \mathcal{C}_{f}, $ i. e. ,
$$\frac{\|\tilde{f}(z)\|^{d}}{|\tilde{Q}_{N+1(z)}(\tilde{f})(z)|}\leq\frac{1}{h(z)}.$$
Hence,
\begin{eqnarray}\label{E1.5}
\prod^{q}_{j=1}\frac{\|\tilde{f}(z)\|^{d}}{|\tilde{Q}_{j}(\tilde{f})(z)|}\leq\frac{1}{h^{q-N}(z)}\prod^{N}_{j=1}\frac{\|\tilde{f}(z)\|^{d}}{|\tilde{Q}_{j(z)}(\tilde{f})(z)|}.
\end{eqnarray}\par

Let $\mathcal{K}_{\mathcal{Q}}$ be an arbitrary field over $\mathbb{C}^{m}$ generated by a set of meromorphic function on $\mathbb{C}^{m}.$ Let $V$ be a sub-variety in $\mathbb{P}^{n}(\mathbb{C})$ of dimension $\ell$ defined by the homogenous ideal $I(V)\subset \mathbb{C}[x_{0},\cdots,x_{n}].$ Denote by $I_{\mathcal{K}_{\mathcal{Q}}}(V)$ the ideal in $\mathcal{K}_{\mathcal{Q}}[x_{0},\cdots,x_{n}]$ generated by $I(V).$\par

Since $f:\mathbb{C}^{m}\rightarrow V\subset\mathbb{P}^{n}(\mathbb{C})$ is algebraically nondegenerate over $\mathcal{K}_{\mathcal{Q}},$ there is no homogeneous polynomial $P\in\mathcal{K}_{\mathcal{Q}}[x_{0},\ldots,x_{n}]/I_{\mathcal{K}_{\mathcal{Q}}}(V)$ such that $P(f_{0},\ldots,f_{n})\equiv 0.$\par

For a positive integer $L$, let $\mathcal{K}_{\mathcal{Q}}[x_{0},\ldots,x_{n}]_{L}$ be the vector space of homogeneous polynomials of degree $L$, and let $I_{\mathcal{K}_{\mathcal{Q}}}(V)_{L}:=I_{\mathcal{K}_{\mathcal{Q}}}(V)\cap \mathcal{K}_{\mathcal{Q}}[x_{0},\ldots,x_{n}]_{L}$. Set $V_{L}:=\mathcal{K}_{\mathcal{Q}}[x_{0},\ldots,x_{n}]_{L}/I_{\mathcal{K}_{\mathcal{Q}}}(V)_{L}$, denote by $[g]$ the projection of $g$ in $V_{L}$. We have the following basic fact from the theory of Hilbert polynomials (e.g. see \cite{So}):\par

\begin{lemma}\label{L2.2}
 $M:= \dim_{\mathcal{K}_{\mathcal{Q}}}V_{L} =\frac{\Delta L^{\ell}}{\ell!} + \rho(L)$, where $\rho(L)$ is an $O(L^{\ell-1})$ function depending on the variety $V$. Moreover, there exists an integer $L_{0}$ such that $\rho(L)$ is a polynomial function of $L$ for $L > L_{0}$.
\end{lemma}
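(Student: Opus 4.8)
The plan is to reduce the statement, which is phrased over the (transcendental) coefficient field $\mathcal{K}_{\mathcal{Q}}$, to the classical theory of Hilbert functions and Hilbert polynomials of a projective variety over $\mathbb{C}$, by a flat base-change argument. First I would note that, by the very definition of $I_{\mathcal{K}_{\mathcal{Q}}}(V)$ as the ideal of $\mathcal{K}_{\mathcal{Q}}[x_{0},\ldots,x_{n}]$ generated by $I(V)\subset\mathbb{C}[x_{0},\ldots,x_{n}]$, and since $I(V)$ is already an ideal of $\mathbb{C}[x_{0},\ldots,x_{n}]$, one has $I_{\mathcal{K}_{\mathcal{Q}}}(V)=\mathcal{K}_{\mathcal{Q}}[x_{0},\ldots,x_{n}]\cdot I(V)=\mathcal{K}_{\mathcal{Q}}\cdot I(V)$, i.e.\ every element of $I_{\mathcal{K}_{\mathcal{Q}}}(V)$ is a finite $\mathcal{K}_{\mathcal{Q}}$-linear combination of elements of $I(V)$. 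Because $\mathcal{K}_{\mathcal{Q}}$ is a field, hence a flat $\mathbb{C}$-module, the inclusion $I(V)\hookrightarrow\mathbb{C}[x_{0},\ldots,x_{n}]$ stays injective after applying $\mathcal{K}_{\mathcal{Q}}\otimes_{\mathbb{C}}-$, so the natural map $\mathcal{K}_{\mathcal{Q}}\otimes_{\mathbb{C}}I(V)\to\mathcal{K}_{\mathcal{Q}}[x_{0},\ldots,x_{n}]$ is injective with image exactly $I_{\mathcal{K}_{\mathcal{Q}}}(V)$. Passing to the homogeneous part of degree $L$ gives $I_{\mathcal{K}_{\mathcal{Q}}}(V)_{L}\cong\mathcal{K}_{\mathcal{Q}}\otimes_{\mathbb{C}}I(V)_{L}$ inside $\mathcal{K}_{\mathcal{Q}}[x_{0},\ldots,x_{n}]_{L}\cong\mathcal{K}_{\mathcal{Q}}\otimes_{\mathbb{C}}\mathbb{C}[x_{0},\ldots,x_{n}]_{L}$, whence
$$V_{L}=\mathcal{K}_{\mathcal{Q}}[x_{0},\ldots,x_{n}]_{L}/I_{\mathcal{K}_{\mathcal{Q}}}(V)_{L}\;\cong\;\mathcal{K}_{\mathcal{Q}}\otimes_{\mathbb{C}}\bigl(\mathbb{C}[x_{0},\ldots,x_{n}]_{L}/I(V)_{L}\bigr).$$
Counting dimensions, $M=\dim_{\mathcal{K}_{\mathcal{Q}}}V_{L}=\dim_{\mathbb{C}}\bigl(\mathbb{C}[x_{0},\ldots,x_{n}]_{L}/I(V)_{L}\bigr)=H_{V}(L)$, the usual Hilbert function of the graded coordinate ring of $V$.

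Having identified $M$ with $H_{V}(L)$, I would then invoke the classical structure theory: there is an integer $L_{0}$ (e.g.\ the Castelnuovo--Mumford regularity of $I(V)$) such that $H_{V}(L)$ coincides, for all $L>L_{0}$, with the Hilbert polynomial $P_{V}(L)$, which is a polynomial in $L$ of degree exactly $\ell=\dim V$ whose leading term is $\tfrac{\Delta}{\ell!}L^{\ell}$, where $\Delta=\deg V$ is the projective degree of $V$. Setting $\rho(L):=H_{V}(L)-\tfrac{\Delta L^{\ell}}{\ell!}$, one gets that for $L>L_{0}$ the function $\rho(L)=P_{V}(L)-\tfrac{\Delta L^{\ell}}{\ell!}$ is a polynomial in $L$ of degree at most $\ell-1$, hence $\rho(L)=O(L^{\ell-1})$; and since $\rho$ takes only finitely many values on $1\le L\le L_{0}$, the estimate $\rho(L)=O(L^{\ell-1})$ holds for all $L$, with $\rho$ genuinely a polynomial function of $L$ once $L>L_{0}$. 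This is exactly the assertion of the lemma.

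The base-change step is formal and causes no real difficulty, provided one keeps track of the fact (which is how the paper defines it) that $I_{\mathcal{K}_{\mathcal{Q}}}(V)$ is extended from $\mathbb{C}$ and that $\mathcal{K}_{\mathcal{Q}}\otimes_{\mathbb{C}}-$ is exact. The one genuinely nontrivial ingredient is the classical fact that the Hilbert function of a projective variety is eventually a polynomial whose degree is $\dim V$ and whose leading coefficient is $\deg(V)/(\dim V)!$; for this I would simply cite the standard literature (the discussion in \cite{So}, or the textbook treatments in Hartshorne or Mumford), rather than reproving the theory of Hilbert polynomials and the degree interpretation of the leading term here. The only minor point worth a remark is the meaning of $\Delta$ when one allows arithmetic multiplicities, but under the standing assumption that $V$ is irreducible this is the ordinary projective degree of $V$, and no ambiguity arises.
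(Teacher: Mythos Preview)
Your argument is correct. The paper does not actually prove this lemma: it simply introduces it as ``the following basic fact from the theory of Hilbert polynomials (e.g.\ see \cite{So})'' and states the result without further justification. Your base-change step, showing that $\dim_{\mathcal{K}_{\mathcal{Q}}}V_{L}$ coincides with the ordinary Hilbert function $H_{V}(L)$ of the homogeneous coordinate ring of $V$ over $\mathbb{C}$, makes explicit the reduction that the paper leaves implicit; after that, your appeal to the standard structure of the Hilbert polynomial (degree $\ell$, leading coefficient $\deg(V)/\ell!$, eventual agreement with $H_{V}$) is exactly the content of the citation. So your approach is the same in spirit but strictly more detailed than what the paper provides.
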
\par

Let $a$ be an arbitrary point in $\mathbb{C}^{m}$ such that all coefficients of $P_{1},\ldots,P_{s}$ are holomorphic at
$a$, denote by $I(V(a))$ the homogeneous ideal in $\mathbb{C}[x_{0},\ldots, x_{n}]$ generated by $P_{1}(a),\ldots,P_{s}(a)$, let $V(a)$ be the variety in $V(a)\subset\mathbb{P}^{n}(\mathbb{C})$ defined by $I(V(a))$, then we have
\begin{lemma}\label{L2.7.}\cite{D, Q1, Y}
 $\dim V(a)=\ell,$ for all $a\in\mathbb{C}$ excluding a discrete subset.\par
\end{lemma}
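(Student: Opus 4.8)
The plan is to reduce the statement to a genericity argument about how the dimension of the zero locus of a family of polynomials varies when one specializes the coefficients. Recall that $V$ is defined over $\mathcal{K}_{\mathcal{Q}}$ by the generators $P_1,\dots,P_s$ of $I_{\mathcal{K}_{\mathcal{Q}}}(V)$, and $\dim_{\mathcal{K}_{\mathcal{Q}}} V = \ell$ in the sense that the Hilbert polynomial (over the function field $\mathcal{K}_{\mathcal{Q}}$) has degree $\ell$, consistently with Lemma \ref{L2.2}. First I would fix a point $a\in\mathbb{C}^m$ at which all the (finitely many) coefficients of $P_1,\dots,P_s$ are holomorphic; this already excludes only a discrete set (in fact an analytic set of codimension one). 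At such an $a$, the specialized polynomials $P_1(a),\dots,P_s(a)$ are honest polynomials over $\mathbb{C}$ and cut out $V(a)\subset\mathbb{P}^n(\mathbb{C})$.

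The key step is an upper semicontinuity argument for $\dim V(a)$ together with a matching lower bound. For the upper bound, I would use that the rank of the multiplication/evaluation maps
\[
\mathcal{K}_{\mathcal{Q}}[x_0,\dots,x_n]_{L-d_i}\otimes (P_i)\ \longrightarrow\ \mathcal{K}_{\mathcal{Q}}[x_0,\dots,x_n]_L
\]
defining $I_{\mathcal{K}_{\mathcal{Q}}}(V)_L$ is computed by non-vanishing of certain maximal minors, each of which is a polynomial (hence meromorphic) function of $a$; outside the discrete zero set of the relevant minors, the specialized Hilbert function $\dim_{\mathbb{C}}\big(\mathbb{C}[x_0,\dots,x_n]_L/I(V(a))_L\big)$ does not exceed $\dim_{\mathcal{K}_{\mathcal{Q}}} V_L$ for every $L$ in the (finite) range needed to read off the dimension and degree from the Hilbert polynomial. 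This forces $\dim V(a)\le \ell$ off a discrete set. For the lower bound, I would exploit that $f$ is algebraically nondegenerate over $\mathcal{K}_{\mathcal{Q}}$: the image $f(\mathbb{C}^m)$ is Zariski dense in $V$, and since $f_0,\dots,f_n$ are holomorphic, for generic $a$ the point $f(a)$ lies on $V(a)$ (because $P_i(a)(f(a))$ is the value at $a$ of the holomorphic function $P_i(f)$, which vanishes identically as $f$ maps into $V$). More usefully, the same minor/rank computation run the other way shows the specialized Hilbert function is also $\ge \dim_{\mathcal{K}_{\mathcal{Q}}} V_L$ off a discrete set, since a minor that is nonzero over $\mathcal{K}_{\mathcal{Q}}$ stays nonzero at generic $a$; hence $\dim V(a)\ge\ell$ there too. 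Combining the two bounds gives $\dim V(a)=\ell$ for all $a$ outside a discrete subset, which is precisely the claim; the fact that a countable union of such discrete exceptional sets (one per degree $L$ in the finite relevant range) is again discrete is immediate since the range is finite.

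The main obstacle I anticipate is the bookkeeping needed to make ``the specialized Hilbert function equals the generic one off a discrete set'' precise and uniform: a priori one must control all $L$, but Lemma \ref{L2.2} tells us $\rho(L)$ is eventually polynomial, so it suffices to pin down the Hilbert function on finitely many values of $L$ (up to and slightly beyond $L_0$), and for each such $L$ the exceptional locus is the zero set of finitely many nonzero meromorphic functions on $\mathbb{C}^m$, hence an analytic set of codimension $\ge 1$, a fortiori discrete after intersecting with a generic line — or simply a proper analytic subset, which is all that is used downstream. A secondary technical point is ensuring that the generators $P_1,\dots,P_s$ can be chosen with coefficients in $\mathcal{K}_{\mathcal{Q}}$ whose polar sets are discrete; this follows because $\mathcal{K}_{\mathcal{Q}}$ is generated by ratios $a_{iI_s}/a_{iI_t}$ of entire functions, so clearing denominators replaces each $P_i$ by a polynomial with entire coefficients, changing neither $V$ nor $V(a)$ at points where the chosen denominators are nonzero. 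With these reductions the argument is the standard ``dimension is generically preserved under specialization of coefficients'' fact, and I would cite \cite{D, Q1, Y} for the detailed version while recording the above as the structure of the proof.
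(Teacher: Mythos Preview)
The paper does not supply its own proof of this lemma; it merely cites \cite{D, Q1, Y} and states the result, so there is no in-paper argument to compare against directly. Your approach --- controlling the specialized Hilbert function $\dim_{\mathbb{C}}\bigl(\mathbb{C}[x_0,\dots,x_n]_L/I(V(a))_L\bigr)$ by the rank of the matrix of coefficients of the $P_i$'s, and observing that the nonvanishing of the relevant maximal minors is a generic (open dense) condition on $a$ --- is the standard and correct route, and is essentially what one finds in the cited references.

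Two minor remarks. First, in the setting of \emph{this} paper, $V\subset\mathbb{P}^n(\mathbb{C})$ is a \emph{fixed} variety defined over $\mathbb{C}$ and $I_{\mathcal{K}_{\mathcal{Q}}}(V)$ is simply the ideal generated by $I(V)$ over the larger field; hence one may take $P_1,\dots,P_s\in\mathbb{C}[x_0,\dots,x_n]$ with constant coefficients, so that $V(a)=V$ for every $a$ and the lemma is trivial here. The genuinely nontrivial statement (which your argument does cover) belongs to the cited papers, where the generators may have nonconstant coefficients. Second, the domain is $\mathbb{C}^m$, so the exceptional locus is a proper analytic subset (codimension $\geq 1$) rather than literally a discrete set; you note this yourself, and only this weaker conclusion is ever used later (e.g.\ in the hypotheses of Lemma~\ref{L2.4}). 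Your detour through the algebraic nondegeneracy of $f$ for the lower bound is unnecessary --- the same minor argument handles both inequalities, as you eventually say --- but it does no harm.
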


Next, we prove the following lemma concerning on the hypersurfaces located in $N$-subgeneral position with index $\kappa,$ which plays the role in this paper. The method of it is originally from Quang \cite{Q1, Q2}.\par

\begin{lemma}\label{L2.4}
Let $\tilde{Q}_{1},\ldots,\tilde{Q}_{N+1}$ be homogeneous polynomials in $\mathcal{K}_{\mathcal{Q}}[x_{0},\ldots,x_{n}]$ of the same degree $d$ $\geq$ 1, in (weakly) $N$-subgeneral position with index $\kappa$ in $V.$ For each point $a$ $\in$ $\mathbb{C}^{m}$ satisfying the following conditions:\par

(i) the coefficients of $\tilde{Q}_{1},\ldots,\tilde{Q}_{N+1}$ are holomorphic at $a$,\par

(ii) $\tilde{Q}_{1}(a),\ldots,\tilde{Q}_{N+1}(a)$ have no non-trivial common zeros,\par

(iii) $\dim V(a)=\ell,$\par

\noindent then there exist homogeneous polynomials
$\tilde{P}_{1}(a)=\tilde{Q}_{1}(a),\ldots,\tilde{P}_{\kappa}(a)=\tilde{Q}_{\kappa}(a),$ $\tilde{P}_{\kappa+1}(a),\ldots, \tilde{P}_{\ell+1}(a)$ $\in$ $\mathbb{C}[x_{0},\ldots,x_{n}]$ with
$$\tilde{P}_{t}(a)=\sum^{N-\ell+t}_{j=\kappa+1}c_{tj}\tilde{Q}_{j}(a),\,\,\, c_{tj}\in\mathbb{C},\,\,\, t\geq \kappa+1,$$
such that
$$\left(\bigcap^{\ell+1}_{t=1}\tilde{P}_{t}(a)\right)\cap V =\emptyset.$$
\end{lemma}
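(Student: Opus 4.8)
The plan is to construct the polynomials $\tilde P_{\kappa+1}(a),\dots,\tilde P_{\ell+1}(a)$ one at a time by a dimension-reduction (induction) argument, choosing each new polynomial as a sufficiently generic $\mathbb C$-linear combination of a suitable batch of the $\tilde Q_j(a)$'s so that it cuts the dimension of the current intersection down by one. Concretely, set $\tilde P_t(a)=\tilde Q_t(a)$ for $t=1,\dots,\kappa$; the index-$\kappa$ hypothesis (part (c) of Definition \ref{D1.1}) guarantees $\operatorname{codim}\big(\bigcap_{t=1}^{\kappa}\tilde P_t(a)\cap V\big)\ge\kappa$, i.e. $\dim\big(\bigcap_{t=1}^{\kappa}\tilde P_t(a)\cap V\big)\le\ell-\kappa$. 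Then, inductively, suppose $\tilde P_1(a),\dots,\tilde P_t(a)$ have been constructed with
$$
\dim\Big(\bigcap_{i=1}^{t}\tilde P_i(a)\cap V\Big)\le \ell-t
$$
for some $t$ with $\kappa\le t\le \ell$. I would look at the (finitely many) irreducible components of $Z_t:=\bigcap_{i=1}^{t}\tilde P_i(a)\cap V$, each of dimension $\le \ell-t$, and seek $\tilde P_{t+1}(a)=\sum_{j=\kappa+1}^{N-\ell+t+1}c_{t+1,j}\tilde Q_j(a)$ that avoids every component of $Z_t$ of dimension exactly $\ell-t$ (components of smaller dimension need no attention since we only need the dimension to drop, or rather to stay $\le\ell-t-1$, and in fact a component already of dimension $<\ell-t$ may be kept). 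Such a $\tilde P_{t+1}(a)$ exists because no single point of $Z_t$ can be a common zero of all of $\tilde Q_{\kappa+1}(a),\dots,\tilde Q_{N-\ell+t+1}(a)$ together with $\tilde P_1(a)=\tilde Q_1(a),\dots$: here is where the $N$-subgeneral position hypothesis (part (b)) enters.

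**The key point** making the linear algebra work is the following counting. On a fixed top-dimensional component $W$ of $Z_t$ (dimension $\ell-t$), the functions $\tilde Q_{\kappa+1}(a)|_W,\dots,\tilde Q_{N-\ell+t+1}(a)|_W$ cannot \emph{all} vanish identically, for otherwise $W$ would be contained in $\bigcap_{j\in I}\tilde Q_j(a)\cap V$ with $I=\{1,\dots,t\}\cup\{\kappa+1,\dots,N-\ell+t+1\}$ — wait, more carefully, $W\subset \bigcap_{i\le t}\tilde P_i(a)$ and $\tilde P_i(a)$ for $i>\kappa$ is a combination of $\tilde Q_j(a)$ with $j\le N-\ell+i\le N-\ell+t$; so $W$ lies in the common zero locus of $\tilde Q_1(a),\dots,\tilde Q_\kappa(a)$ and of $\tilde Q_j(a)$ for $j$ in a set of size at most $(N-\ell+t+1)-\kappa$ additional indices, for a total of at most $N-\ell+t+1$ hypersurfaces; but $\dim(\bigcap_{i\in I}\tilde Q_i(a)\cap V)\le N-\sharp I\le N-(N-\ell+t+1)=\ell-t-1<\dim W$, a contradiction. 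Hence on each top-dimensional $W$ at least one of the available $\tilde Q_j(a)$ is not identically zero, so a generic linear combination is nonzero on every such $W$; a standard Zariski-density / avoidance argument over the infinite field $\mathbb C$ then produces coefficients $c_{t+1,j}$ simultaneously good for all the finitely many top-dimensional components, giving $\dim Z_{t+1}\le\ell-t-1$. Iterating until $t=\ell$ yields $\dim\big(\bigcap_{i=1}^{\ell+1}\tilde P_i(a)\cap V\big)\le -1$, i.e. the intersection with $V$ is empty, which is the claim.

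**The main obstacle** I anticipate is bookkeeping the index ranges: one must verify that at step $t$ the batch of $\tilde Q_j(a)$'s allowed for forming $\tilde P_{t+1}(a)$, namely $j\in\{\kappa+1,\dots,N-\ell+t+1\}$, is exactly large enough that the total number of hypersurfaces $\tilde Q_j(a)$ effectively appearing in $\tilde P_1(a),\dots,\tilde P_{t+1}(a)$ is at most $N+1$ (so the $N$-subgeneral position bound of part (b) is applicable) \emph{and} that $N-\ell+t+1\le N+1$, i.e. $t\le\ell$, so we never run past $\tilde Q_{N+1}(a)$; at the final step $t=\ell$ this uses the full list $\tilde Q_1(a),\dots,\tilde Q_{N+1}(a)$. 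One must also double-check that conditions (i)–(iii) are precisely what is needed to make "$\tilde Q_j(a)$" well-defined homogeneous polynomials over $\mathbb C$, to have $V(a)$ of the correct dimension $\ell$ (via Lemma \ref{L2.7.}), and to ensure the subgeneral-position inequalities, which a priori hold for a generic $z$, persist at $z=a$ — this is where the parenthetical remarks in Definition \ref{D1.1} about the exceptional discrete set are invoked. The rest — the genericity of the linear combination on finitely many subvarieties — is routine.
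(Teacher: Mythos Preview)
Your proposal is correct and follows essentially the same approach as the paper: one sets $\tilde P_1(a)=\tilde Q_1(a),\dots,\tilde P_\kappa(a)=\tilde Q_\kappa(a)$, invokes the index-$\kappa$ condition to get $\dim\bigl(\bigcap_{t=1}^\kappa \tilde P_t(a)\cap V(a)\bigr)\le\ell-\kappa$, and then inductively chooses each $\tilde P_{t+1}(a)$ as a generic $\mathbb C$-linear combination of $\tilde Q_{\kappa+1}(a),\dots,\tilde Q_{N-\ell+t+1}(a)$ that avoids all top-dimensional components of the current intersection, the existence of such a combination being guaranteed by the $N$-subgeneral-position bound $\dim\bigl(\bigcap_{j=1}^{N-\ell+t+1}\tilde Q_j(a)\cap V(a)\bigr)\le \ell-t-1$. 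The paper phrases the avoidance step as ``the bad coefficient vectors for each component $\Gamma$ form a proper linear subspace $V_{1\Gamma}\subsetneq\mathbb C^{N-\ell+1}$, and a countable union of proper subspaces cannot exhaust $\mathbb C^{N-\ell+1}$''; your ``generic linear combination over finitely many components'' is the same argument (and in fact the number of irreducible components is finite, so your formulation is slightly cleaner).
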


\begin{proof} We assume that $\tilde{Q}_{i}$ $(1\leq i\leq N+1)$ has the following form
$$\tilde{Q}_{i}=\sum_{I\in\tau_{d}}a_{iI}\omega^{I}.$$
By the definition of the $N$-subgeneral position, there exists a point $a\in\mathbb{C}$ such that the following system of equations
$$\tilde{Q}_{i}(a)(\omega_{0},\ldots,\omega_{n})=0,\,\,\, 1\leq i\leq N+1$$
has only trivial solution $(0,\ldots,0).$ We may assume that $\tilde{Q}_{i}(a)\not\equiv0$ for all $1\leq i\leq N+1.$ \par

For each homogeneous polynomial $\tilde{Q}\in\mathbb{C}[x_{0},\ldots,x_{n}],$ we denote by $D$ the fixed hypersurface in $\mathbb{P}^{n}(\mathbb{C})$ defined by $\tilde{Q}$, i.e.,
$$D=\{(\omega_{0},\ldots,\omega_{n})\in\mathbb{P}^{n}(\mathbb{C})\mid\tilde{Q}(\omega_{0},\ldots,\omega_{n})=0\}.$$
Setting $\tilde{P}_{1}(a)=\tilde{Q}_{1}(a),\ldots,\tilde{P}_{\kappa}(a)=\tilde{Q}_{\kappa}(a),$ we see that
\begin{eqnarray*}
\dim\left(\bigcap^{t}_{i=1}D_{i}(a)\cap V(a)\right)\leq N-t,\,\,\, t=N-\ell+\kappa+1,\ldots, N+1,
\end{eqnarray*}
where $\dim \emptyset$ = -$\infty$.\par

And for any $\kappa$ moving hypersurfaces, we have
$$\dim\left(\bigcap^{\kappa}_{j=1}D_{j}(a)\cap V(a)\right) = \ell-\kappa$$
(On the one hand, it is obvious $\dim\left(\bigcap^{\kappa}_{j=1}D_{j}(a)\cap V\right)\geq \ell-\kappa;$ on the other hand,  according to the definition of $N$-subgeneral position with index $\kappa$, we have $\dim\left(\bigcap^{\kappa}_{j=1}D_{j}(a)\cap V\right)\leq \ell-\kappa$).\par

Step 1. We will construct $\tilde{P}_{\kappa+1}$ as follows. For each irreducible componet $\Gamma$ of dimension $\ell-\kappa$ of $(\bigcap_{i=1}^{\kappa}\tilde{P}_{i}(a)\cap V(a)),$ we put
\begin{eqnarray*}
V_{1\Gamma}&=&\{c=(c_{\kappa+1},\ldots,c_{N-\ell+\kappa+1})\in\mathbb{C}^{N-\ell+1},\Gamma\subset D_{c}(a),\,\,\,\\&& \mbox{where}\,\,\,\tilde{Q}_{c} = \sum^{N-\ell+\kappa+1}_{j=\kappa+1}c_{j}\tilde{Q}_{j}\}.
\end{eqnarray*}
By definition, $V_{1\Gamma}$ is a subspace of $\mathbb{C}^{N-\ell+1}$. Since
$$\dim\left(\bigcap^{N-\ell+\kappa+1}_{i=1}D_{i}(a)\cap V(a)\right)\leq \ell-\kappa-1,$$
there exists $i\in\{\kappa+1,\ldots,N-\ell+\kappa+1\}$ such that $\Gamma\not\subset D_{i}(a).$ This implies that $V_{1\Gamma}$ is a proper subspace of $\mathbb{C}^{N-\ell+1}.$ Since the set of irreducible components of dimension $\ell-\kappa$ of $(\bigcap_{i=1}^{\kappa}\tilde{P}_{i}(a)\cap V(a))$ is at most countable, we have
$$\mathbb{C}^{N-\ell+\kappa}\setminus\bigcup_{\Gamma}V_{1\Gamma}\neq\emptyset.$$
Hence, there exists $(c_{1(\kappa+1)},\ldots,c_{1(N-\ell+\kappa+1)})\in\mathbb{C}^{N-\ell+1}$ such that
$\Gamma\not\subset\tilde{P}_{\kappa+1}(a),$ where $\tilde{P}_{\kappa+1}=\sum^{N-\ell+\kappa+1}_{j=\kappa+1}c_{1j}\tilde{Q}_{j},$ for all irreducible components of dimension $\ell-\kappa$ of $(\bigcap_{i=1}^{\kappa}\tilde{P}_{i}(a)\cap V(a)).$ This clearly implies that $$\dim \left(\bigcap_{i=1}^{\kappa+1}\tilde{P}_{i}(a)\cap V(a)\right)\leq \ell-(\kappa+1).$$\par

Step 2. We will construct $P_{\kappa+2}$ as follows. For each irreducible componet $\Gamma'$ of dimension $\ell-\kappa-1$ of $\left(\bigcap_{i=1}^{\kappa+1}\tilde{P}_{i}(a)\cap V(a)\right),$ we put
\begin{eqnarray*}
V_{2\Gamma'}&=&\{c=(c_{\kappa+1},\ldots,c_{N-\ell+\kappa+2})\in\mathbb{C}^{N-\ell+2},\Gamma'\subset D_{c}(a),\,\,\,\\&&where\,\,\,\tilde{Q}_{c} = \sum^{N-\ell+\kappa+2}_{j=\kappa+1}c_{j}\tilde{Q}_{j}\}.
\end{eqnarray*}
Then $V_{2\Gamma'}$ is a subspace of $\mathbb{C}^{N-\ell+2}$. Since dim$\left(\bigcap^{N-\ell+\kappa+2}_{i=1}D_{i}(a)\cap V(a)\right)\leq \ell-\kappa-2$, there exists $i\in\{\kappa+1,\ldots,N-\ell+\kappa+2\}$ such that $\Gamma'\not\subset D_{i}(a).$ This implies that $V_{2\Gamma'}$ is a proper subspace of $\mathbb{C}^{N-\ell+2}.$ Since the set of irreducible components of dimension $\ell-\kappa-1$ of $\left(\bigcap_{i=1}^{\kappa+1}\tilde{P}_{i}(a)\cap V(a)\right)$ is at most countable, we also have
$$\mathbb{C}^{N-\ell+\kappa+1}\setminus\bigcup_{\Gamma'}V_{2\Gamma'}\neq\emptyset.$$
Hence, there exists $(c_{2(\kappa+1)},\ldots,c_{2(N-\ell+\kappa+2)})\in\mathbb{C}^{N-\ell+2}$ such that $\Gamma'\not\subset \tilde{P}_{\kappa+2}(a),$
where $\tilde{P}_{\kappa+2}=\sum^{N-\ell+\kappa+2}_{j=\kappa+1}c_{2j}\tilde{Q}_{j},$ for all irreducible components of dimension $\ell-\kappa-1$ of $\left(\bigcap_{i=1}^{\kappa+1}\tilde{P}_{i}(a)\cap V(a)\right).$ This clearly implies that $$\dim \left(\bigcap_{i=1}^{\kappa+2}\tilde{P}_{i}(a)\cap V(a)\right)\leq \ell-(\kappa+2).$$\par

Repeat again the above steps, after $(\ell+1-\kappa)$-th step we get the hypersurface $\tilde{P}_{\kappa+1}(a),\ldots,\tilde{P}_{\ell+1}(a)$ satisfying that
$$\dim\left(\bigcap^{t}_{j=1}\tilde{P}_{j}(a)\cap V(a)\right)\leq \ell-t,$$ where $t=\kappa+1,\ldots, \ell+1.$\par

In particular, $\left(\bigcap^{\ell+1}_{j=1}\tilde{P}_{j}(a)\cap V(a)\right)=\emptyset.$ This yields that $\tilde{P}_{1}(a),\ldots,\tilde{P}_{\ell+1}(a)$ are in general position. We complete the proof of the lemma.\end{proof}\par

Since there are only finitely many choice of $N+1$ polynomials from ${\tilde{Q}1,\ldots,\tilde{Q}_{q}}$, the total number of such $\tilde{P}_{j}'s$ is finite, so there exists a constant $C>0,$ for $t=\kappa+1,\ldots,\ell$ and all $z\in\mathbb{C}^m$ (excluding all zeros and poles of all $\tilde{Q}_{j}(f))$, by Lemma \ref{L2.4} we can construct $\tilde{P}_{1}=\tilde{Q}_{1},$ $\ldots,$ $\tilde{P}_{\kappa}=\tilde{Q}_{\kappa},$ $\tilde{P}_{\kappa+1},$ $\ldots,$ $\tilde{P}_{\ell+1}$ from ${\tilde{Q}_{1},\ldots,\tilde{Q}_{N+1}}$ such that
\begin{equation*}
\mid\tilde{P}_{t(z)}(\tilde{f})(z)\mid\leq C\max_{\kappa+1\leq j\leq N-\ell+t}\mid\tilde{Q}_{j(z)}(\tilde{f})(z)\mid=C\mid\tilde{Q}_{(N-\ell+t)(z)}(\tilde{f})(z)\mid
\end{equation*}
for $\kappa+1\leq t\leq \ell,$ and thus,
\begin{equation*}
\lambda_{\tilde{Q}_{(N-\ell+t)(z)}(\tilde{f})(z)} \leq \lambda_{\tilde{P}_{t(z)}(\tilde{f})(z)} + \log h'', h''\in\mathcal{C}_{f},\,\,\,\mbox{for}\,\,\, \kappa+1\leq t\leq \ell.
\end{equation*}
Combing the above inequality with (\ref{E1.5}), we have
\begin{eqnarray*}
&&\sum^{q}_{j=1}\lambda_{\tilde{Q}_{j(z)}(\tilde{f})(z)}\\\nonumber&\leq&\sum^{\kappa}_{j=1}\lambda_{\tilde{Q}_{j(z)}(\tilde{f})(z)}
+\sum^{N-\ell+\kappa}_{j=\kappa+1}\lambda_{\tilde{Q}_{j(z)}(\tilde{f})(z)}+\sum_{N-\ell+\kappa+1}^{N}\lambda_{\tilde{Q}_{j(z)}(\tilde{f})(z)}+\log h'\\\nonumber
&\leq&\sum^{\kappa}_{j=1}\lambda_{\tilde{Q}_{j(z)}(\tilde{f})(z)}+\sum^{N-\ell+\kappa}_{j=\kappa+1}\lambda_{\tilde{Q}_{j(z)}(\tilde{f})(z)}+ \sum^{\ell}_{j=\kappa+1}\lambda_{\tilde{P}_{j(z)}(\tilde{f})(z)}+\log h'',\\
\nonumber&=&\sum^{\ell}_{j=1}\lambda_{\tilde{P}_{j(z)}(\tilde{f})(z)}+\sum^{N-\ell+\kappa}_{j=\kappa+1}\lambda_{\tilde{Q}_{j(z)}(\tilde{f})(z)}+\log h''.\end{eqnarray*}
This gives that
if $N-\ell\leq \kappa,$ we have
\begin{eqnarray} \label{E2.11}\sum^{q}_{j=1}\lambda_{\tilde{Q}_{j(z)}(\tilde{f})(z)}
&\leq&\sum^{\ell}_{j=1}\lambda_{\tilde{P}_{j(z)}(\tilde{f})(z)}+\sum_{j=1}^{N-\ell}\lambda_{\tilde{Q}_{j(z)}(\tilde{f})(z)}+\log h''\\\nonumber
&=&\sum^{\ell}_{j=1}\lambda_{\tilde{P}_{j(z)}(\tilde{f})(z)}+\sum^{N-\ell}_{j=1}\lambda_{\tilde{P}_{j(z)}(\tilde{f})(z)}+\log h'',
\end{eqnarray}
and if $N-\ell\geq \kappa$, we get \begin{eqnarray}\label{E2.12}\sum^{q}_{j=1}\lambda_{\tilde{Q}_{j(z)}(\tilde{f})(z)}
&\leq&\sum^{\ell}_{j=1}\lambda_{\tilde{P}_{j(z)}(\tilde{f})(z)}+\sum_{j=1}^{N-\ell}\lambda_{\tilde{Q}_{j(z)}(\tilde{f})(z)}+\log h''\\\nonumber&\leq&\sum^{\ell}_{j=1}\lambda_{\tilde{P}_{j(z)}(\tilde{f})(z)}+\frac{N-\ell}{\kappa}\sum^{\kappa}_{j=1}\lambda_{\tilde{Q}_{j(z)}(\tilde{f})(z)}+\log h'''
\\\nonumber&=&\sum^{\ell}_{j=1}\lambda_{\tilde{P}_{j(z)}(\tilde{f})(z)}+\frac{N-\ell}{\kappa}\sum^{\kappa}_{j=1}\lambda_{\tilde{P}_{j(z)}(\tilde{f})(z)}+\log h''',
\end{eqnarray}where
$h'''\in\mathcal{C}_{f}.$ Hence, by (\ref{E2.11}) and (\ref{E2.12}), we get
\begin{eqnarray}\label{E2.13}
&&\sum^{q}_{j=1}\lambda_{\tilde{Q}_{j(z)}(\tilde{f})(z)}\\\nonumber&\leq&\sum^{\ell}_{j=1}\lambda_{\tilde{P}_{j(z)}(\tilde{f})(z)}+\frac{N-\ell}{\max\{1,\min\{N-\ell,\kappa\}\}}\sum^{\ell}_{j=1}\lambda_{\tilde{P}_{j(z)}(\tilde{f})(z)}+\log h^*\\\nonumber&=&\left(1+\frac{N-\ell}{\max\{1,\min\{N-\ell,\kappa\}\}}\right)\sum^{\ell}_{j=1}\lambda_{\tilde{P}_{j(z)}(\tilde{f})(z)}+\log h^*,
\end{eqnarray}where $h^*=\max\{h'', h'''\}\in\mathcal{C}_{f}.$\par

Fix a basis $\{[\phi_{1}],\ldots,[\phi_{M}]\}$ of $V_{L}$ with $[\phi_{1}],\ldots,[\phi_{M}]\in \mathcal{K}_{\mathcal{Q}}[x_{0},\ldots,x_{n}],$ and let
$$F=[\phi_{1}(\tilde{f}),\ldots,\phi_{M}(\tilde{f})]:\mathbb{C}\rightarrow\mathbb{P}^{M-1}(\mathbb{C}).$$
Since $\tilde{f}$ satisfies $P(\tilde{f})\not\equiv0$ for all homogeneous polynomials $P\in \mathcal{K}_{\mathcal{Q}}[x_{0},\ldots,x_{n}]/$ $I_{\mathcal{K} _{\mathcal{Q}}}(V) $, $F$ is linearly nondegenerate over $\mathcal{K}_{\mathcal{Q}}.$ We have
\begin{eqnarray}\label{E3.1}
T_{F}(r)=LT_{f}(r)+o(T_{f}(r)).
\end{eqnarray}\par
For every positive integer $L$ divided by $d,$ we use the following filtration of the vector space $V_{L}$ with respect to $\tilde{P}_{1(z)},\ldots,\tilde{P}_{\ell(z)}.$ This is a generalization of Corvaja-Zannier¡¯s filtration \cite{Co}, see in the two references \cite{Ru1, C, Y}.\par

Arrange, by the lexicographic order, the $\ell$-tuples $\mathbf{i}= (i_{1},\ldots,i_{\ell})$ of non-negative integers and set $\|\mathbf{i}\|=\sum_{j}i_{j}.$\par

\begin{definition}\cite{Ru1, C, Y}(i) For each $\mathbf{i}\in \mathbf{Z}^{\ell}_{\geq0}$ and non-negative integer $L$ with $L\geq d\|\mathbf{i}\|,$ denote by $I^{\mathbf{i}}_{L}$ the subspace of $\mathcal{K}_{\mathcal{Q}}[x_{0},\ldots,x_{n}]_{L-d\|\mathbf{i}\|}$ consisting of all $\gamma\in \mathcal{K}_{\mathcal{Q}}[x_{0},\ldots,x_{n}]_{L-d\|\mathbf{i}\|}$ such that
$$\tilde{P}^{i_{1}}_{1(z)}\cdots\tilde{P}^{i_{\ell}}_{\ell(z)}\gamma-\sum_{\textbf{e}=(e_{1},\ldots,e_{\ell})>\mathbf{i}}\tilde{P}^{e_{1}}_{1(z)}\cdots\tilde{P}^{e_{\ell}}_{\ell(z)}\gamma_{e}\in I_{\mathcal{K}_{\mathcal{Q}}}(V)_{L}$$
\,\,\, (or$[\tilde{P}^{i_{1}}_{1(z)}\cdots\tilde{P}^{i_{\ell}}_{\ell(z)}\gamma]=[\sum_{\textbf{e}=(e_{1},\ldots,e_{\ell})>\mathbf{i}}\tilde{P}^{e_{1}}_{1(z)}\cdots\tilde{P}^{e_{\ell}}_{\ell(z)}\gamma_{e}]$ on $V_{L}$)\\ for some $\gamma_{e}\in$ $\mathcal{K}_{\mathcal{Q}}[x_{0}, \ldots, x_{n}]_{L-d\|\textbf{e}\|}.$\\
(ii) Denote by $I^{\mathbf{i}}$ the homogeneous ideal in $\mathcal{K}_{\mathcal{Q}}[x_{0},\ldots,x_{n}]$ generated by $\bigcup_{L\geq d\|\mathbf{i}\|}I^{\mathbf{i}}_{L}.$
\end{definition}

\begin{remark}
\label{R2.6} \cite{Ru1, C, Y} From this definition, we have the following properties.\par

(i). $(I_{\mathcal{K}_{\mathcal{Q}}}(V),\tilde{P}_{1(z)}\cdots\tilde{P}_{\ell(z)})_{L-d\|\mathbf{i}\|}\subset I^{\mathbf{i}}_{L}\subset \mathcal{K}_{\mathcal{Q}}[x_{0},\ldots,x_{n}]_{L-d\|\mathbf{i}\|},$ where we denote by
$(I_{\mathcal{K}_{\mathcal{Q}}}(V),\tilde{P}_{1(z)}\cdots\tilde{P}_{\ell(z)})$ the ideal in $\mathcal{K}_{\mathcal{Q}}[x_{0},\ldots,x_{n}]$ generated by $I_{\mathcal{K}_{\mathcal{Q}}}(V)\cup\{{\tilde{P}_{1(z)}\cdots\tilde{P}_{\ell(z)}}\}.$\par

(ii). $I^{\mathbf{i}}\cap \mathcal{K}_{\mathcal{Q}}[x_{0},\ldots,x_{n}]_{L-d\|\mathbf{i}\|}=I^{\mathbf{i}}_{L}.$\par

(iii). $\frac{\mathcal{K}_{\mathcal{Q}}[x_{0},\ldots,x_{n}]}{I^{\mathbf{i}}}$ is a graded module over $\mathcal{K}_{\mathcal{Q}}[x_{0},\ldots,x_{n}].$\par

(iv). If $\mathbf{i}_{1}-\mathbf{i}_{2}:=(i_{1,1}-i_{2,1},\ldots,i_{1,\ell}-i_{2,\ell})\in\mathbf{Z}^{\ell}_{\geq0},$ then $I^{\mathbf{i}_{2}}_{L}\subset I^{\mathbf{i}_{1}}_{L+d\|\mathbf{i}_{1}\|-d\|\mathbf{i}_{2}\|}.$ Hence $I^{\mathbf{i}_{2}}\subset I^{\mathbf{i}_{1}}.$
\end{remark}

\begin{lemma}\label{L2.9.}\cite{Ru1, C, Y}
$\{I^{\mathbf{i}}|\mathbf{i}\in\mathbf{Z}^{\ell}_{\geq0}\}$ is finite set.
\end{lemma}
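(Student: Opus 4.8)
The plan is to show that the family $\{I^{\mathbf{i}}\mid \mathbf{i}\in\mathbf{Z}^{\ell}_{\geq 0}\}$ of homogeneous ideals in $\mathcal{K}_{\mathcal{Q}}[x_{0},\ldots,x_{n}]$ takes only finitely many distinct values, by exploiting the monotonicity recorded in Remark~\ref{R2.6}(iv): if $\mathbf{i}_{1}\geq\mathbf{i}_{2}$ coordinatewise, then $I^{\mathbf{i}_{2}}\subseteq I^{\mathbf{i}_{1}}$. Thus the assignment $\mathbf{i}\mapsto I^{\mathbf{i}}$ is order-preserving from the poset $(\mathbf{Z}^{\ell}_{\geq 0},\leq)$ into the lattice of homogeneous ideals ordered by inclusion. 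First I would invoke Dickson's lemma (equivalently, that $\mathbf{Z}^{\ell}_{\geq 0}$ is a well-quasi-order, or that the polynomial ring is Noetherian so the ascending chain condition holds): any subset of $\mathbf{Z}^{\ell}_{\geq 0}$ has only finitely many minimal elements. Since $\mathcal{K}_{\mathcal{Q}}[x_{0},\ldots,x_{n}]$ is Noetherian, the union $\bigcup_{\mathbf{i}} I^{\mathbf{i}}$ is generated by finitely many homogeneous elements, each of which lies in some $I^{\mathbf{i}_{(s)}}$; collecting these finitely many multi-indices $\mathbf{i}_{(1)},\ldots,\mathbf{i}_{(r)}$ and using (iv), every $I^{\mathbf{i}}$ equals the ideal generated by those $I^{\mathbf{i}_{(s)}}$ with $\mathbf{i}_{(s)}\leq\mathbf{i}$ together with the obvious containments, so there are at most $2^{r}$ possibilities.

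More carefully, the cleaner route is the following. For each fixed degree slot, consider the chain structure directly: by (iv), along any coordinate direction $I^{\mathbf{i}}$ is nondecreasing, so for $\mathbf{i}$ with all coordinates large the ideals stabilize by the ascending chain condition in the Noetherian ring $\mathcal{K}_{\mathcal{Q}}[x_{0},\ldots,x_{n}]$. Precisely, I would argue by induction on $\ell$: for $\ell=1$ the chain $I^{(0)}\subseteq I^{(1)}\subseteq I^{(2)}\subseteq\cdots$ stabilizes, giving finitely many ideals. For the inductive step, fix all but the last coordinate; each such restricted chain stabilizes after a bound that, by the ACC applied to the (finitely many, by induction applied to the "stable tails") relevant ideals, can be taken uniform in the remaining $\ell-1$ coordinates. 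Hence there is an $N_{0}$ such that $I^{\mathbf{i}}$ depends only on $\min(\mathbf{i},N_{0}\cdot\mathbf{1})$ componentwise, and there are at most $(N_{0}+1)^{\ell}$ such truncations.

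The main obstacle — and the only genuinely non-formal point — is establishing the uniformity of the stabilization bound across the infinitely many "slices." A bare application of ACC to one chain does not by itself produce a bound independent of the fixed coordinates. The way around this is to package all the ideals together: set $J:=\bigcup_{\mathbf{i}\in\mathbf{Z}^{\ell}_{\geq 0}} I^{\mathbf{i}}$, which by (iv) is a homogeneous ideal (any two elements lie in a common $I^{\mathbf{i}}$ by taking coordinatewise maxima), hence finitely generated since the ring is Noetherian. Choose homogeneous generators $g_{1},\ldots,g_{r}$ of $J$; each $g_{s}\in I^{\mathbf{i}_{(s)}}$ for some $\mathbf{i}_{(s)}$. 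Let $\mathbf{i}^{*}$ be the coordinatewise maximum of $\mathbf{i}_{(1)},\ldots,\mathbf{i}_{(r)}$. Then for any $\mathbf{i}\geq\mathbf{i}^{*}$ we have, by (iv), $I^{\mathbf{i}_{(s)}}\subseteq I^{\mathbf{i}}$ for all $s$, hence $J\subseteq I^{\mathbf{i}}\subseteq J$, so $I^{\mathbf{i}}=J$ is constant on the "upper box." A general $\mathbf{i}$ then satisfies, again by (iv), that $I^{\mathbf{i}}$ lies between $I^{\mathbf{i}\wedge\mathbf{i}^{*}}$ and $I^{\mathbf{i}\vee\mathbf{i}^{*}}=J$; more to the point, replacing each coordinate $i_{t}$ by $\min(i_{t},i^{*}_{t})$ does not change $I^{\mathbf{i}}$, because increasing a coordinate past $i^{*}_{t}$ already lands us in $J$ on that axis together with the monotonicity forcing equality. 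Therefore $I^{\mathbf{i}}$ is determined by the finitely many truncated multi-indices in $\prod_{t=1}^{\ell}\{0,1,\ldots,i^{*}_{t}\}$, and the set $\{I^{\mathbf{i}}\mid\mathbf{i}\in\mathbf{Z}^{\ell}_{\geq 0}\}$ is finite. This is exactly the argument of Ru~\cite{Ru1} carried over to the coefficient field $\mathcal{K}_{\mathcal{Q}}$, which changes nothing since $\mathcal{K}_{\mathcal{Q}}[x_{0},\ldots,x_{n}]$ is still a Noetherian polynomial ring over a field.
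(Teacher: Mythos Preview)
The paper does not prove this lemma; it merely cites \cite{Ru1, C, Y}. So there is no in-paper proof to compare against, and your attempt must stand on its own.

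Your proof has a genuine gap in the final step. After establishing that $I^{\mathbf{i}}=J$ for all $\mathbf{i}\geq\mathbf{i}^{*}$ (which is correct), you assert that ``replacing each coordinate $i_{t}$ by $\min(i_{t},i^{*}_{t})$ does not change $I^{\mathbf{i}}$.'' This is not justified, and in fact there is no reason it should hold. Take $\ell=2$ and $\mathbf{i}^{*}=(3,3)$: your claim would force $I^{(5,1)}=I^{(3,1)}$, but monotonicity only gives $I^{(3,1)}\subseteq I^{(5,1)}\subseteq J$, and nothing prevents $I^{(5,1)}$ from being strictly larger than $I^{(3,1)}$. The phrase ``lands us in $J$ on that axis'' conflates stabilization of the \emph{full} multi-index with stabilization along a single coordinate direction while the others are held below $\mathbf{i}^{*}$. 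The same objection applies to your first paragraph: knowing that $g_{1},\dots,g_{r}$ generate $J$ does not imply that each $I^{\mathbf{i}}$ is generated by a \emph{subset} of the $g_{s}$, so the ``at most $2^{r}$ possibilities'' count is unsupported.

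The clean argument you gesture at but never execute is simply this: suppose there were infinitely many distinct ideals $I^{\mathbf{i}_{1}},I^{\mathbf{i}_{2}},\ldots$. Since $(\mathbf{Z}^{\ell}_{\geq 0},\leq)$ is a well-quasi-order (Dickson's lemma), the sequence $(\mathbf{i}_{k})$ contains an infinite coordinatewise nondecreasing subsequence $\mathbf{i}_{k_{1}}\leq\mathbf{i}_{k_{2}}\leq\cdots$. By Remark~\ref{R2.6}(iv) this yields a strictly ascending chain $I^{\mathbf{i}_{k_{1}}}\subsetneq I^{\mathbf{i}_{k_{2}}}\subsetneq\cdots$ in the Noetherian ring $\mathcal{K}_{\mathcal{Q}}[x_{0},\ldots,x_{n}]$, a contradiction. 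This is the argument in the cited references; it avoids the uniformity problem entirely and does not require the truncation claim.
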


Denote by
\begin{eqnarray}\label{E1.7}
&&\Delta^{\mathbf{i}}_{L}:=dim_{\mathcal{K}_{\mathcal{Q}}}\frac{\mathcal{K}_{\mathcal{Q}}[x_{0},\ldots,x_{n}]_{L-d\|\mathbf{i}\|}}{I^{\mathbf{i}}_{L}}.
\end{eqnarray}

\begin{lemma}\label{L2.10.}\cite{Ru1, C, Y}
(i). There exists a positive integer  $L_{0}$ such that, for each  $\mathbf{i}\in\mathbf{Z}^{\ell}_{\geq0},$ $\Delta^{\mathbf{i}}_{L}$ is independent of  $L$  for all  $L$  satisfying  $L-d\|\mathbf{i}\|>L_{0}.$\par

(ii). There is an integer  $\bar{\Delta}$  such that  $\Delta^{\mathbf{i}}_{L}\leq\bar{\Delta}$ for all  $\mathbf{i}\in\mathbf{Z}^{\ell}_{\geq0}$  and  $L$  satisfying  $L-d\|\mathbf{i}\|>0.$
\end{lemma}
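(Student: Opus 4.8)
The plan is to establish both assertions by combining the finiteness of the family $\{I^{\mathbf{i}}\mid\mathbf{i}\in\mathbf{Z}^{\ell}_{\geq0}\}$ (Lemma \ref{L2.9.}) with the basic stabilization result for Hilbert functions of graded modules (the module-theoretic version of Lemma \ref{L2.2}). For part (i), fix $\mathbf{i}\in\mathbf{Z}^{\ell}_{\geq0}$ and consider the graded module $\mathcal{K}_{\mathcal{Q}}[x_{0},\dots,x_{n}]/I^{\mathbf{i}}$ over the polynomial ring (Remark \ref{R2.6}(iii)). By Remark \ref{R2.6}(ii), $\Delta^{\mathbf{i}}_{L}$ is exactly the value at graded degree $L-d\|\mathbf{i}\|$ of the Hilbert function of this module. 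By the theory of Hilbert polynomials, this Hilbert function agrees with a polynomial in the degree variable once that variable exceeds some bound $L_{0}(\mathbf{i})$, and that polynomial is either identically zero (when the associated subvariety is empty or zero-dimensional in the relevant sense) or has positive degree. I would first argue that for each $\mathbf{i}$ the relevant subvariety $V_{L}/I^{\mathbf{i}}_{L}$ has dimension $0$ — because $\tilde P_{1(z)}\cdots\tilde P_{\ell(z)}$ together with $I_{\mathcal{K}_{\mathcal{Q}}}(V)$ cut out a zero-dimensional (indeed empty) set in $V$ by Lemma \ref{L2.4}, so by Remark \ref{R2.6}(i) the ideal $I^{\mathbf{i}}$ contains $(I_{\mathcal{K}_{\mathcal{Q}}}(V),\tilde P_{1(z)}\cdots\tilde P_{\ell(z)})$ — hence the Hilbert polynomial is a constant and $\Delta^{\mathbf{i}}_{L}$ is eventually constant in $L$. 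The key point is then to make $L_{0}$ \emph{uniform in $\mathbf{i}$}: since by Lemma \ref{L2.9.} there are only finitely many distinct ideals $I^{\mathbf{i}}$, there are only finitely many distinct Hilbert functions arising, so one can take $L_{0}$ to be the maximum of the finitely many individual stabilization thresholds. One must check that the shift $L-d\|\mathbf{i}\|$ does not spoil this: the Hilbert function of $\mathcal{K}_{\mathcal{Q}}[x_{0},\dots,x_{n}]/I^{\mathbf{i}}$ stabilizes past a fixed degree $\delta(I^{\mathbf{i}})$, and $\Delta^{\mathbf{i}}_{L}$ is its value at $L-d\|\mathbf{i}\|$, so $\Delta^{\mathbf{i}}_{L}$ stabilizes as soon as $L-d\|\mathbf{i}\|>\max_{\mathbf{i}}\delta(I^{\mathbf{i}})=:L_{0}$, which is exactly the claimed form.

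For part (ii), I would produce a single bound $\bar\Delta$ valid for all $\mathbf{i}$ and all $L$ with $L-d\|\mathbf{i}\|>0$. On the stable range $L-d\|\mathbf{i}\|>L_{0}$, part (i) already gives $\Delta^{\mathbf{i}}_{L}\le\max_{\mathbf{i}}\Delta^{\mathbf{i}}_{L_{0}+d\|\mathbf{i}\|+1}$, a maximum over the finitely many distinct ideals $I^{\mathbf{i}}$, hence finite. On the remaining range $0<L-d\|\mathbf{i}\|\le L_{0}$, we simply bound $\Delta^{\mathbf{i}}_{L}\le\dim_{\mathcal{K}_{\mathcal{Q}}}\mathcal{K}_{\mathcal{Q}}[x_{0},\dots,x_{n}]_{L-d\|\mathbf{i}\|}=\binom{L-d\|\mathbf{i}\|+n}{n}\le\binom{L_{0}+n}{n}$, using that a quotient space has dimension at most that of the ambient space. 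Taking $\bar\Delta$ to be the maximum of these two finite quantities finishes the argument.

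The main obstacle I anticipate is the uniformity-in-$\mathbf{i}$ issue in part (i): a priori the stabilization degree of a Hilbert function depends on the ideal, and $\mathbf{i}$ ranges over an infinite set, so the argument genuinely requires invoking Lemma \ref{L2.9.} to collapse this to finitely many cases, and one must be careful that the degree shift by $d\|\mathbf{i}\|$ interacts correctly — i.e. that the threshold one needs is on $L-d\|\mathbf{i}\|$ and not on $L$ itself. A secondary technical point is verifying rigorously that each $I^{\mathbf{i}}$ really does contain enough of $(I_{\mathcal{K}_{\mathcal{Q}}}(V),\tilde P_{1(z)}\cdots\tilde P_{\ell(z)})$ for the relevant quotient to be Artinian (zero-dimensional), which is where the geometric input of Lemma \ref{L2.4} — namely $\bigcap_{t=1}^{\ell+1}\tilde P_{t}(a)\cap V=\emptyset$, equivalently the product $\tilde P_{1(z)}\cdots\tilde P_{\ell(z)}$ together with $V$ has empty or finite common locus — is used; everything else is routine commutative algebra and dimension counting.
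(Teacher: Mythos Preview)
The paper does not give its own proof of this lemma: it is stated with the citation \cite{Ru1, C, Y} and used as a black box, so there is no in-paper argument to compare against. Your plan is essentially the standard argument found in those references (Hilbert polynomial of the graded module $\mathcal{K}_{\mathcal{Q}}[x_0,\ldots,x_n]/I^{\mathbf{i}}$, zero-dimensionality forcing the Hilbert polynomial to be a constant, then uniformity in $\mathbf{i}$ via the finiteness of Lemma~\ref{L2.9.}), and is the right approach.

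Two small imprecisions in your write-up are worth cleaning up. First, in Remark~\ref{R2.6}(i) the notation $\tilde P_{1(z)}\cdots\tilde P_{\ell(z)}$ should be read as the \emph{list} $\tilde P_{1(z)},\ldots,\tilde P_{\ell(z)}$ (so the ideal is $(I_{\mathcal{K}_{\mathcal{Q}}}(V),\tilde P_{1(z)},\ldots,\tilde P_{\ell(z)})$), not the single product polynomial; your final parenthetical explicitly calls it ``the product'', which would give the \emph{union} of the divisors and would \emph{not} cut down to a zero-dimensional locus in $V$. Second, the intersection $\bigcap_{t=1}^{\ell}\tilde P_t(a)\cap V$ is only zero-dimensional (finite), not empty --- emptiness in Lemma~\ref{L2.4} uses all $\ell+1$ polynomials, whereas the filtration and the ideals $I^{\mathbf{i}}$ involve only $\tilde P_{1(z)},\ldots,\tilde P_{\ell(z)}$. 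Zero-dimensionality is exactly what you need for the Hilbert polynomial to be a constant, so the argument still goes through; just drop the ``(indeed empty)'' and the word ``product''.
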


Set $\Delta_{0}:=\min_{\mathbf{i}\in\mathbf{Z}^{\ell}_{\geq0}}\Delta^{\mathbf{i}}=\Delta^{\mathbf{i}_{0}}$ for some $\mathbf{i}_{0}\in\mathbf{Z}^{\ell}_{\geq0}.$\par

\begin{remark}\label{R2.10.}\cite{Ru1, C, Y}
By  (iv) of  Remark \ref{R2.6}, if $\mathbf{i}-\mathbf{i}_{0}\in\mathbf{Z}^{\ell}_{\geq0},$ then $\Delta^{\mathbf{i}}\leq\Delta^{\mathbf{i}_{0}}.$
\end{remark}

Now, for an integer $L$ big enough, divisible by $d$, we construct the following filtration of $V_{L}$ with respect to $\{{\tilde{P}_{1(z)}\cdots\tilde{P}_{\ell(z)}}\}.$
Denote by $\tau_{L}$ the set of $\mathbf{i}\in\mathbf{Z}^{\ell}_{\geq0},$ with $L-d\|\mathbf{i}\|>0.$ arranged by the lexicographic order.
Define the spaces $W_{\mathbf{i}}=W_{L,\mathbf{i}}$ by
$$W_{\mathbf{i}}=\sum_{\textbf{e}>\mathbf{i}}\tilde{P}^{e_{1}}_{1(z)}\cdots\tilde{P}^{e_{\ell}}_{\ell(z)}\cdot \mathcal{K}_{\mathcal{Q}}[x_{0},\ldots,x_{n}] _{L-d\|\textbf{e}\|}.$$
Plainly $W_{(0,\ldots,0)}=\mathcal{K}_{\mathcal{Q}}[x_{0},\ldots,x_{n}]_{L}$ and $W_{\mathbf{i}}\supset W_{\mathbf{i}'}$ if $\mathbf{i}'>\mathbf{i},$ so $\{{W_{\mathbf{i}}}\}$ is a filtration of $\mathcal{K}_{\mathcal{Q}}[x_{0},\ldots,x_{n}]_{L}.$ Set $W^{\ast}_{\mathbf{i}}=\{[g]\in V_{L}|g\in W_{\mathbf{i}}\}.$ Hence,$\{{W^{\ast}_{\mathbf{i}}}\}$ is a filtration of $V_{L}.$\par

\begin{lemma}\label{L2.8}\cite{Ru1, C, Y}
Suppose that $\mathbf{i}'$ follows $\mathbf{i}$ in the lexicographic order, then
$$\frac{W^{\ast}_{\mathbf{i}}}{W^{\ast}_{\mathbf{i}'}}\cong\frac{\mathcal{K}_{\mathcal{Q}}[x_{0},\ldots,x_{n}]_{L-d\|\mathbf{i}\|}}{I^{\mathbf{i}}_{L}}.$$
\end{lemma}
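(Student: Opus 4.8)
\textbf{Proof proposal for Lemma \ref{L2.8}.}

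The plan is to exhibit an explicit $\mathcal{K}_{\mathcal{Q}}$-linear surjection from $\mathcal{K}_{\mathcal{Q}}[x_{0},\ldots,x_{n}]_{L-d\|\mathbf{i}\|}$ onto the quotient $W^{\ast}_{\mathbf{i}}/W^{\ast}_{\mathbf{i}'}$ whose kernel is precisely $I^{\mathbf{i}}_{L}$, and then invoke the first isomorphism theorem. First I would define the map
\[
\Phi:\ \mathcal{K}_{\mathcal{Q}}[x_{0},\ldots,x_{n}]_{L-d\|\mathbf{i}\|}\ \longrightarrow\ \frac{W^{\ast}_{\mathbf{i}}}{W^{\ast}_{\mathbf{i}'}},\qquad
\Phi(\gamma)\ =\ \bigl[\tilde{P}^{i_{1}}_{1(z)}\cdots\tilde{P}^{i_{\ell}}_{\ell(z)}\,\gamma\bigr]\ \bmod\ W^{\ast}_{\mathbf{i}'}.
\]
This is well-defined because $\tilde{P}^{i_{1}}_{1(z)}\cdots\tilde{P}^{i_{\ell}}_{\ell(z)}\gamma$ has degree $L$ and clearly lies in $W_{\mathbf{i}}$ (it is the $\mathbf{e}=\mathbf{i}$ term of the defining sum for $W_{\mathbf{i}}$, and $W_{\mathbf{i}}$ contains $\tilde{P}^{i_{1}}_{1(z)}\cdots\tilde{P}^{i_{\ell}}_{\ell(z)}\cdot\mathcal{K}_{\mathcal{Q}}[x_{0},\ldots,x_{n}]_{L-d\|\mathbf{i}\|}$ by the same reasoning applied to the filtration indexed so that $W_{\mathbf{i}}$ absorbs the $\mathbf{i}$-th graded piece; I will make this bookkeeping precise by noting $W_{\mathbf{i}}=\tilde{P}^{i_{1}}_{1(z)}\cdots\tilde{P}^{i_{\ell}}_{\ell(z)}\cdot\mathcal{K}_{\mathcal{Q}}[x_{0},\ldots,x_{n}]_{L-d\|\mathbf{i}\|}+W_{\mathbf{i}'}$).

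Next I would check surjectivity. Any element of $W^{\ast}_{\mathbf{i}}$ is the class $[g]$ of some $g=\sum_{\mathbf{e}\geq\mathbf{i}}\tilde{P}^{e_{1}}_{1(z)}\cdots\tilde{P}^{e_{\ell}}_{\ell(z)}\gamma_{\mathbf{e}}$ with $\gamma_{\mathbf{e}}\in\mathcal{K}_{\mathcal{Q}}[x_{0},\ldots,x_{n}]_{L-d\|\mathbf{e}\|}$; separating the $\mathbf{e}=\mathbf{i}$ term, the remaining terms lie in $W_{\mathbf{i}'}$, so $[g]\equiv\Phi(\gamma_{\mathbf{i}})$ modulo $W^{\ast}_{\mathbf{i}'}$. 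Then I would identify the kernel: $\Phi(\gamma)=0$ means $\tilde{P}^{i_{1}}_{1(z)}\cdots\tilde{P}^{i_{\ell}}_{\ell(z)}\gamma\in W_{\mathbf{i}'}+I_{\mathcal{K}_{\mathcal{Q}}}(V)_{L}$, i.e. modulo $I_{\mathcal{K}_{\mathcal{Q}}}(V)_{L}$ it equals $\sum_{\mathbf{e}>\mathbf{i}}\tilde{P}^{e_{1}}_{1(z)}\cdots\tilde{P}^{e_{\ell}}_{\ell(z)}\gamma_{\mathbf{e}}$, which is exactly the defining condition for $\gamma\in I^{\mathbf{i}}_{L}$ in Definition 2.7(i). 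Hence $\ker\Phi=I^{\mathbf{i}}_{L}$, and the first isomorphism theorem gives the claimed isomorphism.

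The only genuinely delicate point is verifying that the two descriptions of $W_{\mathbf{i}}$ agree — that peeling off the leading index $\mathbf{i}$ leaves a remainder in $W_{\mathbf{i}'}$ — which hinges on $\mathbf{i}'$ being the immediate successor of $\mathbf{i}$ in the lexicographic order, so that $\{\mathbf{e}:\mathbf{e}>\mathbf{i}\}=\{\mathbf{e}:\mathbf{e}\geq\mathbf{i}'\}$ among multi-indices in $\tau_L$; once this combinatorial fact about the lexicographic order is isolated, the rest is formal. I would also remark that everything is happening inside the graded pieces of fixed degree $L$, so there is no subtlety about degrees, and that the argument is the moving-target analogue of the corresponding step in \cite{Ru1, C, Y}, the only change being that the ground field is $\mathcal{K}_{\mathcal{Q}}$ rather than $\mathbb{C}$ and the polynomials $\tilde{P}_{j(z)}$ have coefficients in $\mathcal{K}_{\mathcal{Q}}$.
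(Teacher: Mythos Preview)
Your argument is correct and is exactly the standard one: the paper does not supply its own proof of this lemma but simply cites \cite{Ru1, C, Y}, and the proof in those references proceeds precisely by defining the multiplication-by-$\tilde{P}^{\mathbf{i}}$ map, checking surjectivity by peeling off the leading term, and identifying the kernel with $I^{\mathbf{i}}_{L}$ via the defining relation in Definition~3.4(i). One small caveat: the paper's displayed definition of $W_{\mathbf{i}}$ has ``$\mathbf{e}>\mathbf{i}$'' where it should read ``$\mathbf{e}\geq\mathbf{i}$'' (otherwise $W_{(0,\ldots,0)}\neq\mathcal{K}_{\mathcal{Q}}[x_{0},\ldots,x_{n}]_{L}$); you have tacitly used the correct $\geq$ convention, which is what makes the decomposition $W_{\mathbf{i}}=\tilde{P}^{\mathbf{i}}\cdot\mathcal{K}_{\mathcal{Q}}[x_{0},\ldots,x_{n}]_{L-d\|\mathbf{i}\|}+W_{\mathbf{i}'}$ and hence the surjectivity step work.
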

Combining with the notation (\ref{E1.7}), we have
$$\dim\frac{W^{\ast}_{\mathbf{i}}}{W^{\ast}_{\mathbf{i}'}}=\Delta^{\mathbf{i}}_{L}.$$\\
Set
$$\tau^{0}_{L}=\{\mathbf{i}\in\tau_{L}|L-d\|\mathbf{i}\|>L_{0}. and \mathbf{i}-\mathbf{i}_{0}\in\mathbf{Z}^{\ell}_{\geq0}\}.$$
We have the following properties.
\begin{lemma}\label{L2.17.}\cite{Ru1, C, Y}
(i). $\Delta_{0}=\Delta^{\mathbf{i}}$ for all $\mathbf{i}\in\tau^{0}_{L}.$\par

(ii). $\sharp\tau^{0}_{L}=\frac{1}{d^{\ell}}\frac{L^{\ell}}{\ell!}+O(L^{\ell}-1).$\par

(iii). $\Delta^{\mathbf{i}}_{L}=\Delta d^{\ell}$ for all $\mathbf{i}\in\tau^{0}_{L}.$
\end{lemma}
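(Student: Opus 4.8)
The plan is to establish the three assertions in turn: (i) and (ii) are short, and (iii) follows by comparing two evaluations of $\dim_{\mathcal{K}_{\mathcal{Q}}}V_{L}$. For (i), observe that if $\mathbf{i}\in\tau^{0}_{L}$ then $\mathbf{i}-\mathbf{i}_{0}\in\mathbf{Z}^{\ell}_{\geq0}$ by the very definition of $\tau^{0}_{L}$, so Remark \ref{R2.10.} gives $\Delta^{\mathbf{i}}\leq\Delta^{\mathbf{i}_{0}}=\Delta_{0}$; since also $\Delta_{0}=\min_{\mathbf{j}\in\mathbf{Z}^{\ell}_{\geq0}}\Delta^{\mathbf{j}}\leq\Delta^{\mathbf{i}}$, we conclude $\Delta^{\mathbf{i}}=\Delta_{0}$.

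For (ii), this is a lattice-point count. By definition $\tau^{0}_{L}$ is the set of $\mathbf{i}\in\mathbf{Z}^{\ell}_{\geq0}$ with $\mathbf{i}\geq\mathbf{i}_{0}$ coordinatewise and $\|\mathbf{i}\|<(L-L_{0})/d$; substituting $\mathbf{j}=\mathbf{i}-\mathbf{i}_{0}\in\mathbf{Z}^{\ell}_{\geq0}$ turns this into $\|\mathbf{j}\|<t$, where $t:=(L-L_{0})/d-\|\mathbf{i}_{0}\|$. The number of such $\mathbf{j}$ is $\frac{t^{\ell}}{\ell!}+O(t^{\ell-1})$, and since $L_{0}$ and $\mathbf{i}_{0}$ are fixed one has $t=L/d+O(1)$, so $\sharp\tau^{0}_{L}=\frac{1}{d^{\ell}}\frac{L^{\ell}}{\ell!}+O(L^{\ell-1})$. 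The identical count without the shift $\mathbf{i}_{0}$ gives $\sharp\tau_{L}=\frac{1}{d^{\ell}}\frac{L^{\ell}}{\ell!}+O(L^{\ell-1})$ as well; in particular $\sharp(\tau_{L}\setminus\tau^{0}_{L})=O(L^{\ell-1})$, a fact that will be needed in (iii).

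For (iii), I would evaluate $\dim_{\mathcal{K}_{\mathcal{Q}}}V_{L}$ through the filtration $\{W^{\ast}_{\mathbf{i}}\}$ of $V_{L}$. Since $W^{\ast}_{(0,\ldots,0)}=V_{L}$ and $W^{\ast}_{\mathbf{i}}=0$ once $L-d\|\mathbf{i}\|\leq0$, summing the successive quotient dimensions using Lemma \ref{L2.8} and the notation (\ref{E1.7}) yields
$$\dim_{\mathcal{K}_{\mathcal{Q}}}V_{L}=\sum_{\mathbf{i}\in\tau_{L}}\Delta^{\mathbf{i}}_{L}+O(L^{\ell-1}),$$
where the error absorbs the finitely many indices with $\|\mathbf{i}\|=L/d$. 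Now split the sum at $\tau^{0}_{L}$. For $\mathbf{i}\in\tau^{0}_{L}$ one has $L-d\|\mathbf{i}\|>L_{0}$, so Lemma \ref{L2.10.}(i) gives $\Delta^{\mathbf{i}}_{L}=\Delta^{\mathbf{i}}$, which equals $\Delta_{0}$ by (i); for $\mathbf{i}\in\tau_{L}\setminus\tau^{0}_{L}$, Lemma \ref{L2.10.}(ii) gives $\Delta^{\mathbf{i}}_{L}\leq\bar{\Delta}$, and there are only $O(L^{\ell-1})$ such $\mathbf{i}$ by (ii). Therefore
$$\dim_{\mathcal{K}_{\mathcal{Q}}}V_{L}=\Delta_{0}\cdot\sharp\tau^{0}_{L}+O(L^{\ell-1})=\frac{\Delta_{0}}{d^{\ell}}\cdot\frac{L^{\ell}}{\ell!}+O(L^{\ell-1}).$$
On the other hand Lemma \ref{L2.2} gives $\dim_{\mathcal{K}_{\mathcal{Q}}}V_{L}=\frac{\Delta L^{\ell}}{\ell!}+O(L^{\ell-1})$. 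Matching the coefficients of $L^{\ell}$ forces $\Delta_{0}=\Delta d^{\ell}$, and combined with $\Delta^{\mathbf{i}}_{L}=\Delta_{0}$ for $\mathbf{i}\in\tau^{0}_{L}$ (shown just above) this is exactly (iii).

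The step I expect to need the most care is the first displayed identity: one must pin down precisely which indices $\mathbf{i}$ produce a nonzero quotient $W^{\ast}_{\mathbf{i}}/W^{\ast}_{\mathbf{i}'}$, and check that the contributions coming from the boundary $\|\mathbf{i}\|\approx L/d$, together with the discarded indices in $\tau_{L}\setminus\tau^{0}_{L}$, amount to only $O(L^{\ell-1})$. This is precisely the point where it matters that the integer $L_{0}$ of Lemma \ref{L2.10.} and the index $\mathbf{i}_{0}$ realizing the minimum $\Delta_{0}$ are fixed, independent of $L$; everything else is bookkeeping with Hilbert-type asymptotics.
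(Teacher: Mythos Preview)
The paper does not supply its own proof of this lemma: it is stated with the citation \cite{Ru1, C, Y} and used as a black box, so there is nothing in the paper to compare against beyond the surrounding definitions and Remarks. Your argument is the standard one from those references and is correct: (i) is immediate from Remark~\ref{R2.10.} and the minimality of $\Delta_{0}$; (ii) is the expected lattice-point count after the shift by $\mathbf{i}_{0}$; and for (iii) you correctly telescope the filtration $\{W^{\ast}_{\mathbf{i}}\}$ via Lemma~\ref{L2.8}, isolate the main term $\Delta_{0}\cdot\sharp\tau^{0}_{L}$ using Lemma~\ref{L2.10.}, and match leading coefficients against Lemma~\ref{L2.2} to get $\Delta_{0}=\Delta d^{\ell}$. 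The only cosmetic point is that the ``$+O(L^{\ell-1})$'' in your first displayed identity for $\dim_{\mathcal{K}_{\mathcal{Q}}}V_{L}$ is unnecessary: the filtration is exhaustive, so $\dim V_{L}=\sum_{\mathbf{i}\in\tau_{L}}\Delta^{\mathbf{i}}_{L}$ exactly (the index $\mathbf{i}=(L/d,0,\ldots,0)$ already lies in $\tau_{L}$ since the paper's convention is $L-d\|\mathbf{i}\|\geq 0$ here, with the last quotient contributing $1$). This does not affect the argument, since you only use the equality up to $O(L^{\ell-1})$ anyway.
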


We can choose a basis $\mathcal{B}=\{[\psi_{1}],\ldots,[\psi_{M}]\}$ of $V_{L}$ with respect to the above filtration. Let $[\psi_{s}]$ be an element of the basis, which lies in $W^{\ast}_{\mathbf{i}}/W^{\ast}_{\mathbf{i}'},$ we may write $\psi_{s}=\tilde{P}^{e_{1}}_{1(z)}\cdots\tilde{P}^{e_{\ell}}_{\ell(z)}\gamma,$ where $\gamma\in \mathcal{K}_{\mathcal{Q}}[x_{0},\ldots,x_{n}]_{L-d\|\mathbf{i}\|}.$ For every $1\leq j\leq \ell,$ we have
\begin{eqnarray}\label{E1.10}
&&\sum_{\mathbf{i}\in\tau_{L}}\Delta^{\mathbf{i}}_{L}i_{j}=\frac{\Delta L^{\ell+1}}{(\ell+1)!d}+O(L^{\ell})
\end{eqnarray}
(The proof of \eqref{E1.10} can be seen in \cite[the equality (3.6)]{Ru1}). Hence, by \eqref{E1.10} and the definition of the Weil function, we obtain
\begin{eqnarray}\label{E1.14}
\,\,\,\,\,\,\,\,\,\,\,\,\,\,\,\,\,\,\,\sum^{M}_{s=1}\lambda_{\psi_{s}}(\tilde{f}(z))\geq\left(\frac{\Delta L^{\ell+1}}{(\ell+1)!d}+O(L^{\ell})\right)\cdot\sum^{\ell}_{j=1}\lambda_{\tilde{P}_{j(z)}}(\tilde{f}(z))+\log h^{**},
\end{eqnarray} where $h^{**}\in \mathcal{C}_{f}.$ The basis $[\psi_{1}],\ldots,[\psi_{M}]$ can be written as linear forms $L_{1},\ldots,L_{M}$ (over $\mathcal{K}_{\mathcal{Q}}$) in the basis $[\phi_{1}],\ldots, [\phi_{M}]$ and $\psi_{s}(\tilde{f})=L_{s}(\tilde{f}).$ Since there are only finitely many choices of ${\tilde{Q}_{1(z)},\ldots,\tilde{Q}_{(N+1)(z)}},$ the collection of all possible linear forms $L_{s}(1\leq s\leq M)$ is a finite set, and denote it by $\mathcal{L}:=\{L\mu\}^{\Lambda}_{\mu=1}$ $(\Lambda<+\infty).$ It is easy to see that $\mathcal{K}_{\mathcal{L}}\subset\mathcal{K}_{\mathcal{Q}}.$
\begin{lemma}\label{L2.11.} (Product to the sum estimate, see \cite{Ru2}) Let $H_{1},\ldots,H_{q}$ be hyperplanes in $\mathbb{P}^{n}(\mathbb{C})$ located in general position. Denote by $T$ the set of all injective maps $\mu:\{0,1,\ldots,n\}\rightarrow\{1,\ldots,q\}.$ Then
$$\sum^{q}_{j=1}m_{f}(r,H_{j})\leq\int^{2\pi}_{0}\max_{\mu\in T}\sum^{n}_{i=0}\lambda_{\tilde{H}_{\mu(i)}}(f(re^{i\theta}))\frac{d\theta}{2\pi}+O(1)$$ holds for all $r$ outside a set with finite Lebesgue measure.
\end{lemma}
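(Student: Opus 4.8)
The plan is to deduce the statement from a pointwise inequality on $\mathbb{P}^{n}(\mathbb{C})$ and then integrate. We may assume $q\ge n+1$, since otherwise the right-hand side is a maximum over the empty index set and there is nothing to prove. The target pointwise inequality is: there is a constant $C>0$, depending only on $H_{1},\ldots,H_{q}$, such that for every $w\in\mathbb{P}^{n}(\mathbb{C})$,
\[
\sum_{j=1}^{q}\lambda_{\tilde H_{j}}(w)\ \le\ \max_{\mu\in T}\sum_{i=0}^{n}\lambda_{\tilde H_{\mu(i)}}(w)+C .
\]
Granting this, I substitute $w=f(re^{i\theta})$, average against $\tfrac{d\theta}{2\pi}$, and use the identity $m_{f}(r,H_{j})=\int_{S(r)}\lambda_{\tilde H_{j}}(\tilde f)\sigma_{m}+O(1)=\frac{1}{2\pi}\int_{0}^{2\pi}\lambda_{\tilde H_{j}}(f(re^{i\theta}))\,d\theta+O(1)$, valid for $m=1$ directly from the definition of $m_{f}$ in Section~2.1 (the $O(1)$ being the constant term at $r=1$). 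Summing the finitely many constants over $j$ gives exactly the asserted bound; the exceptional-set clause is in fact superfluous here, but harmless.

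For the pointwise inequality I would fix $w$ and relabel $H_{1},\ldots,H_{q}$ so that $\lambda_{\tilde H_{1}}(w)\ge\lambda_{\tilde H_{2}}(w)\ge\cdots\ge\lambda_{\tilde H_{q}}(w)$, equivalently so that the normalized quantities $|H_{j}(w)|/(\|w\|\cdot\|H_{j}\|)$ are nondecreasing in $j$. The crucial ingredient is a uniform lower bound furnished by general position: for every $(n+1)$-element subset $K\subset\{1,\ldots,q\}$ the function $w\mapsto\max_{j\in K}|H_{j}(w)|/(\|w\|\cdot\|H_{j}\|)$ is well defined and continuous on $\mathbb{P}^{n}(\mathbb{C})$, and strictly positive because the $H_{j}$ with $j\in K$ have no common zero; by compactness of $\mathbb{P}^{n}(\mathbb{C})$ it is bounded below by some $c_{K}>0$, and taking $c:=\min_{K}c_{K}>0$ over the finitely many $K$ yields a bound independent of both $w$ and $K$. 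Applying this to $K=\{1,\ldots,n+1\}$ (in the relabelled order) gives $\min_{1\le j\le n+1}\lambda_{\tilde H_{j}}(w)=\lambda_{\tilde H_{n+1}}(w)\le\log(1/c)$, and by monotonicity $\lambda_{\tilde H_{j}}(w)\le\log(1/c)$ for all $j\ge n+1$. Hence
\[
\sum_{j=1}^{q}\lambda_{\tilde H_{j}}(w)\ \le\ \sum_{j=1}^{n+1}\lambda_{\tilde H_{j}}(w)+(q-n-1)\log(1/c)\ =\ \sum_{j=1}^{n+1}\lambda_{\tilde H_{j}}(w)+O(1).
\]
Finally $\{1,\ldots,n+1\}$ is the image of some injective map $\mu\in T$, so $\sum_{j=1}^{n+1}\lambda_{\tilde H_{j}}(w)\le\max_{\mu\in T}\sum_{i=0}^{n}\lambda_{\tilde H_{\mu(i)}}(w)$, which establishes the pointwise inequality.

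The part I expect to be the real content — everything else being bookkeeping with the definitions of $\lambda_{\tilde H_{j}}$ and $m_{f}$ — is the uniform constant $c$: one has to check simultaneously that it can be taken independent of the point $w$ (compactness of $\mathbb{P}^{n}(\mathbb{C})$) and of which $n+1$ of the $q$ hyperplanes are used (finitely many subsets, each in general position, hence with no common zero). Minor care is also needed to absorb the normalizations $\|H_{j}\|$ into $O(1)$ constants, so that "ordering by $\lambda_{\tilde H_{j}}(w)$" and "ordering by $|H_{j}(w)|/\|w\|$" may be used interchangeably up to a bounded error; since there are only finitely many $j$, this is routine.
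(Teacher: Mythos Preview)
Your proof is correct. The paper does not supply its own proof of this lemma but merely cites it from \cite{Ru2}; your argument---reordering the Weil functions at each point, using general position together with compactness of $\mathbb{P}^{n}(\mathbb{C})$ to obtain a uniform constant $c>0$ bounding $\lambda_{\tilde H_{j}}(w)$ for all but the $n+1$ largest terms, and then integrating---is precisely the standard proof of this estimate, and your observation that the exceptional-set clause is superfluous is also correct.
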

By \eqref{E2.13}, \eqref{E1.14} and Lemma \ref{L2.11.}, take integration on the sphere of radius $r$, we have
\begin{eqnarray}\label{E1.16}
&&\frac{\Delta L^{\ell+1}}{(\ell+1)!d}(1+O(1))\cdot\sum^{q}_{j=1}m_{f}(r,\tilde{Q}_{j})
\leq\left(\frac{N-\ell}{\max\{1,\min\{N-\ell,\kappa\}\}}+1\right)\\\nonumber&&\cdot\int^{2\pi}_{0}\max_{\mathcal{K}}\sum_{j\in\mathcal{K}}\lambda_{L_{j}}(\tilde{f}(re^{i\theta}))\frac{d\theta}{2\pi}+o(T_{f}(r))
\end{eqnarray} for all $r$ outside a set with finite Lebesgue measure,
where the set $\mathcal{K}$ ranges over all subset of $\{1,\ldots,\Lambda\}$ such that the linear forms $\{L_{j}\}_{j\in \mathcal{K}}$ are linearly independent.\par

By Theorem \ref{TB}, we have, for any $\epsilon>0$,
\begin{eqnarray}\label{E1.17}
&&\int^{2\pi}_{0}\max_{\mathcal{K}}\sum_{j\in\mathcal{K}}\lambda_{L_{j}}(\tilde{f}(re^{i\theta}))\frac{d\theta}{2\pi}\leq(M+\varepsilon)T_{F}(r)-N_{W}(r,0)+o(T_{f}(r))
\end{eqnarray}holds for all $r$ outside a set $E$ with finite Lebesgue measure.
Taking $\varepsilon=\frac{1}{2}$ in (\ref{E1.17}) and (\ref{E1.16}), and (\ref{E3.1}), we obtain
\begin{eqnarray*}
&&\frac{\Delta L^{\ell+1}}{(\ell+1)!d}(1+O(1))\cdot\sum^{q}_{j=1}m_{f}(r,\tilde{Q}_{j})\\
&&\leq\left(\frac{N-\ell}{\max\{1,\min\{N-\ell,\kappa\}\}}+1\right)\left((M+\epsilon)T_{F}(r)-N_{W}(r,0)+o(T_{f}(r))\right)\\
&&\leq\left(\frac{N-\ell}{\max\{1,\min\{N-\ell,\kappa\}\}}+1\right)(\frac{\Delta L^{\ell}}{\ell!} + \rho(L)+\epsilon)LT_{f}(r)+o(T_{f}(r)),
\end{eqnarray*}holds for all $r\not\in E,$ where $W$ is the Wronskian of $F_1,\ldots,F_M.$\par

Take $L$ large enough such that $\epsilon<\left(\frac{N-\ell}{\max\{1,\min\{N-\ell,\kappa\}\}}+1\right)o(1),$ where $\epsilon>0$ is any given in the theorem. Then we have\\
\begin{eqnarray}\label{E99}&&\sum^{q}_{j=1}\frac{1}{d}m_{f}(r,\tilde{Q}_{j})\leq\left(\left(\frac{N-\ell}{\max\{1,\min\{N-\ell,\kappa\}\}}+1\right)(\ell+1)+\epsilon\right)T_{f}(r)\end{eqnarray}
holds for all $r\not\in E.$ \par

And by the first main theorem, (\ref{E99}) can be writen
\begin{eqnarray*}&&\left(q-\left(\frac{N-\ell}{\max\{1,\min\{N-\ell,\kappa\}\}}+1\right)(\ell+1)-\epsilon\right)T_{f}(r)\leq\sum^{q}_{j=1}\frac{1}{d}N_{f}(r,\tilde{Q}_{j}).\end{eqnarray*}

Secondly, for the general case whenever all $Q_{j}$ may not have the same degree. Then consider $Q_{j}^{\frac{d}{d_{j}}}$ instead of $Q_{j}.$ We have
$N_{f}(r, Q_{j})=\frac{d_{j}}{d}N_{f}(r, Q_{j}^{\frac{d}{d_{j}}}).$ Then the theorem is proved immediately.

\section{Proof of Theorem \ref{T1.2}}
Replacing $Q_{i}$ by $Q_{i}^{d/d_{i}}$ if necessary with the note that $$\frac{1}{d}N^{[L_{0}]}(r,f*Q_{i}^{d/d_{i}})\leq\frac{1}{d_{i}}N^{[L_{j}]}(r,f*Q_{i}),$$
we may assume that all hypersurfaces $Q_{i}(1\leq i\leq q)$ are of the same degree $d.$ We may also assume that $q>(\frac{N-n}{\max\{1,\min\{N-n,\kappa\}\}}+1)(n+1).$\par

Consider a reduced representation $\tilde{f}=(f_{0},\ldots,f_{n}):\mathbb{C}\rightarrow\mathbb{C}^{n+1}$ of $f.$ We also note that
$$N^{[L_{0}]}_{Q_{i}(\tilde{f})}(r)=N^{[L_{0}]}_{\tilde{Q}_{i}(\tilde{f})}(r)+o(T_{f}(r)).$$
Then without loss of generality we may assume that $Q_{i}\in \mathcal{K}_{f}[x_{0}, \ldots, x_{n}].$

We set $$\mathcal{I}=\{(i_{1},\ldots,i_{N+1});1\leq i_{j}\leq q, i_{j}\neq i_{t}\,\,\, \forall j\neq t\}.$$
For each $I=(i_{1},\ldots,i_{N+1})\in\mathcal{I},$ we denote by $P_{I1},\ldots,P_{I(n+1)}$ the moving hypersurfaces obtained in Lemma \ref{L2.4} with respect to the family of moving hypersurfaces $\{Q_{i_{1}},\ldots,Q_{i_{N+1}}\}.$ It is easy to see that there exists a positive function $h\in\mathcal{C}_{f}$ such that
$$|P_{It}(\omega)|\leq h\max_{\kappa+1\leq j\leq N+1-n+t}|Q_{i_{j}}(\omega)|,\,\,\,\kappa+1\leq t\leq n,$$ for all $I\in\mathcal{I}$ and $\omega=(\omega_0,\ldots,\omega_n)\in \mathbb{C}^{n+1}.$ \par

For a fixed point $z\in \mathbb{C}^{m}\setminus \cup ^{q}_{i=1}Q_{i}(\tilde{f})^{-1}(\{0, \infty\}).$ We may assume that such that $$|Q_{i_{1}}(\tilde{f})(z)|\leq|Q_{i_{2}}(\tilde{f})(z)|\leq\cdots\leq|Q_{i_{q}}(\tilde{f})(z)|.$$
Let $I=(i_{1},\ldots,i_{N+1}).$ Since $P_{I1},\ldots,P_{I(n+1)}$ are in weakly general position, there exist functions $g_{0},g\in\mathcal{C}_{f},$ which may be chosen independent of $I$ and $z,$ such that
$$\|\tilde{f}(z)\|^{d} \leq g_{0 }(z)\max_{1\leq j\leq n+1}|P_{Ij}(\tilde{f})(z)|\leq g(z)|Q_{i_{N+1}}(\tilde{f})(z)|.$$
Therefore, we have
\begin{eqnarray}
&&\prod^{q}_{i=1}\frac{\|\tilde{f}(z)\|^{d}}{|Q_{i}(\tilde{f})(z)|}\leq g^{q-N}(z)\prod^{N}_{j=1}\frac{\|\tilde{f}(z)\|^{d}}{|Q_{i_{j}}(\tilde{f})(z)|}\\
\nonumber&=&g^{q-N}(z)\prod^{\kappa}_{j=1}\frac{\|\tilde{f}(z)\|^{d}}{|Q_{i_{j}}(\tilde{f})(z)|}\cdot\prod^{N-n+\kappa}_{j=\kappa+1}
\frac{\|\tilde{f}(z)\|^{d}}{|Q_{i_{j}}(\tilde{f})(z)|}\cdot\prod^{N}_{j=N-n+\kappa+1}\frac{\|\tilde{f}(z)\|^{d}}{|Q_{i_{j}}(\tilde{f})(z)|}
\\\nonumber&\leq& h_{1}\prod^{\kappa}_{j=1}\frac{\|\tilde{f}(z)\|^{d}}{|P_{Ij}(\tilde{f})(z)|}\cdot\prod^{N-n+\kappa}_{j=\kappa+1}
\frac{\|\tilde{f}(z)\|^{d}}{|Q_{i_{j}}(\tilde{f})(z)|}\cdot\prod^{n}_{j=\kappa+1}\frac{\|\tilde{f}(z)\|^{d}}{|P_{Ij}(\tilde{f})(z)|},
\end{eqnarray}
where $h_{1}=g^{q-N}(z)h^{n-\kappa}(z),$ $I=(i_{1},\ldots,i_{N+1})$ and $\zeta$ is a function in $\mathcal{C}_{f},$ which is chosen common for all $I\in\mathcal{I},$ such that $$|P_{Ij}(\omega)|\leq\zeta(z)\|\omega\|^{d}, \forall\omega=(\omega_0,\ldots,\omega_n).$$
We will consider if $N-n \leq \kappa,$ we have by the inequality above,
\begin{eqnarray}\label{E2.2}
&&\prod^{q}_{i=1}\frac{\|\tilde{f}(z)\|^{d}}{|Q_{i}(\tilde{f})(z)|}\leq h_{1}\prod^{\kappa}_{j=1}\frac{\|\tilde{f}(z)\|^{d}}{|P_{Ij}(\tilde{f})(z)|}\cdot \left(\prod^{N-n}_{j=1}\frac{\|\tilde{f}(z)\|^{d}}{|P_{Ij}(\tilde{f})(z)|}\right)\cdot\prod^{n}_{j=\kappa +1}\frac{\|\tilde{f}(z)\|^{d}}{|P_{Ij}(\tilde{f})(z)|}
\\ \nonumber&\leq& \zeta(z)^{\kappa+n-N}h_{1}\prod^{\kappa}_{j=1}\frac{\|\tilde{f}(z)\|^{d}}{|P_{Ij}(\tilde{f})(z)|}\cdot \left(\prod^{N-n}_{j=1}\frac{\|\tilde{f}(z)\|^{d}}{|P_{Ij}(\tilde{f})(z)|}\right)\left(\prod^{\kappa}_{j=N-n+1}\frac{\|\tilde{f}(z)\|^{d}}{|P_{Ij}(\tilde{f})(z)|}\right)\\ \nonumber&&\cdot\prod^{n}_{j=\kappa +1}\frac{\|\tilde{f}(z)\|^{d}}{|P_{Ij}(\tilde{f})(z)|}
\leq h_{2}\cdot\left(\prod^{\kappa}_{j=1}\frac{\|\tilde{f}(z)\|^{d}}{|P_{Ij}(\tilde{f})(z)|}\right)^{2}\left(\prod^{n}_{j=\kappa +1}\frac{\|\tilde{f}(z)\|^{d}}{|P_{Ij}(\tilde{f})(z)|}\right)^{2}
\\ \nonumber&=& h_{2}\cdot\left(\prod^{n}_{j=1}\frac{\|\tilde{f}(z)\|^{d}}{|P_{Ij}(\tilde{f})(z)|}\right)^{2},
\end{eqnarray}

where $h_{2}=\zeta(z)^{2n-N}h_{1}\in\mathcal{C}_{f},$ however if $N-n\geq \kappa$, we get
\begin{eqnarray}\label{E2.3}
&&\prod^{q}_{i=1}\frac{\|\tilde{f}(z)\|^{d}}{|Q_{i}(\tilde{f})(z)|} \\ \nonumber&\leq&
h_{1}\prod^{\kappa}_{j=1}\frac{\|\tilde{f}(z)\|^{d}}{|P_{Ij}(\tilde{f})(z)|} \left(\prod^{\kappa}_{j=1}\frac{\|\tilde{f}(z)\|^{d}}{|P_{Ij}(\tilde{f})(z)|}\right)^{\frac{N-n}{\kappa}}\prod^{n}_{j=\kappa +1}\frac{\|\tilde{f}(z)\|^{d}}{|P_{Ij}(\tilde{f})(z)|}
\\ \nonumber&=&h_{1}\cdot\left(\prod^{\kappa}_{j=1}\frac{\|\tilde{f}(z)\|^{d}}{|P_{Ij}(\tilde{f})(z)|}\right)^{1+\frac{N-n}{\kappa}}\cdot\prod^{n}_{j=\kappa +1}\frac{\|\tilde{f}(z)\|^{d}}{|P_{Ij}(\tilde{f})(z)|}
\\ \nonumber&\leq& h_{3}\cdot\left(\prod^{\kappa}_{j=1}\frac{\|\tilde{f}(z)\|^{d}}{|P_{Ij}(\tilde{f})(z)|}\right)^{1+\frac{N-n}{\kappa}}\left(\prod^{n}_{j=\kappa +1}\frac{\|\tilde{f}(z)\|^{d}}{|P_{Ij}(\tilde{f})(z)|}\right)^{1+\frac{N-n}{\kappa}}
\\ \nonumber&=& h_{3}\left(\prod^{n}_{j=1}\frac{\|\tilde{f}(z)\|^{d}}{|P_{Ij}(\tilde{f})(z)|}\right)^{1+\frac{N-n}{\kappa}}
\end{eqnarray} where $h_{3}=h_{1}\zeta^{\frac{(n-\kappa)(N-n)}{\kappa}}(z)\in\mathcal{C}_{f}.$

Thus by (\ref{E2.2}) and (\ref{E2.3}),we get
\begin{eqnarray}\label{E2.4}
&&\prod^{q}_{i=1}\frac{\|\tilde{f}(z)\|^{d}}{|Q_{i}(\tilde{f})(z)|}\leq h^{*}\left(\prod^{n}_{j=1}\frac{\|\tilde{f}(z)\|^{d}}{|P_{Ij}(\tilde{f})(z)|}\right)^{1+\frac{N-n}{\max\{1,\min\{N-n,\kappa\}\}}},
\end{eqnarray}where $h^*=\max\{h_2, h_3\}\in\mathcal{C}_{f}.$

Hence, by taking logarithms in the tow sides of (\ref{E2.4}),we can obtain

\begin{eqnarray} \label{E2.5}
&&\log\prod^{q}_{i=1}\frac{\|\tilde{f}(z)\|^{d}}{|Q_{i}(\tilde{f})(z)|}\\ \nonumber&\leq& \log h^{*}+\left(1+\frac{N-n}{\max\{1,\min\{N-n,\kappa\}\}}\right)\log\left(\prod^{n}_{j=1}\frac{\|\tilde{f}(z)\|^{d}}{|P_{Ij}(\tilde{f})(z)|}\right).
\end{eqnarray}\par

Now, for each non-negative integer $L,$ we denote by $V_{L}$ the vector space (over $\mathcal{K}_{\mathcal{Q}}$) consisting of all homogeneous polynomials of degree $L$ in $\mathcal{K}_{\mathcal{Q}}[x_{0},\ldots, x_{n}]$ and the zero polynomial. Denote by $(P_{I1},\ldots, P_{In})$ the ideal in $\mathcal{K}_{\mathcal{Q}}[x_{0},\ldots, x_{n}]$ generated by $P_{I1},\ldots, P_{In}.$

\begin{lemma}\label{L2.15}(See \cite{D}, Proposition 3.3).
Let $\{P_{i}\}^{q}_{i=1}(q\geq n+1)$ be a set of homogeneous polynomials of common degree $d\geq 1$ in $\mathcal{K}_{f}[x_{0},\ldots, x_{n}]$ in weakly general position. Then for any nonnegative integer $L$ and for any $J:=\{j_{1},\ldots,j_{n}\}\subset\{1,\ldots,q\},$ the dimension of the vector space $\frac{V_{L}}{(P_{j_{1}},\ldots,P_{j_{n}})\cap V_{L}}$ is equal to the number of n-tuples $(s_{1},\ldots,s_{n})\in\mathbf{N}^{n}_{0}$ such that $s_{1}+\cdots+s_{n}\leq L$ and $0\leq s_{1},\ldots,s_{n}\leq d-1.$ In particular, for all $L\geq n(d-1),$ we have $$\dim\frac{V_{L}}{(P_{j_{1}},\ldots,P_{j_{n}})\cap V_{L}}=d^{n}.$$
\end{lemma}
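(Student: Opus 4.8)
The plan is to identify the quotient with a graded component of a quotient ring and then invoke the theory of regular sequences. Write $R=\mathcal{K}_f[x_0,\dots,x_n]$ with its standard grading, and for $J=\{j_1,\dots,j_n\}$ put $I_J=(P_{j_1},\dots,P_{j_n})$, the homogeneous ideal they generate. Since each $P_{j_i}$ is homogeneous of degree $d$, the subspace $(P_{j_1},\dots,P_{j_n})\cap V_L$ is exactly the degree-$L$ piece $(I_J)_L$, so $\frac{V_L}{(P_{j_1},\dots,P_{j_n})\cap V_L}=(R/I_J)_L$ and the whole lemma reduces to computing $\dim_{\mathcal{K}_f}(R/I_J)_L$, i.e. the Hilbert function of $R/I_J$.

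The first substantive step, and the one I expect to be the main obstacle, is to show that $P_{j_1},\dots,P_{j_n}$ is a regular sequence in $R$. Weakly general position provides a point $z_0$ at which the specialized forms satisfy $\operatorname{codim}_{\mathbb{P}^n}\bigl(\bigcap_{i=1}^{k}D_{j_i}(z_0)\bigr)\ge k$ for every $k\le n$; by upper semicontinuity of fibre dimension in the family of zero-sets parametrized by $z$, this bound passes to the generic coefficients, so the affine cone $V(P_{j_1},\dots,P_{j_k})\subset\mathbb{C}^{n+1}$ (over $\overline{\mathcal{K}_f}$) has dimension $\le n+1-k$, while Krull's height theorem gives the opposite inequality. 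Hence $\dim R/(P_{j_1},\dots,P_{j_k})=(n+1)-k$ for each $k\le n$, and since $R$ is Cohen--Macaulay, a homogeneous sequence that drops the dimension by exactly one at each stage is a regular sequence. (This is essentially the content of \cite{D}, Proposition 3.3; the delicate point is precisely this transfer from one good specialization to the generic coefficients.)

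Once $P_{j_1},\dots,P_{j_n}$ is known to be a regular sequence of forms of degree $d$, the Koszul complex on them is a graded free resolution of $R/I_J$, and the standard bookkeeping with Hilbert series gives
$$\sum_{L\ge 0}\dim_{\mathcal{K}_f}(R/I_J)_L\,t^L=\frac{(1-t^d)^n}{(1-t)^{\,n+1}}=(1+t+\cdots+t^{d-1})^n\cdot\frac{1}{1-t}.$$
The coefficient of $t^m$ in $(1+t+\cdots+t^{d-1})^n$ equals the number of $(s_1,\dots,s_n)\in\mathbf{N}_0^n$ with $0\le s_i\le d-1$ and $s_1+\cdots+s_n=m$; multiplying by $\frac{1}{1-t}$ turns the equality $=L$ into the inequality $\le L$, so $\dim_{\mathcal{K}_f}(R/I_J)_L$ is exactly the number of $(s_1,\dots,s_n)\in\mathbf{N}_0^n$ with $0\le s_i\le d-1$ and $s_1+\cdots+s_n\le L$, as asserted. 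When $L\ge n(d-1)$ the constraint $s_1+\cdots+s_n\le L$ holds automatically for every choice of $s_i\in\{0,\dots,d-1\}$, so the count collapses to $d^n$. A more self-contained variant avoids the Koszul and Cohen--Macaulay language: one inducts on $n$, using that $P_{j_n}$ is a nonzerodivisor on $R/(P_{j_1},\dots,P_{j_{n-1}})$ to see that passing to $R/(P_{j_1},\dots,P_{j_n})$ multiplies the Hilbert series by $(1-t^d)$, and then reads off the coefficient as above.
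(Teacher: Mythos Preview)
The paper does not give its own proof of this lemma; it is simply quoted from \cite{D} (Proposition 3.3) with no argument supplied. Your proposal is a correct and standard proof: reducing to the Hilbert function of $R/I_J$, extracting from weak general position that $P_{j_1},\dots,P_{j_n}$ form a homogeneous regular sequence (via the semicontinuity/Cohen--Macaulay argument), and then reading off the Hilbert series $(1-t^d)^n/(1-t)^{n+1}$ to obtain the combinatorial count and the stabilisation at $d^n$ for $L\ge n(d-1)$. Since the paper offers nothing to compare against beyond the citation, there is no divergence to report; your argument is exactly the sort of proof one expects to lie behind the cited proposition.
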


Now, for each positive integer $L$ big enough, divided by $d,$ and $\mathbf{i}= (i_{1},\ldots,i_{n})\in\mathbf{N}^{n}_{0}$ with $\sigma(\mathbf{i})=\sum_{j=1}^{n}i_{j}\leq\frac{L}{d},$ we set
$$W_{L,\mathbf{i}}= \sum_{(\mathbf{j})=\left(j_{1}, \ldots, j_{n}\right) \geq(\mathbf{i})} P_{I1}^{j_{1}} \cdots P_{In}^{j_{n}} \cdot V_{L-d\sigma(\mathbf{j})}$$

It is clear that  $W_{L,(0, \ldots, 0)}=V_{L} \text { and } W_{L,\mathbf{i}} \supset W_{L,\mathbf{i}'}$  if $\mathbf{i}<\mathbf{i}'$, so $\{{W_{L,\mathbf{i}}}\}$ is a filtration of $V_{L}.$ For the proof of the above lemma, refer to \cite{Q1}. \par
\begin{lemma}
Let $\mathbf{i}=(i_{1}, \ldots, i_{n}),\mathbf{i}^{'}=(i_{1}^{'}, \ldots, i_{n}^{'}) \in \mathbb{N}_{0}^{n}.$ Suppose that $\mathbf{i}^{'}$ follows $\mathbf{i}$ in the lexicographic ordering and  $d\sigma(\mathbf{i})<L.$ Then
$$\frac{W_{L,\mathbf{i}}}{W_{L,\mathbf{i}'}}\cong\frac{V_{L-d\sigma(\mathbf{i})}}{(P_{j_{1}},\ldots,P_{j_{n}})\cap V_{L-d\sigma(\mathbf{i})}}.$$
\end{lemma}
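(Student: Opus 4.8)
The statement to prove is a graded-module isomorphism

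$$\frac{W_{L,\mathbf{i}}}{W_{L,\mathbf{i}'}}\cong\frac{V_{L-d\sigma(\mathbf{i})}}{(P_{j_{1}},\ldots,P_{j_{n}})\cap V_{L-d\sigma(\mathbf{i})}},$$

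where $\mathbf{i}'$ is the lexicographic successor of $\mathbf{i}$ among the $n$-tuples with $d\sigma(\mathbf{i})<L$. Let me sketch the argument.

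\medskip

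\textbf{Plan.} The natural approach is to exhibit an explicit surjective $\mathcal{K}_{\mathcal{Q}}$-linear map $V_{L-d\sigma(\mathbf{i})}\to W_{L,\mathbf{i}}/W_{L,\mathbf{i}'}$ and identify its kernel with $(P_{I1},\ldots,P_{In})\cap V_{L-d\sigma(\mathbf{i})}$. First I would define $\phi\colon V_{L-d\sigma(\mathbf{i})}\to W_{L,\mathbf{i}}/W_{L,\mathbf{i}'}$ by $\phi(\gamma)=\big[P_{I1}^{i_{1}}\cdots P_{In}^{i_{n}}\gamma\big]$, i.e. multiply by the monomial $P^{\mathbf{i}}:=P_{I1}^{i_{1}}\cdots P_{In}^{i_{n}}$ (of degree $d\sigma(\mathbf{i})$) and pass to the quotient. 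This is well-defined because $P^{\mathbf{i}}\gamma\in W_{L,\mathbf{i}}$ by definition of $W_{L,\mathbf{i}}$ (the term with $(\mathbf{j})=(\mathbf{i})$). Surjectivity: any generator of $W_{L,\mathbf{i}}$ has the form $P^{(\mathbf{j})}\gamma'$ with $(\mathbf{j})\geq(\mathbf{i})$; if $(\mathbf{j})>(\mathbf{i})$ strictly then $(\mathbf{j})\geq(\mathbf{i}')$ since $\mathbf{i}'$ is the immediate successor, so such a term already lies in $W_{L,\mathbf{i}'}$ and is killed in the quotient; hence modulo $W_{L,\mathbf{i}'}$ every class is represented by a combination of terms with $(\mathbf{j})=(\mathbf{i})$, i.e. by $P^{\mathbf{i}}\gamma$ for some $\gamma\in V_{L-d\sigma(\mathbf{i})}$, which is $\phi(\gamma)$.

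\medskip

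\textbf{The kernel computation.} This is where the main content lies. Clearly if $\gamma=\sum_{k=1}^{n}P_{Ik}\delta_{k}\in(P_{I1},\ldots,P_{In})\cap V_{L-d\sigma(\mathbf{i})}$, then $P^{\mathbf{i}}\gamma=\sum_{k}P^{\mathbf{i}+e_{k}}\delta_{k}$, and each tuple $\mathbf{i}+e_{k}$ is strictly larger than $\mathbf{i}$ in every ordering, hence $\geq(\mathbf{i}')$, so $P^{\mathbf{i}}\gamma\in W_{L,\mathbf{i}'}$ and $\gamma\in\ker\phi$. For the reverse inclusion: suppose $P^{\mathbf{i}}\gamma\in W_{L,\mathbf{i}'}$, i.e. $P^{\mathbf{i}}\gamma=\sum_{(\mathbf{j})\geq(\mathbf{i}')}P^{(\mathbf{j})}\gamma_{\mathbf{j}}$ with $\gamma_{\mathbf{j}}\in V_{L-d\sigma(\mathbf{j})}$. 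The idea is that $\mathbf{i}'$ being the lexicographic successor of $\mathbf{i}$ forces each $(\mathbf{j})$ appearing on the right to dominate $\mathbf{i}$ coordinatewise in at least one "early" slot, so that after grouping terms one can factor out one of the $P_{Ik}$; more precisely I would argue, working with the polynomial ring identity $P^{\mathbf{i}}\big(\gamma-\sum_{(\mathbf{j})\geq(\mathbf{i}')}P^{(\mathbf{j})-\mathbf{i}}\gamma_{\mathbf{j}}\big)=0$ whenever every $(\mathbf{j})-\mathbf{i}$ has non-negative entries — and for those $(\mathbf{j})$ where some entry of $(\mathbf{j})-\mathbf{i}$ is negative, the first such slot is forced by lexicographic successorship to occur in a coordinate whose earlier coordinates match $\mathbf{i}$, which lets one absorb it — to conclude $\gamma\in(P_{I1},\ldots,P_{In})$. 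Since $P^{\mathbf{i}}$ is a nonzerodivisor in $\mathcal{K}_{\mathcal{Q}}[x_{0},\ldots,x_{n}]$ (it is a nonzero polynomial over a field, hence the ring is a domain), cancelling $P^{\mathbf{i}}$ is legitimate.

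\medskip

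\textbf{Main obstacle.} The genuinely delicate point is the reverse kernel inclusion: translating the relation "$P^{\mathbf{i}}\gamma\in W_{L,\mathbf{i}'}$" into "$\gamma\in(P_{I1},\ldots,P_{In})$" requires carefully exploiting that $\mathbf{i}'$ is the \emph{immediate} lexicographic successor of $\mathbf{i}$ — it is precisely this that guarantees every monomial exponent $(\mathbf{j})$ with $(\mathbf{j})\geq(\mathbf{i}')$ but $(\mathbf{j})\not\geq\mathbf{i}$ coordinatewise differs from $\mathbf{i}$ in a controlled way, namely $(\mathbf{j})$ must have strictly larger entry than $\mathbf{i}$ in some coordinate $r$ while agreeing with $\mathbf{i}$ in coordinates $1,\ldots,r-1$, so the offending power $P_{Ir}^{j_r}$ with $j_r>i_r$ contributes a factor $P_{Ir}$ beyond $P^{\mathbf{i}}$. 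Once this bookkeeping is done, cancelling the nonzerodivisor $P^{\mathbf{i}}$ finishes the proof. This is exactly the mechanism used in the analogous statements of Corvaja–Zannier and in Lemma \ref{L2.8}, so I would follow that template, merely keeping careful track of the index $\kappa$-shifted tuples; no new ideas beyond the combinatorics of the lexicographic filtration are needed.
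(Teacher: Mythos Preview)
Your overall strategy---defining $\phi\colon V_{L-d\sigma(\mathbf{i})}\to W_{L,\mathbf{i}}/W_{L,\mathbf{i}'}$ by $\gamma\mapsto[P^{\mathbf{i}}\gamma]$, checking surjectivity, and identifying the kernel---is exactly the standard one; the paper itself gives no proof, simply referring to \cite{Q1}, and that reference (following \cite{Co,Ru1}) proceeds in just this way. Your surjectivity argument and the inclusion $(P_{I1},\ldots,P_{In})\cap V_{L-d\sigma(\mathbf{i})}\subseteq\ker\phi$ are fine.

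The genuine gap is in the reverse kernel inclusion. You try to reach the identity $P^{\mathbf{i}}\bigl(\gamma-\sum P^{\mathbf{j}-\mathbf{i}}\gamma_{\mathbf{j}}\bigr)=0$ and then cancel the nonzerodivisor $P^{\mathbf{i}}$, but when $\mathbf{j}>\mathbf{i}$ lexicographically with some coordinate $j_s<i_s$ the monomial $P^{\mathbf{j}}$ is \emph{not} divisible by $P^{\mathbf{i}}$ at all, and your ``absorbing'' step never explains how such terms disappear. In fact no purely combinatorial argument can succeed here: take $n=2$ and $P_{I1}=P_{I2}$; then for $\mathbf{i}=(0,1)$ one finds $W_{L,\mathbf{i}'}\supseteq P_{I1}V_{L-d}=P^{\mathbf{i}}V_{L-d}$, so $\ker\phi=V_{L-d}$, strictly larger than $(P_{I1},P_{I2})\cap V_{L-d}$, and the isomorphism fails. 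What you are missing is the hypothesis that $P_{I1},\ldots,P_{In}$ are in weakly general position---equivalently, that they form a regular sequence in $\mathcal{K}_{\mathcal{Q}}[x_0,\ldots,x_n]$. This is precisely what the construction in Lemma~\ref{L2.4} supplies, and the proof in \cite{Q1} uses it essentially (one inducts, passing to the quotient by $P_{I1}$ and using that $P_{I2},\ldots,P_{In}$ remain a regular sequence there). Finally, the index $\kappa$ plays no role whatsoever in this lemma; your remark about ``$\kappa$-shifted tuples'' is a distraction.
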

This lemma yields that \begin{eqnarray}\dim \frac{W_{L,\mathbf{i}}}{W_{L,\mathbf{i}'}}=\dim\frac{V_{L-d\sigma(\mathbf{i})}}{(P_{j_{1}},\ldots,P_{j_{n}})\cap V_{L-d\sigma(\mathbf{i})}}.\end{eqnarray}\par
Fix a number $L$ large enough (chosen later). Set $u=u_{L} :=\dim V_{L}={L+n \choose n}.$ We assume that $$V_{L}=W_{L,\mathbf{i}_{1}} \supset W_{L,\mathbf{i}_{2}} \supset \cdots \supset W_{L,\mathbf{i}_{K}},$$ where $W_{L,\mathbf{i}_{s+1}}$ follows $W_{L,\mathbf{i}_{s}}$ in the ordering and $\mathbf{i}_{K}=\left(\frac{L}{d}, 0, \ldots, 0\right) .$ It is easy to see that $K$ is the number of $n$ -tuples $(i_{1}, \ldots, i_{n})$ with $i_{j} \geq 0$ and $i_{1}+\cdots+i_{n} \leq \frac{L}{d} .$ Then we have $$K={\frac{L}{d}+n \choose n}.$$
For each $k \in\{1, \ldots, K-1\}$ we set $m_{k}^{I}=\dim \frac{W_{L,\mathbf{i}_{k}}}{W_{L,\mathbf{i}_{k+1}}},$ and set $m_{K}^{I}=1.$ Then by Lemma \ref{L2.15}, $m_{k}^{I}$ does not depends on $\{P_{I1}, \ldots, P_{In}\}$ and $k,$ but only on $\sigma(\mathbf{i}_{k}).$ Hence, we set $m_{k} :=m_{k}^{I}.$ We also note that by Lemma \ref{L2.15} $$m_{k}=d^{n}$$ for all $k$ with $L-d\sigma(\mathbf{i}_{k}) \geq n(d-1)$ (it is equivalent to $\sigma(\mathbf{i}_{k}) \leq \frac{L}{d}-n ).$\par

From the above filtration, we may choose a basis $\{\psi_{1}^{I}, \cdots, \psi_{u}^{I}\}$ of
$V_{L}$ such that $$\{\psi_{u-\left(m_{s}+\cdots+m_{K}\right)+1}^{I}, \ldots, \psi_{u}^{I}\}$$ is a basis of $W_{L,\mathbf{i}_{s}}.$ For each $k \in\{1, \ldots, K\}$ and $l \in\left\{u-\left(m_{k}+\cdots+m_{k}\right)+1, \ldots, u-\right.$ $(m_{k+1}+\cdots+m_{K}) \},$ we may write
$$\psi_{l}^{I}=P_{I1}^{i_{1k}} \dots P_{In}^{i_{nk}} h_{l}, \,\,\, \text { where }\left(i_{1 k}, \ldots, i_{n k}\right)=(i)_{k}, h_{ l} \in W_{L-d\sigma(\mathbf{i}_{k})}^{I}.$$ We may choose $h_{l}$ to be a monomial.\par
We have the following estimates: Firstly, we see that $$\sum_{k=1}^{K} m_{k} i_{s k}=\sum_{l=0}^{\frac{L}{d}} \sum_{k|\sigma(\mathbf{i}_{k})=l} m(l) i_{s k}=\sum_{l=0}^{\frac{L}{d}} m(l) \sum_{k|\sigma(\mathbf{i}_{k})=l} i_{s k}.$$
Note that, by the symmetry $\left(i_{1}, \ldots, i_{n}\right) \rightarrow\left(i_{\sigma(1)}, \ldots, i_{\sigma(n)}\right)$ with $\sigma \in S(n), \sum_{k |\sigma(\mathbf{i}_{k})=l}i_{s k}$ does not depend on $s .$ We set $$a :=\sum_{k=1}^{K} m_{k} i_{s k}, \quad \text { which is independent of } s \text { and } I.$$
Then we have
\begin{eqnarray*}
|\psi_{l}^{I}(\tilde{f})(z)|&\leq&|P_{I1}(\tilde{f})(z)|^{i_{1 k}} \cdots|P_{In}(\tilde{f})(z)|^{i_{n k}}|h_{ l}(\tilde{f})(z)|\\
\nonumber&\leq& c_{l}|P_{I1}(\tilde{f})(z)|^{i_{1 k}} \cdots|P_{In}(\tilde{f})(z)|^{i_{n k}}\|\tilde{f}(z)\|^{L-d\sigma(\mathbf{i})_{k}}\\
\nonumber&=&c_{l}\left(\frac{|P_{I1}(\tilde{f})(z)|^{i_{1 k}}}{\|\tilde{f}(z)\|^{d}}\right)^{i_{1 k}} \cdots\left(\frac{|P_{I n}(\tilde{f})(z)|}{\|\tilde{f}(z)\|^{d}}\right)^{i_{n k}}\|\tilde{f}(z)\|^{L}
\end{eqnarray*} where $c_{l} \in \mathcal{C}_{f},$ which does not depend on $f$ and $z.$ Taking the product on both sides of the above inequalities over all $l$ and then taking logarithms, we obtain
\begin{eqnarray}\label{E18}
\nonumber\log \prod_{l=1}^{u}|\psi_{l}^{I}(\tilde{f})(z)| &\leq& \sum_{k=1}^{K} m_{k}\left(i_{1 k} \log \frac{|P_{I 1}(\tilde{f})(z)|}{\|\tilde{f}(z)\|^{d}}+\cdots+ i_{n k} \log \frac{|P_{I n}(\tilde{f})(z)|}{\|\tilde{f}(z)\|^{d}}\right)\\&&+u L \log \|\tilde{f}(z)\|+\log c_{I}.
\end{eqnarray}
where $c_{I}=\prod_{l=1}^{u} c_{ l} \in \mathcal{C}_{f}.$ By (\ref{E18}), it gives
\begin{equation*}
\log \prod_{l=1}^{u}|\psi_{l}^{I}(\tilde{f})(z)| \leq a\left(\log \prod_{i=1}^{n} \frac{|P_{I i}(\tilde{f})(z)|}{\|\tilde{f}(z)\|^{d}}\right)+u L \log \|\tilde{f}(z)\|+\log c_{I},
\end{equation*}
i.e.,
\begin{equation}\label{E19}
a\left(\log \prod_{i=1}^{n} \frac{\|\tilde{f}(z)\|^{d}}{|P_{Ii}(\tilde{f})(z)|}\right) \leq \log \prod_{l=1}^{u} \frac{\|\tilde{f}(z)\|^{L}}{|\psi_{l}^{I}(\tilde{f})(z)|}+\log c_{I}
\end{equation}
set $c_{0}=h^*\prod_{I}(1+c_{I}^{(1+\frac{N-n}{\max\{1,\min\{N-n,\kappa\}\}}) / a}) \in \mathcal{C}_{f}.$\par

Combining \eqref{E19} with \eqref{E2.5}, we obtain that
\begin{equation}\label{E15}
\log \prod_{i=1}^{q} \frac{\|\tilde{f}(z)\|^{d}}{|Q_{i}(\tilde{f})(z)|} \leq \frac{1+\frac{N-n}{\max\{1,\min\{N-n,\kappa\}\}}}{a} \log \prod_{l=1}^{u} \frac{\|\tilde{f}(z)\|^{L}}{|\psi_{l}^{I}(\tilde{f})(z)|}+\log c_{0}.
\end{equation}

We now write $$\psi_{l}^{I}=\sum_{J\in\mathcal{I}_{L}} c_{lJ}^{I} x_{J} \in V_{L}, \quad c_{lJ}^{I} \in \mathcal{K}_{\{Q_{i}\}},$$
where $\mathcal{I}_{L}$ is the set of all $(n+1)$-tuples $J=(i_{0},\ldots, i_{n})$ with $\Sigma^{n}_{s=0}j_{s}=L, x^{J}=x^{j_{0}}_{0}\cdots x^{j_{n}}_{n}$ and $l\in\{1,\ldots, u\}.$ For each $l,$ we fix an index $J^{I}_{l}\in J$ such that $c^{I}_{lJ^{I}_{l}}\not\equiv0.$ Define $$\mu_{lJ}^{I}=\frac{c_{lJ}^{I}}{c_{lJ^{I}_{l}}^{I}}, J\in \mathcal{I}_{L}.$$

Set $\Phi =\{\mu^{I}_{lJ}; I\subset\{1, \ldots, q\}, \sharp I=n, 1 \leq l\leq u, J\in\mathcal{I}_{L}\}.$ Note that $1 \in\Phi.$ Let $B=\sharp\Phi.$ We see that $B\leq u{q \choose n}({L+n \choose n}-1)={q \choose n}({L+n \choose n}-1){L+n \choose n}.$ For each positive integer $l,$ we denote by $\mathcal{L}(\Phi(l))$ the linear span over $\mathbb{C}$ of the set
$$\Phi(l)=\{\gamma_{1}\cdots\gamma_{l};\gamma_{i}\in\Phi\}.$$ It is easy to see that $$\dim\mathcal{L}(\Phi(l))\leq\sharp\Phi(l)\leq{B+l-1 \choose B-1}.$$

We may choose a positive integer $p$ such that
$$p\leq p_{0}:=\left[\frac{B-1}{\log(1+\frac{\epsilon}{3(n+1)(1+\frac{N-n}{\max\{1,\min\{N-n,\kappa\}\}})})}\right]^{2},$$ and  $$\frac{\dim\mathcal{L}(\Phi(p+1))}{\dim\mathcal{L}(\Phi(p))}\leq 1+\frac{\epsilon}{3(n+1)(1+\frac{N-n}{\max\{1,\min\{N-n,\kappa\}\}})}.$$
Indeed, if $\frac{\dim\mathcal{L}(\Phi(p+1))}{\dim\mathcal{L}(\Phi(p))}> 1+\frac{\epsilon}{3(n+1)(1+\frac{N-n}{\max\{1,\min\{N-n,\kappa\}\}})}$ for all $p\leq p_{0},$ we have
$$\dim\mathcal{L}(\Phi(p_{0}+1))\geq (1+\frac{\epsilon}{3(n+1)(1+\frac{N-n}{\max\{1,\min\{N-n,\kappa\}\}})})^{p_{0}}.$$
Therefore, we have
\begin{eqnarray*}
&&\log(1+\frac{\epsilon}{3(n+1)(1+\frac{N-n}{\max\{1,\min\{N-n,\kappa\}\}})})\\&\leq&\frac{\log\dim\mathcal{L}(\Phi(p_{0}+1))}{p_0}\leq\frac{\log{B+p_0 \choose B-1}}{p_0}\\
&=&\frac{1}{p_0}\log\prod^{B-1}_{i=1}\frac{p_0+i+1}{i}<\frac{(B-1)\log(p_0+2)}{p_0}\\
&\leq&\frac{B-1}{\sqrt{p_0}}\leq\frac{(B-1)\log(1+\frac{\epsilon}{3(n+1)(1+\frac{N-n}{\max\{1,\min\{N-n,\kappa\}\}})})}{B-1}\\
&=&\log(1+\frac{\epsilon}{3(n+1)(1+\frac{N-n}{\max\{1,\min\{N-n,\kappa\}\}})}).
\end{eqnarray*}This is a contradiction.\par

We fix a positive integer $p$ satisfying the above condition. Put $s =\dim \mathcal{L}(\Phi(p))$ and $t =\dim \mathcal{L}(\Phi(p +1)).$ Let ${b_{1}, \ldots, b_{t}}$ be an $\mathbb{C}$-basis of $\mathcal{L}(\Phi(p +1))$ such that ${b_{1}, \ldots, b_{s}}$ be a $\mathbb{C}$-basis of $\mathcal{L}(\Phi(p)).$\par

For each $l\in {1, \ldots, u},$ we set
$$\tilde{\psi}_{l}^{I}=\sum_{J\in\mathcal{I}_{L}} \mu_{lJ}^{I} x_{I} .$$
For each $J\in \mathcal{I}_{L},$ we consider homogeneous polynomials $\phi_{J}(x_{0}, \ldots, x_{n}) =x^{J}.$ Let $F$ be a meromorphic mapping of $\mathbb{C}^{m}$ into $\mathbb{P}^{tu-1}(\mathbb{C})$ with a reduced representation $\tilde{F}=(hb_{i}\phi_{J}(\tilde{f}))_{1\leq i\leq t,J\in \mathcal{I}_{L}},$ where $h$ is a nonzero meromorphic function on $\mathbb{C}^{m}.$ We see that $$\parallel N_{h}(r)+N_{1/h}(r)=o(T_{f}(r)).$$
Since $f$ is assumed to be algebraically nondegenerate over $\mathcal{K}_{\mathcal{Q}},$ $F$ is linearly nondegenerate over $\mathbb{C}.$ We see that there exist nonzero functions $c_{1}, c_{2}\in\mathcal{C}_{f}$ such that $$c_{1}|h|\cdot\|\tilde{f}\|^{L}\leq \|\tilde{F}\| \leq c_{2}|h|\cdot\|\tilde{f}\|^{L}.$$
For each $l\in{1, \ldots, u}, 1 \leq i \leq s,$ we consider the linear form $L^{I}_{il}$ in $x^{J}$ such that $$hb_{i}\tilde{\psi}^{I}_{l}(\tilde{f})=L^{I}_{il}(\tilde{F}).$$
Since $f$ is algebraically nondegenerate over $\mathcal{K}_{\mathcal{Q}},$ it is easy to see that $\{b_{i}\tilde{\psi}^{I}_{l}(\tilde{f}); 1 \leq i \leq s, 1 \leq l\leq M\}$ is linearly independent over $\mathbb{C},$ and so is $\{L^{I}_{il}(\tilde{F}); 1 \leq i \leq s, 1 \leq l\leq u\}.$ This yields that $\{L^{I}_{il}; 1 \leq i \leq s, 1 \leq l\leq u\}$ is linearly independent over $\mathbb{C}.$\par

For every point $z$ which is not neither zero nor pole of any $hb_{i}\psi^{I}_{l}(\tilde{f}),$ we also see that
\begin{eqnarray*}
s\log \prod_{i=1}^{u} \frac{\|\tilde{f}(z)\|^{L}}{|\psi^{I}_{l}(\tilde{f})(z)|} &\leq& \log \prod_{
\small\begin{split} 1\leq l\leq u\\ 1\leq i\leq s \small\end{split}} \frac{\|\tilde{F}(z)\|}{|hb_{i}\psi_{l}^{I}(\tilde{f})(z)|}+\log c_{3}\\&=&\log \prod_{\small\begin{split} 1\leq l\leq u\\ 1\leq i\leq s \small\end{split}} \frac{\|\tilde{F}(z)\|\cdot\|L^{I}_{il}\|}{|L^{I}_{il}(\tilde{F})(z)|}+\log c_{4},
\end{eqnarray*}
where $c_{3}, c_{4}$ are nonzero functions in $\mathcal{C}_{f},$ not depend on $f$ and $I$, but on $\{Q_{i}\}^{q}_{i=1}.$ Combining this inequality and (\ref{E15}), we obtain that

\begin{eqnarray}\label{E20}
&&\log \prod_{i=1}^{q} \frac{\|\tilde{f}(z)\|^{d}}{|Q_{i}(\tilde{f})(z)|}\leq\frac{1+\frac{N-n}{\max\{1,\min\{N-n,\kappa\}\}}}{sa}\\\nonumber &&\cdot\left(\max_{I}\log \prod_{\small\begin{split} 1\leq l\leq u\\ 1\leq i\leq s \small\end{split}} \frac{\|\tilde{F}(z)\|\cdot\|L^{I}_{il}\|}{|L^{I}_{il}(\tilde{F})(z)|}+\log c_{4}\right)+\log c_0.
\end{eqnarray}for all $z$ outside an analytic subset of $\mathbb{C}^{m}.$

Since $\tilde{F}$ is linearly nondegenerate over $\mathbb{C},$ according to Proposition \ref{P1.1}, there exists an admissible set $\alpha=(\alpha_{iJ})_{1\leq i\leq t,J\in \mathcal{I}_{L}}$ with $\alpha_{iJ}\in \mathbf{Z}^{m}_{+},\alpha_{iJ} \leq tu -1,$ such that
$$W^{\alpha}(hb_{i} \tilde{\phi}_{J}(\tilde{f}))=\det(\mathcal{D}^{\alpha_{i'j'}}(hb_{i} \tilde{\phi}_{J}(\tilde{f})))\not\equiv0.$$ By Theorem \ref{TB}, we have
\begin{eqnarray}\label{E21}
&&\parallel\int_{S(r)}\max_{I}\left\{\log \prod_{\small\begin{split} 1\leq l\leq u\\ 1\leq i\leq s \small\end{split}} \frac{\|\tilde{F}(z)\|\cdot\|L^{I}_{il}\|}{|L^{I}_{il}(\tilde{F})(z)|}\right\}\\\nonumber &\leq& tuT_{F}(r)-N_{W^{\alpha}(hb_{i} \tilde{\phi}_{J}(\tilde{f}))}(r)+o(T_{F}(r)).
\end{eqnarray}

Integrating both sides of (\ref{E20})and using (\ref{E21}), we obtain that
\begin{eqnarray}\label{E22}
&&qdT_{f}(r)-\sum^{q}_{i=1}N(r,f^{*}Q_i)\leq \frac{tu(1+\frac{N-n}{\max\{1,\min\{N-n,\kappa\}\}})}{sa}T_{F}(r)\\\nonumber &&-\frac{1+\frac{N-n}{\max\{1,\min\{N-n,\kappa\}\}}}{sa}N_{W^{\alpha}(hb_{i} \tilde{\phi}_{J}(\tilde{f}))}(r)+o(T_{F}(r)+T_{f}(r)).
\end{eqnarray}
We can estimate the following quantity where by using the method of S.D.Quang (to see \cite{Q1}), $$\sum^{q}_{i=1}N(r,f^{*}Q_i)-\frac{1+\frac{N-n}{\max\{1,\min\{N-n,\kappa\}\}}}{sa}N_{W^{\alpha}(hb_{i} \tilde{\phi}_{J}(\tilde{f}))}(r),$$ thus we can get
$$\sum^{q}_{i=1}N(r,f^{*}Q_i)-\frac{1+\frac{N-n}{\max\{1,\min\{N-n,\kappa\}\}}}{sa}N_{W^{\alpha}(hb_{i} \tilde{\phi}_{J}(\tilde{f}))}(r)\leq\sum^{q}_{i=1}N^{[tu-1]}(r,f^{*}Q_i).$$

From this inequality and (\ref{E22}) with a note that $T_{F}(r) =LT_{f}(r) +o(T_{f}(r)),$ we have
\begin{eqnarray}\label{E23}
&&(q-\frac{tuL(1+\frac{N-n}{\max\{1,\min\{N-n,\kappa\}\}})}{dsa})T_{f}(r)\\\nonumber&\leq&\sum^{q}_{i=1}\frac{1}{d}N^{[tu-1]}(r,f^{*}Q_i)+o(T_{f}(r)).
\end{eqnarray}

Now we give some estimates for $A, t \text{and} s.$ For each $I_{k}=(i_{1k}, \ldots, i_{nk})$ with $\sigma(\mathbf{i}_{k}) \leq \frac{L}{d}-n,$ we set
$$i_{(n+1)k} = \frac{L}{d}-n -\sum^{n}_{s=1}i_{s}.$$
Since the number of nonnegative integer $p$-tuples with summation $\leq I$is equal to the number of nonnegative integer $(p +1)$-tuples with summation exactly equal to $I\in \mathbf{Z},$ which is $I+n \choose n$, and since the sum below is independent of $s,$ we have
\begin{eqnarray*}
a&=&\sum_{\sigma(\mathbf{i}_{k})\leq\frac{L}{d}}m^{I}_{k}i_{sk}\\
&\leq&\sum_{\sigma(\mathbf{i}_{k})\leq\frac{L}{d}-n}m^{I}_{k}i_{sk}\\
&=&\frac{d^{n}}{n+1}\sum_{\sigma(\mathbf{i}_{k})\leq\frac{L}{d}-n}\sum_{s=1}^{n+1}i_{sk}\\
&=& \frac{d^n}{n+1}{\frac{L}{d} \choose n}(\frac{L}{d}-n)\\
&=&d^{n}{\frac{L}{d} \choose n+1}.
\end{eqnarray*}\par

Now, for every positive number $x \in[0, \frac{1}{(n+1)^{2}}],$ we have
\begin{eqnarray}\label{E24}
(1+x)^{n}&=&1+nx+\sum_{i=2}^{n}{n \choose i}x^{i}\\\nonumber&\leq& 1+nx+\sum_{i=2}^{n}\frac{n^{i}}{i!(n+1)^{2i-2}}x \\\nonumber&\leq& 1+nx+\sum_{i=2}^{n}\frac{1}{i!}x\\\nonumber&\leq& 1+(n+1)x.
\end{eqnarray}
We chose $L =(n +1)d +2(1+\max\{1,\frac{N-n}{\kappa}\})(n +1)^{3}I(\epsilon^{-1})d.$ Then $L$ is divisible by $d$ and we have
\begin{eqnarray}\label{E25}
&&\frac{(n +1)d}{L-(n +1)d}=\frac{(n +1)d}{2(1+\frac{N-n}{\max\{1,\min\{N-n,\kappa\}\}})(n +1)^{3}I(\epsilon^{-1})d}\leq\frac{1}{2(n +1)^{2}}.
\end{eqnarray}

Therefore, using (\ref{E24})and (\ref{E25})we have
\begin{eqnarray*}
&&\frac{uL}{da}\leq\frac{{L+n \choose n}L}{d^{n+1}{\frac{L}{d} \choose n+1}}=\frac{L\cdot(L+1)\cdots(L+n)}{1\cdot2\cdots n}/\frac{(L-nd)\cdot(L-(n-1))\cdots L}{1\cdot2\cdots (n+1)}\\
&&=(n+1)\prod^{n}_{i=1}\frac{L+i}{(L-(n-i+1)d)}<(n+1)(\frac{L}{(L-(n+1)d)})^{n}\\
&&=(n+1)(1+\frac{(n+1)d}{(L-(n+1)d)})^{n} \\
&&<(n+1)(1+\frac{(n+1)^{2}d}{2(1+\frac{N-n}{\max\{1,\min\{N-n,\kappa\}\}})(n +1)^{3}I(\epsilon^{-1})d})\\
&&\leq(n+1)+\frac{(n+1)^{3}d}{2(1+\frac{N-n}{\max\{1,\min\{N-n,\kappa\}\}})(n +1)^{3}\epsilon^{-1}}\\
&&\leq n+1+\frac{\epsilon}{2(1+\max\{1,\frac{N-n}{\kappa}\})}.
\end{eqnarray*}\par

Then we have
\begin{eqnarray}\label{E28}
\frac{tuL}{das}&\leq&(1+\frac{\epsilon}{3(n+1)(1+\frac{N-n}{\max\{1,\min\{N-n,\kappa\}\}})})\\\nonumber
&&\cdot(n+1+\frac{\epsilon}{2(1+\frac{N-n}{\max\{1,\min\{N-n,\kappa\}\}})})\\\nonumber
&\leq& n+1+\frac{\epsilon}{2(1+\frac{N-n}{\max\{1,\min\{N-n,\kappa\}\}})}\\\nonumber
&&+\frac{\epsilon}{3(1+\frac{N-n}{\max\{1,\min\{N-n,\kappa\}\}})}+\frac{\epsilon}{6(1+\frac{N-n}{\max\{1,\min\{N-n,\kappa\}\}})}\\\nonumber
&=&n+1+\frac{\epsilon}{1+\frac{N-n}{\max\{1,\min\{N-n,\kappa\}\}}}.
\end{eqnarray}\par

Combining (\ref{E23})and (\ref{E28}), we get
\begin{eqnarray}\label{E26}
&&(q-(1+\frac{N-n}{\max\{1,\min\{N-n,\kappa\}\}})(n+1)-\epsilon)T_{f}(r)\\\nonumber&&\leq\sum^{q}_{i=1}\frac{1}{d}N^{[tu-1]}(r,f^{*}Q_i)+o(T_{f}(r)).
\end{eqnarray}
Here we note that:\par
$$L:=(n +1)d +2(1+\frac{N-n}{\max\{1,\min\{N-n,\kappa\}\}})(n +1)^{3}I(\epsilon^{-1})d,$$\par
\begin{eqnarray*}
p_{0}&:=&\left[\frac{B-1}{\log(1+\frac{\epsilon}{3(n+1)(1+\frac{N-n}{\max\{1,\min\{N-n,\kappa\}\}})})}\right]^{2}\\&\leq&\left[\frac{{L+n \choose n}({L+n \choose n}-1){q \choose n}-1}{\log(1+\frac{\epsilon}{3(n+1)(1+\frac{N-n}{\max\{1,\min\{N-n,\kappa\}\}})})}\right]^{2},
\end{eqnarray*}
\begin{eqnarray*}
tu-1&\leq&{L+n \choose n}{B+p \choose B-1}-1\leq{L+n \choose n}p^{B-1}-1\\ &\leq&{L+n \choose n}p_{0}^{{L+n \choose n}{L+n \choose n}-1){q \choose n}-2}-1=L_{0}.
\end{eqnarray*}

By these estimates and by (\ref{E26}), we obtain
\begin{eqnarray}\label{E27}
&&(q-(1+\frac{N-n}{\max\{1,\min\{N-n,\kappa\}\}})(n+1)-\epsilon)T_{f}(r)\\\nonumber&&\leq\sum^{q}_{i=1}\frac{1}{d}N^{[L_0]}(r,f^{*}Q_i)+o(T_{f}(r)).
\end{eqnarray}
The theorem is proved.\vskip 6pt

\noindent{\bf Acknowledgement.} The authors are very grateful to Prof. Qiming Yan for pointing out a hard to find but serious gap in the original version (arXiv:1908.05844v1) and giving many valuable suggestions, and would also thank to Dr. Nguyen Van Thin for giving some comments to the first version (arXiv:1908.05844v1).

\end{document}